\documentclass[11pt, reqno]{amsart}
\usepackage{a4wide}
\usepackage{hyperref}
\usepackage{amsmath}
\usepackage{paralist}
\usepackage{amssymb}
\usepackage{amsthm}
\usepackage{amscd}
\usepackage{graphicx,mathrsfs}
\usepackage{fontenc}
\usepackage{enumitem}
\renewcommand{\subjclassname}{2020 Mathematics Subject Classification}
\begin{document}
\title[Cone degenerate Laplace equation]
{Existence results for nonlinear cone degenerate Laplace equations}
\author[ 
Hua Chen, Xiaochun Liu, Yawei Wei,  Mengnan Zhang]
{
Hua Chen, Xiaochun Liu, Yawei Wei,    Mengnan Zhang}

%
%
\address{Hua Chen  \newline
School of Mathematics and Statistics\\ Wuhan University\\ Wuhan 430072, China}
\email{chenhua@whu.edu.cn}

\address{Xiaochun Liu \newline
School of Mathematics and Statistics\\  Wuhan University\\ Wuhan 430072, China}
\email{xcliu@whu.edu.cn}

\address{Yawei Wei \newline
School of Mathematical Sciences and LPMC\\ Nankai University\\ Tianjin 300071, China}
\email{weiyawei@nankai.edu.cn}


\address{Mengnan Zhang \newline
School of Mathematical Sciences \\ Nankai University\\ Tianjin 300071, China}
\email{1120220030@mail.nankai.edu.cn}

\thanks{Acknowledgements: This work is supported by the NSFC under the grants 12271269, 12131017, 12221001, and  the Fundamental Research Funds for the Central Universities.}

\subjclass[2020]{35A01,  35D40, 35J60, 35J70}

\keywords{Conical singularity; Degenerate  elliptic equations; Viscosity solution;  Alexandrov-Bakelman-Pucci estimate; H\"older estimate; Weak solution.
}

\begin{abstract}
This paper concerns a class of non-divergence  nonlinear elliptic equations driven by the cone degenerate Laplacian, which is motivated by cone calculus. We establish the existence of
viscosity solutions by proving the Alexandrov-Bakelman-Pucci and H\"older estimates.  Furthermore, we obtain the existence of weak solutions by  proving  the  equivalence between  weak solutions and viscosity solutions.
\end{abstract}

\maketitle
\numberwithin{equation}{section}
\newtheorem{Theorem}{Theorem}[section]
\newtheorem{Remark}{Remark}[section]
\newtheorem{lemma}{Lemma}[section]
 \newtheorem{corollary}{Corollary}[section]
\newtheorem{assumption}{Assumption}[section]
\newtheorem{definition}{Definition}[section]
\newtheorem{Proposition}{Proposition}[section]
\newcommand{\R}{\mathbb{R}}
\newcommand{\B}{\mathbf{B}}
\newcommand{\e}{{\mathcal {E}}}
\newcommand{\M}{{\mathcal{H}}}
\newcommand{\Y}{{\mathbf{P}}}

\renewcommand{\subjclassname}
\allowdisplaybreaks

\section{Introduction}

In this paper, we study the non-divergence  nonlinear cone degenerate Laplace equation:%
\begin{equation}\label{U:eq:11}
t^{-2}\triangle_{\mathbb{B}}u
     +t^{-2}(n-2)(t\partial_t u)+h(u)=f(t,x) \ \ \ \ \text{for}\ (t,x) \in \mathbb{B},
\end{equation}
and its Dirichlet problem
\begin{equation}\label{U:H8}
    \begin{cases}
F((t,x),\nabla_\mathbb{B}u,\nabla_\mathbb{B}^2u)+h(u)=f(t,x), \ \ \ & (t,x)\in \mathbb B,\\
u=0,  \ & (t,x)\in \partial{\mathbb B},
\end{cases}
\end{equation}
for  $n\geq 3$. Here
\begin{equation}\label{U:eq:76}
\begin{aligned}
 F((t,x),\nabla_\mathbb{B}u,\nabla_\mathbb{B}^2u)
 :=t^{-2}\triangle_{\mathbb{B}}u
     +t^{-2}(n-2)(t\partial_t u),
 \end{aligned}\end{equation}
The function $f(t,x)$ is continuous in $\mathbb{B}$,  the domain $\mathbb{B}= (0,1)\times X $ is the stretched  conical domain,
  and $X$ is a bounded convex  open set in $\mathbb{R}^{n-1}$ with smooth boundary.  The function  $h: \mathbb{R}\to  \mathbb{R} $ is  continuous, non-increasing, and $h(0)=0$. For $t>0$ and $x\in X$,
 we denote
\begin{equation}\label{U:1.2}
\nabla_\mathbb{B}:=(t \partial_t,\partial_{ x_1},...,\partial _{x_{n-1}})
 \ \text{and} \ \triangle_{\mathbb{B}}:=\text{div}_\mathbb{B}=\nabla_\mathbb{B}\cdot.\end{equation}


The motivation of   equation \eqref{U:eq:11} comes from the calculus on manifolds with conical singularities, see \cite{ESJS,ES,BWS}.  
 A finite dimensional manifold $B$ with conical singularities is a topological space with a finite subset $$\begin{aligned}B_0= \left\{b_1, \dots, b_M\right\} \subset B
 \end{aligned}$$
 of conical singularities, with the  following  two properties:

1. $B \backslash B_0$ is a $C^{\infty}$ manifold;

2. Every $b \in B_0$ has an open neighhourhood $U$ in $B$, such that there is a homeomorphism
$$
\varphi: U \rightarrow X^{\Delta},
$$
and $\varphi$ restricts a diffeomorphism
$$
\varphi^{\prime}: U \backslash\{b\} \rightarrow X^{\wedge} .
$$
Here $X$ is  a bounded open subset in $\mathbb{R}^n$ with smooth boundary, and we set
$$
X^{\Delta}=\overline{\mathbb{R}}_{+} \times X /(\{0\} \times X) .
$$
This local model is interpreted as a cone with the base $X$. Since the analysis is formulated off the singularity, it makes sense to pass to
$$
X^{\wedge}=\mathbb{R}_{+} \times X,
$$
the open stretched cone with the base $X$. In \eqref{U:eq:11} we take the simplest case of finite stretched cone such that
$$
\mathbb{B}=(0,1) \times X \text { and }  \partial\mathbb{B}=(\{1\}\times X)\cup ((0,1]\times (\partial X)).
$$

The Riemannian metric on $X^{\wedge}=\mathbb{R}_{+} \times X$ is endowed with
$$
g:=d t^2+t^2 d x^2.
$$
Then we have
$$
g=\left(g_{i j}\right)=\left(\begin{array}{cccc}
1 & & & \\
& t^2 & & \\
& & \ldots & \\
& & & t^2
\end{array}\right)\quad  \text{and}  \quad\left(g^{i j}\right)= \left(g_{i j}\right)^{-1} =\left(\begin{array}{cccc}
1 & & & \\
& t^{-2} & & \\
& & \ldots & \\
& & &  t^{-2}
\end{array}\right)
$$
 with $G=\sqrt{\left|\operatorname{det}\left(g_{i j}\right)\right|}=t^{n-1}$, where the rest part is $0$. Then the gradient with contra-variant components is as  follows:
$$
\nabla_g u=\left(g^{j 1}\frac{\partial u}{\partial t}+\sum_{k=2}^{n} g^{j k} \frac{\partial u}{\partial x_{k-1}}\right)_{j=1, \ldots, n}=\left(\frac{\partial}{\partial t},t^{-2} \frac{\partial}{\partial {x_1}}, \ldots, t^{-2} \frac{\partial}{\partial{ x_{n-1}}}\right)u.
$$For convenience of notation, we set $$(y_1, y_2, y_3 \cdots, y_n)= (t,x_1, x_2, \cdots, x_{n-1}) .$$
Now we calculate the Laplace operator near to the conical singularity as follows:
$$\begin{aligned}
\Delta_{ g}u&= \operatorname{div}_g\left(\nabla_g u\right)
\\&=   G^{-1} \left( \sum_{j=1}^{n} \frac{\partial}{\partial y_{j}}\right)\left(G\left( \sum_{k=1}^{n} g^{j k} \frac{\partial u}{\partial y_{k}}\right)\right)\\&=:  t^{-2} \operatorname{div}_{\mathbb{B}}\left( \nabla_{\mathbb{B}} u\right)+t^{-2}(n-2)\left(t\partial _t\right)u,\end{aligned}
$$
 which appears at Example 4 of Chapter 7 in \cite{YVE}.

 Viscosity solutions were introduced by M. G. Crandall and P. L. Lions in \cite{MP} for the case of Hamilton-Jacobi equations and extended to the case of second-order elliptic equations by the authors  in \cite{LMMA} and  \cite{HP2}.

The  Alexandrov-Bakelman-Pucci (ABP) estimate is an important part of the regularity research,  which is commonly used to study the Harnack inequality and H\"older  estimate.  The work in \cite{LX,LMMA} obtained the ABP estimate for fully nonlinear elliptic equations on a bounded domain.  Later,  the  authors in \cite{CI,GPF} extended  the above result to  a class of fully nonlinear equations modeled on the $p$-Laplacian with $p>1$.
 X. Cabr\'{e} in \cite{XC} imposed condition
 $\text{(G)}$ on the domain to ensure  that the ABP estimate is valid in an unbounded domain, which is,

  $\text{(G)}$:
 there exist positive real numbers $\sigma,\tau \in (0,1)$ and   $R_0$ such that for each $y\in \Omega$ there is a ball $B(r_y)$ and $r_y\leq R_0$ satisfying
 $$ y\in B(r_y) \ \text{and}\  |B(r_y)\setminus \Omega_{y,\tau}|\geq \sigma |B(r_y)|,$$
 where $B(r_y)$ is  a ball with radius $r_y$, $\Omega_{y,\tau}$ is the connected component of $\Omega\cap B(r_y/\tau)$  containing $y$ and $|\cdot|$ denotes the $n$-dimensional Lebesgue measure. Later,  the authors  in \cite{VCAV}  introduced a weaker condition  $\text{(wG)}$ form of $\text{(G)}$.

 $\text{(wG)}$: the $r_y$ in  $\text{(G)}$  is not required to be uniformly bounded above,  i.e., $R_0=+\infty$.  \\
  Related work can also be found in \cite{AV3,MLA,IFA}.
   Based on those studies,  the  authors in \cite{IIV} proved  the ABP estimate and further obtained the  H\"older estimate under a slightly stronger condition $(\text{G}^d)$ of  $\text{(G)}$ in which  there are real numbers $a_1>1$ and $a_2>0$ such that $r_y\leq a_1 d(y)$ and $d(y)\leq a_2$. Here $d(y)$ is the distance from point $y$ to $\partial \Omega$.  Finally, we mention that the  ABP estimate has been recently studied in \cite{SKAS,AV,PG} and the references therein.

The existence and uniqueness of viscosity solutions have been widely studied.   The  authors in \cite{G4,MP17,HI20}  studied  the existence of  viscosity solutions by vanishing viscosity method and approximation arguments.  Later,   H.Ishii  in \cite{HI30} made improvements  by  applying Perron's method, which gives the existence and uniqueness by constructing a viscosity supersolution and a  viscosity  subsolution and combining the  comparison principle.  Afterwards,  Perron's  method has been  widely applied in \cite{IF22,IF2,MHP}.  It is worth noting that the construction of viscosity supersolutions and subsolutions usually requires that the domains  have a smooth boundary.  For some domains without a smooth boundary,  the work in \cite{GPA,IF3} established  the existence results by domain approximation and function convergence.
Recently,  many other articles have investigated  the existence of viscosity solutions, such as \cite{JTRBV,AV2,JZ}.


The existence of weak solutions is also an important topic in the field of partial differential  equations on singular manifolds.  For instance,  via  variational    method,  there are a series of existence results  for   different  singular manifolds  such as in  \cite{HXW2,HXW,HTW,HW3,WYW}.  In particular,  for a class of nonlinear elliptic equation on manifolds with conical singularities, the authors in  \cite{HXW2}  established  the corresponding Sobolev inequality and
Poincar\'e inequality on the cone Sobolev spaces, and then, as an application of such inequalities,  they proved  the existence of non-trivial weak solution for Dirichlet boundary value problem.  On the other hand,  the equivalence  between weak solutions and viscosity solutions is also an interesting topic
(see \cite{VP,MO,MMPO}), which provides an idea to study the existence of weak solutions.  More details can be also found in \cite{BBMM,JTE,JS2} and references therein.


Our main work is  as follows. Firstly, we research   the existence of
 viscosity solutions to the problem $\eqref{U:H8}$ by exploring ABP estimate of  viscosity solutions for \eqref{U:eq:11} and  weighted H\"older estimate  for $\eqref{U:H8}$. Secondly, we prove the  the equivalence of weak solutions and viscosity solutions, which implies that the existence of weak solutions to \eqref{U:H8}.

Now, we will present the  notations and  assumptions used in this paper.
\begin{itemize}[leftmargin=*]\label{U:notation}
\item[]$\bullet\ S^n$: the space of $n\times n$ real symmetric matrices.
\item[] $\bullet \ USC$:  upper  semicontinuous.
\item[]$\bullet\  LSC$:  lower  semicontinuous.
\item[]$\bullet\ B((t,x),r)$: the Euclid ball with $(t,x)$ as the center and $r$ as the radius.
\item[]$\bullet\ B_{\mathbb{B}}((t,x),r)$: the cone ``ball'' with $(t,x)$ as the center and $r$ as the radius defined in \eqref{U:eq:73}.
 \end{itemize}

%
%
%
%

  When we don't emphasize $(t,x)$, we simply write $B((t,x),r)$ and  $B_{\mathbb{B}}((t,x),r)$ as $B(r)$ and $B_{\mathbb{B}}(r)$ respectively, for abbreviation.
 \begin{definition}[\cite{CLT}, Distance on cone]\label{U:cone 9}
Since the metric on cone is $ds^2=\frac{1}{t^2}(
dt)^2+\Sigma_{i=1}^{n-1}(dx_i)^2$, we obtain that the distance between points $(t,x)=(t,x_1,\cdot\cdot\cdot,x_{n-1})$ and $(t_0,x_0)=(t_0,x_1^0,\cdot\cdot\cdot,x_{n-1}^0)$ on cone is
 \begin{equation}
  d_{\mathbb{B}}((t,x),(t_0,x_0))=\sqrt{(\ln t-\ln t_0)^2+\Sigma_{i=1}^{n-1}(x_i-x_i^0)^2},
  \end{equation}
and we define $|(t,x)-(t_0,x_0)|_{\mathbb{B}}=: d_{\mathbb{B}}((t,x),(t_0,x_0))$. Furthermore, we define  $ d_{\partial\mathbb{B}}{(t,x)}$  as the distance from $(t,x)$ to $\partial \mathbb{B}$, with the specific form as follows:
 \begin{equation}\label{U:eq:85}
 d_{\partial\mathbb{B}}{(t,x)}=\inf\{ d_{\mathbb{B}}\left((t,x),(s,y)\right)\ \big{|}\ \mbox{for any}\ (s,y)\in \partial \mathbb{B}\}.
\end{equation}

For simplicity, we introduce the open ``ball'' in $ \mathbb{R}_{+}^{n} $ in the sense of measure $\frac{dt}{t}dx$,  with center $w=(s,y)=(s,y_1,\cdots,y_{n-1})\in \mathbb{R}_{+}^{n}$ and radius r,  as follows:
\begin{equation}\label{U:eq:73}
B_{\mathbb{B}}((s,y),r):= \left\{ (t,x) \in \mathbb{R}_+^n\ \big{|}\ (\ln t-\ln s)^2+\Sigma_{i=1}^{n-1}(x_i-y_i)^2<r^2\right\}.
 \end{equation}


\end{definition}

\begin{definition}\label{U:def-1}
For the weight data $\gamma \in \R$, $1\leq q<\infty$ and $(t,x)\in \mathbb{B}$, we say that $u(t,x)\in
\mathbb{L}_q^\gamma(\mathbb{B})$ if $u\in\mathcal{D}^\prime(\mathbb{B})$ and
$$\| u\|_{\mathbb{L}_q^\gamma(\mathbb{B})}=\big(\int_{\mathbb{B}}
|{\left(t(1-t)d(x,\partial X)\right)}^{\frac{n}{q}-\gamma}u(t,x)|^q \frac{dt}{t}dx\big)^{\frac{1}{q}}<
+\infty,$$
where $d(x,\partial X)=\inf\{ \vert x-y\vert \  \big{|}\ \mbox{for any}\ y\in \partial X\}.$
\end{definition}

\begin{assumption}\label{U:Assumption1.1}
    We call that the local stretched cone $\mathbb{B}$ satisfies the condition $(G^d_{\mathbb{B}})$ if
there exist  $K_0\geq 1, \ 0<\sigma<1, \ d_0>0$ such that for any $(t,x)\in \mathbb{B}$,  there exists a n-dimensional cone ``ball'' $ B_{\mathbb{B}}({ \tilde {R}_{(t,x)}} )$ of radius $ \tilde{R}_{(t,x)}\leq K_0d_{\partial\mathbb{B}}{(t,x)}$  and $d_{\partial\mathbb{B}}{(t,x)}\leq d_0$, satisfying
$$(t,x)\in B_{\mathbb{B}}({ \tilde {R}_{(t,x)}} )\ \ \text{and} \ \ \ \vert B_{\mathbb{B}}({ \tilde {R}_{(t,x)}} ) \backslash \mathbb{B} \vert_{\mathbb{B}} \geq \sigma \vert B_{\mathbb{B}}({ \tilde {R}_{(t,x)}} )\vert_{\mathbb{B}}. $$

 Here $|\cdot |_{\mathbb{B}}$ denotes   the volume under cone metric, i.e.,
for any $\Omega \subset \mathbb{R}_{+}^{n}$ , the volume of $\Omega$ under cone metric is
 $|\Omega|_{\mathbb{B}}=\int_{\Omega}\frac{dt}{t}dx.$
 \end{assumption}

The above  Assumption goes back to condition $\text{(G)}$ of \cite{XC},  as well as variants  $\text{(wG)}$ and  $(\text{G}^d)$ contained in  \cite{VCAV} and \cite{IIV} respectively.

 \begin{assumption}\label{U:assumption2.2}
 There exists  a sequence of open convex  domains  $\{H_{j}\}$, where each $H_j$ satisfies $(G^d_{H_{j}})$ and has a $C^2$ boundary,  such that
\begin{equation}\label{U:eq:74} H_{j}\Subset H_{j+1}\Subset \mathbb{B} \ \ \ \text{and } \ \ \ \cup_{j}H_{j} =\mathbb{B}.\end{equation}
 \end{assumption}

 Clearly, Assumption \ref{U:Assumption1.1} and  Assumption \ref{U:assumption2.2} hold.

In fact, since $X$ is convex, $\mathbb{B}$ is convex. For all $(t,x)\in \mathbb{B}$, we   select $B_{\mathbb{B}}({ \tilde {R}_{(t,x)}} )$ with   radius $\tilde{R}_{(t,x)}=2d_{\partial\mathbb{B}}{(t,x)}\leq 2 diam(X) $  and center at   $\partial \mathbb{B}$,  then
 $$(t,x)\in B_{\mathbb{B}}({ \tilde {R}_{(t,x)}} )\ \ \text{and} \ \ \ \vert B_{\mathbb{B}}({ \tilde {R}_{(t,x)}} )  \backslash \mathbb{B} \vert_{\mathbb{B}} \geq \frac{1}{2} \vert B_{\mathbb{B}}({ \tilde {R}_{(t,x)}} )\vert_{\mathbb{B}},$$
 which implies  Assumption \ref{U:Assumption1.1} is valid with $K_0=2, \sigma=\frac{1}{2}, d_0=diam(X)$.

Since $\mathbb{B}$ is convex,  there exists a sequence of  open convex domains  $\{H_{j}\}$, where each $H_j$   has a $C^2$ boundary and satisfies \eqref{U:eq:74}.  Furthermore, according to the convexity of  $\{H_{j}\}$,  we have that $\{H_{j}\}$ satisfies $(G^d_{H_{j}})$  with $K_0=2, \sigma=\frac{1}{2}, d_0=diam(X)$. So Assumption \ref{U:assumption2.2}  is also valid.

 Now, we state our main results in the following.

\begin{Theorem}[Alexandrov-Bakelman-Pucci estimate]\label{U:T3}
    Let $v\in USC(\mathbb {\overline{B}})$ be a  viscosity subsolution of \eqref{U:eq:11}, and $\mathop{\sup  }\limits_{\mathbb{{B}}}v ^{+}<+ \infty$. If $t^2f\in C(\overline{\mathbb{B}}) \cap \mathbb{L}^{1}_{n}(\mathbb{B})$,  
     then
      \begin{equation}\label{U:H11}
          \mathop{\sup }\limits_{\mathbb{B}}v^+ \leq\mathop{\sup }\limits_{\partial\mathbb{B}}v^++C^*K_0d_0 {\vert\vert t^2f^-\vert\vert_{ \mathbb{L}^{1}_{n}\left(\mathbb{B} \right)}},
      \end{equation}
      where the constants $K_0$, $d_0$, $\sigma$  come from \textup{Assumption \ref{U:Assumption1.1}}, and  $C^* $ depends on $n, K_0,d_0,\sigma$.
 \end{Theorem}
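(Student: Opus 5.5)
The plan is to pass to logarithmic coordinates, where the cone operator becomes a uniformly elliptic constant-coefficient operator on an unbounded domain, and then run Cabr\'e's ABP argument for domains satisfying condition (G), in the quantitative form $(\text{G}^d)$ of \cite{IIV}. Set $s=\ln t$, so that $t\partial_t=\partial_s$ and $\mathbb{B}$ becomes $\Omega=(-\infty,0)\times X$. Writing $w(s,x)=v(e^s,x)$, the equation \eqref{U:eq:11} multiplied by $t^2$ reads
$$\Delta_{(s,x)} w+(n-2)\partial_s w+t^2h(w)=t^2 f .$$
Here $\Delta_{\mathbb{B}}$ is the Euclidean Laplacian in $(s,x)$ and $t^2>0$, so viscosity subsolutions are preserved under this change of variables. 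The cone distance $d_{\mathbb{B}}$ becomes the Euclidean distance, the cone balls $B_{\mathbb{B}}$ become Euclidean balls, and $\frac{dt}{t}dx=ds\,dx$. Assumption \ref{U:Assumption1.1} therefore becomes exactly condition $(\text{G}^d)$ for $\Omega$, with constants $K_0,\sigma,d_0$.

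We may assume $\sup_\Omega w^+>\sup_{\partial\Omega}w^+$. Since $h$ is non-increasing with $h(0)=0$, we have $h(w)\le 0$ on $\{w>0\}$. Hence, on the positivity set, $w$ is a viscosity subsolution of the Pucci-type inequality
$$\mathcal{M}^+(D^2w)+(n-2)|Dw|\ge -t^2f^- ,$$
which is linear with bounded drift. Next, replace the $\mathbb{L}^1_n$-norm with its weight by the data in these coordinates. The weight in Definition \ref{U:def-1} with $q=1,\gamma=n$ is $(t(1-t)d(x,\partial X))^{0}=1$, so $\|t^2f^-\|_{\mathbb{L}^1_n}=\int_\Omega t^2f^-\,ds\,dx$, an unweighted $L^1$ norm in $(s,x)$. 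I expect this to be the form that is matched against the measure estimate below.

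The core is the iterative Cabr\'e scheme. For $y\in\Omega$, take the ball $B(r_y)$ with $r_y\le K_0 d(y)\le K_0d_0$ given by $(\text{G}^d)$. Extend $(w-\sup_{\partial\Omega}w^+)^+$ by zero outside $\Omega$, which gives a subsolution in $B(r_y/\tau)$. Apply the local ABP/weak Harnack estimate on this ball, available for operators with bounded drift since $r_y\le K_0d_0$ is bounded. The set where the function vanishes occupies a fraction at least $\sigma$ of the ball, and this yields
$$w(y)\le (1-\theta)\sup_{\Omega}w^++C\,r_y\,\|t^2f^-\|_{L^n(\Omega\cap B(r_y/\tau))}$$
with $\theta=\theta(n,\sigma,K_0d_0)\in(0,1)$. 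Taking the supremum over $y$ and absorbing the term $(1-\theta)\sup w^+$ gives $\sup w^+\le \sup_{\partial}w^++C\theta^{-1}K_0d_0\|\cdot\|$. This absorption uses $\sup v^+<\infty$.

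The main obstacle is reconciling norms. The local estimate naturally produces an $L^n$ norm, while the statement has an $L^1$-type norm $\mathbb{L}^1_n$. I would try the linear-equation version: estimate $w$ by a Green-function or ABP argument in which the local contribution is bounded by $C r_y^{2-n}\int t^2f^-$ on the ball. Alternatively, I would interpret the constant $C^*$ as absorbing factors coming from the bounded geometry ($r_y\le K_0d_0$). In either case the dependence $C^*=C^*(n,K_0,d_0,\sigma)$ comes out of $\theta$ and the drift bound $(n-2)K_0d_0$.
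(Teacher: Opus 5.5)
Your main argument is essentially the paper's proof. In the variable $s=\ln t$, cone balls and $\frac{dt}{t}dx$ become Euclidean. The sign condition $h(v)\le 0$ where $v\ge 0$ reduces the problem to a linear inequality with right-hand side $t^2f^-$. You then apply the weak Harnack inequality of Lemma~\ref{U:lemma2.2}, whose drift constant is uniform because all radii are at most $K_0d_0$. It should be applied to $\sup z-z$, where $z$ is your truncated subsolution extended by zero. Together with the measure condition of Assumption~\ref{U:Assumption1.1}, this gives the factor $1-\theta$, which you absorb using $\sup v^+<\infty$. The paper truncates differently, using $u_m=\min\{M-v^+,m\}$ extended by the constant $m$, but this is the same device.

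Your last paragraph, however, rests on a misreading of the norm and should be deleted. In Definition~\ref{U:def-1} the subscript is $q$ and the superscript is $\gamma$, so $\mathbb{L}^1_n$ means $q=n$, $\gamma=1$. The weight exponent is then $\frac{n}{n}-1=0$, and
\[
\|t^2f^-\|_{\mathbb{L}^1_n(\mathbb{B})}=\Big(\int_{\mathbb{B}}|t^2f^-|^n\,\frac{dt}{t}\,dx\Big)^{1/n}=\|t^2f^-\|_{L^n(\Omega)}
\]
in the $(s,x)$ variables. This unweighted $L^n$ norm is exactly what your local estimate $C\,r_y\|t^2f^-\|_{L^n(\Omega\cap B(r_y/\tau))}$ produces, so there is nothing to reconcile.

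Both workarounds you suggested for the imagined $L^1$ target would fail:
\begin{itemize}
\item A bound $\sup w^+\le\sup_{\partial}w^++C\|g\|_{L^1}$ is false for $n\ge 3$. Let $g$ concentrate to a point mass: the solution then behaves like the Green function $|y|^{2-n}$. The same singularity rules out the pointwise bound $C\,r_y^{2-n}\int g$.
\item No constant coming from the bounded geometry can turn an $L^n$ norm into an $L^1$ norm. Hölder gives only the reverse inequality, $\|g\|_{L^1(B)}\le|B|^{1-1/n}\|g\|_{L^n(B)}$.
\end{itemize}
With the correct reading, your argument closes without that paragraph.
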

 \begin{corollary}\label{U:Theorem4.2}
     Let $v\in    C(\mathbb{\overline{B}})$ be a viscosity solution of \eqref{U:eq:11} and $\mathop{\sup }\limits_{\mathbb{{B}}}\vert v \vert < +\infty$. If $t^2f\in C(\overline{\mathbb{B}}) \cap \mathbb{L}^{1}_{n}(\mathbb{B})$, then
     \begin{equation}\label{U:1.7}
         \mathop{\sup }\limits_{\mathbb{B}}\vert v \vert\leq \mathop{\sup }\limits_{\partial \mathbb{B}}\vert v \vert+C^*K_0d_0 {\vert\vert t^2f\vert\vert_{ \mathbb{L}^{1}_{n}\left(\mathbb{B}\right)}},
     \end{equation}
      where the constants $K_0$, $d_0$, $\sigma$  come from \textup{Assumption \ref{U:Assumption1.1}}, and  $C^* $ depends on $n,K_0,d_0,\sigma$.
 \end{corollary}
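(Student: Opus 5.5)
The plan is to apply Theorem \ref{U:T3} twice, once to $v$ and once to $-v$, and combine the two one-sided bounds. Since $v\in C(\overline{\mathbb{B}})$ is a viscosity solution of \eqref{U:eq:11}, it is in particular a viscosity subsolution with $v\in USC(\overline{\mathbb{B}})$. Moreover $\sup_{\mathbb{B}} v^+\le \sup_{\mathbb{B}}|v|<+\infty$. Theorem \ref{U:T3} then gives
\[
\sup_{\mathbb{B}} v^+\le \sup_{\partial\mathbb{B}} v^+ + C^*K_0d_0\|t^2f^-\|_{\mathbb{L}^1_n(\mathbb{B})}.
\]

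For the other side I set $w=-v$. Because $F$ is linear, $w$ solves, in the viscosity sense,
\[
F((t,x),\nabla_\mathbb{B}w,\nabla^2_\mathbb{B}w)+\tilde h(w)=-f, \qquad \tilde h(s):=-h(-s).
\]
The new nonlinearity $\tilde h$ is again continuous and non-increasing, with $\tilde h(0)=0$, so this is an equation of the same class. To check the viscosity claim, take a test function $\varphi$ touching $w$ from above at a point. Then $-\varphi$ touches $v$ from below, and the supersolution property of $v$ gives
\[
F(\cdot,\nabla_\mathbb{B}(-\varphi),\nabla^2_\mathbb{B}(-\varphi))+h(-w)\le f.
\]
Multiplying by $-1$ yields exactly the subsolution inequality for $w$ with right-hand side $-f$.

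The hypotheses carry over: $t^2(-f)\in C(\overline{\mathbb{B}})\cap\mathbb{L}^1_n(\mathbb{B})$ and $(-f)^-=f^+$. The point I must verify is that the proof of Theorem \ref{U:T3} only uses that $h$ is non-increasing with $h(0)=0$ (presumably by working on the set $\{v>0\}$, where $h(v)\le 0$). Granting this, the theorem applied to $w$ gives
\[
\sup_{\mathbb{B}} v^-\le \sup_{\partial\mathbb{B}} v^- + C^*K_0d_0\|t^2f^+\|_{\mathbb{L}^1_n(\mathbb{B})}.
\]

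Finally I combine the two bounds. Since $|v|=\max(v^+,v^-)$, and $\sup_{\partial\mathbb{B}}v^\pm\le \sup_{\partial\mathbb{B}}|v|$, and $f^\pm\le |f|$, the weighted norm is monotone: $\|t^2f^\pm\|_{\mathbb{L}^1_n}\le \|t^2f\|_{\mathbb{L}^1_n}$. Taking the maximum of the two estimates gives \eqref{U:1.7}.

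The only real obstacle is the sign-reversal step. It requires Theorem \ref{U:T3} to hold for the whole class of admissible $h$, not just a fixed one, so that it applies to $\tilde h$. This is consistent with the stated assumptions on $h$.
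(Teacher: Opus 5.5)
Your proposal is correct and is essentially the paper's argument. The paper verifies directly that $M-(-v)^+$ is a viscosity supersolution with right-hand side $f^+$, using $h(v)\ge 0$ where $v\le 0$, and then repeats the proof of Theorem \ref{U:T3}; applying Theorem \ref{U:T3} to $w=-v$ with $\tilde h(s)=-h(-s)$ and $-f$ unpacks to exactly this. Your symmetry packaging is the same device the paper itself uses in Proposition \ref{U:Theorem 1.6} and Theorem \ref{U:Theorem1.9}, and it is legitimate because $\tilde h$ satisfies the standing hypotheses on $h$ and $C^*$ does not depend on $h$.
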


\begin{Theorem} [Weighted H\"older estimate]\label{U:C}
      If $v\in C(\mathbb{\overline{B}})$  is a  viscosity solution of \eqref{U:H8} with $\mathop{\sup }\limits_{\mathbb{{B}}}\vert v \vert < +\infty$, and $t^2f\in C(\overline{\mathbb{B}}) \cap \mathbb{L}^{1}_{n}(\mathbb{B})$,  
     then  there exists a constant $\alpha_1$ depending on $K_0,$ $d_0$, $\sigma$, $n$ such that, for all $\rho \in (0,\alpha_1]$ the following estimate
     \begin{equation}\label{U:T1.10}
        ||v(t,x)||_{\rho,\overline{\mathbb B}}\leq C \left( {\vert\vert t^2f\vert\vert_{ \mathbb{L}^{1}_{n}\left(\mathbb{B}\right)}} +h(-C^*K_0d_0 {\vert\vert t^2f\vert\vert_{ \mathbb{L}^{1}_{n}\left(\mathbb{B}\right)}})\right)\\
\end{equation}
holds,  where the constants $K_0$, $d_0$, $\sigma$  come from \textup{Assumption \ref{U:Assumption1.1}}, $C^*$ comes from \eqref{U:1.7}, and   $C $ depends on $n$,  
      $K_0$,   $d_0$, $\sigma$,  $\rho$.
    Here the H\"older norm is
  \begin{equation}\label{U:eq:55}
    ||v(t,x)||_{\rho,\overline{\mathbb B}}=:||v(t,x)||_{L^{\infty}(\overline{\mathbb B})}+\mathop{\sup}\limits_{\overline{\mathbb{B}}\times \overline{\mathbb{B}}} \frac{|v(t,x)-v(s,y)|}{|(t,x)-(s,y)|_{\mathbb{B}}^{\rho}}.
    \end{equation}
   \end{Theorem}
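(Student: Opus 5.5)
The proof will run in the logarithmic coordinates $s=\ln t$. With $w(s,x)=v(e^s,x)$ we have $t\partial_t=\partial_s$, so $\triangle_{\mathbb B}v=\triangle_{(s,x)}w$ is the Euclidean Laplacian. The cone distance $d_{\mathbb B}$ becomes the Euclidean distance, and the cone measure $\frac{dt}{t}dx$ becomes Lebesgue measure. Multiplying \eqref{U:H8} by $t^2$, the function $w$ is a viscosity solution of
\[
\triangle w+(n-2)\partial_s w=t^2\bigl(f-h(w)\bigr)=:g
\]
on the half-cylinder $(-\infty,0)\times X$. This operator is uniformly elliptic with bounded drift. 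Assumption \ref{U:Assumption1.1} becomes a Euclidean $(\mathrm{G}^d)$ condition of the type used in \cite{IIV}. A weighted Hölder bound in $d_{\mathbb B}$ for $v$ is then exactly a Euclidean $C^{\rho}$ bound for $w$.

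\textbf{Step 1: sup bound and control of the nonlinearity.} Since $h$ is non-increasing with $h(0)=0$, the term $h(v)$ has the favourable sign in the comparison argument. Arguing as in the proof of Theorem \ref{U:T3}, with $\sup_{\partial\mathbb B}|v|=0$, I get $\|v\|_{L^\infty}\le C^*K_0d_0\|t^2f\|_{\mathbb L^1_n}=:M$. Monotonicity of $h$ then gives $|h(v)|\le \max(h(-M),-h(M))$. I expect the statement to use $h(-M)\ge0$ as the governing size, and $-h(M)$ would be handled symmetrically. Thus the right-hand side $g$ is controlled in the weighted norm by $\|t^2f\|_{\mathbb L^1_n}+h(-M)$. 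I will need to check that $\|t^2\|_{\mathbb L^1_n}$ is finite; this holds because the weight $(t(1-t)d(x,\partial X))^{n-1}$ makes the integral converge.

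\textbf{Step 2: interior oscillation decay.} For a ball $B_{\mathbb B}(r)\subset\mathbb B$, i.e. a Euclidean ball in $(s,x)$, I apply the Krylov–Safonov weak Harnack inequality and the local maximum principle. Both follow from the ABP estimate (Theorem \ref{U:T3}, applied on balls) together with the standard Caffarelli cube-decomposition argument. The outcome is
\[
\operatorname{osc}_{B(r/2)}w\le \theta\,\operatorname{osc}_{B(r)}w+C r^{\beta}\|g\|,
\]
with $\theta<1$ depending on $n$ and on the drift. The drift is harmless here, since its contribution scales as $r$ on small balls. Iterating yields interior Hölder continuity with exponent $\alpha_1$ and a constant independent of the distance to $\partial\mathbb B$ at unit scale.

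\textbf{Step 3: boundary Hölder estimate, and the main obstacle.} Near $\partial\mathbb B$ I use the $(G^d_{\mathbb B})$ condition: at least a fraction $\sigma$ of $B_{\mathbb B}(\tilde R)$ lies outside $\mathbb B$, where $v=0$. Extending $v^\pm$ by zero gives subsolutions, and the measure estimate yields $\sup_{B(\tilde R/2)}v^\pm\le\theta\sup_{B(\tilde R/\tau)}v^\pm+C\tilde R^{\beta}\|g\|$. This is the boundary analogue of Step 2, and it gives the decay $|v(t,x)|\le C d_{\partial\mathbb B}(t,x)^{\alpha_1}$.

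Combining this with Step 2 is the standard two-case argument. Given $P,Q$ with $d_{\mathbb B}(P,Q)=r$, take $\delta=\min d_{\partial\mathbb B}$ over the two points. If $r<\delta/2$, use the interior estimate on $B_{\mathbb B}(P,\delta)$ scaled by the boundary decay $\delta^{\alpha_1}$. Otherwise bound both $|v(P)|$ and $|v(Q)|$ by $C r^{\alpha_1}$.

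The main obstacle is uniformity along the infinite direction $s\to-\infty$ (the tip $t\to0$). The $L^1$-type weighted norm of $t^2f$ must control the local $L^n$ norms that enter the Harnack step on unit balls. I would handle this by applying ABP (Theorem \ref{U:T3}) locally in the same weighted $\mathbb L^1_n$ form, so that every constant depends only on $n,K_0,d_0,\sigma$. For $\rho\le\alpha_1$ the estimate follows from the $\alpha_1$ estimate and $\|v\|_\infty$, because the cone diameter of the cross-section is bounded.
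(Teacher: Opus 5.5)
Your plan follows the paper's proof in all essentials. The paper also works in $(\ln t,x)$ (inside Lemma~\ref{U:lemma3.5}) and bounds $\|v\|_{L^\infty}$ by Corollary~\ref{U:Theorem4.2}. It gets boundary decay $|v|\le C\,d_{\partial\mathbb{B}}^{\alpha}$ from the weak Harnack inequality applied to $\sup v^{\pm}-v^{\pm}$ extended across $\partial\mathbb{B}$, together with $(G^d_{\mathbb{B}})$. It gets interior oscillation decay from the weak Harnack inequality for $v-m_{\tilde r}$ and $M_{\tilde r}-v$; you need neither a local maximum principle nor a cube-decomposition re-derivation, since Lemma~\ref{U:lemma3.5} already supplies this. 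It then glues the two estimates.

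The differences are cosmetic. The paper gets boundary decay in one step: it divides by $d_{\partial\mathbb{B}}^{\alpha}+\delta$, uses $d_{\partial\mathbb{B}}(t^*,x^*)\le 3K_0\,d_{\partial\mathbb{B}}(t,x)$, and absorbs once $(1-\sigma^{1/p_0}/C)(3K_0)^{\alpha}<1$, rather than iterating over scales. It glues via $\omega(\tilde r)$ and Lemma~\ref{U:lemma3.7} rather than your two-point argument. If you do iterate, note that $(G^d_{\mathbb{B}})$ gives only one non-centred ball per point, of radius at most $K_0d_{\partial\mathbb{B}}$. The clean recursion is therefore for $\mu(\delta)=\sup\{v^+(P): d_{\partial\mathbb{B}}(P)\le\delta\}$, namely $\mu(\delta)\le\theta\,\mu(3K_0\delta)+CK_0\delta\,\|t^2f^-\|_{\mathbb{L}^1_n(\mathbb{B})}$.

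The one step that does not deliver the displayed inequality is the $h$-term, and ``handled symmetrically'' does not fix it. With $M=C^*K_0d_0\|t^2f\|_{\mathbb{L}^1_n(\mathbb{B})}$ you correctly get $h(M)\le h(v)\le h(-M)$, and both sides are genuinely used:
\begin{itemize}
\item $v-m_{\tilde r}$ is a supersolution with right-hand side $t^2f-t^2h(v)\le t^2f-t^2h(M)$;
\item $M_{\tilde r}-v$ is a supersolution with right-hand side $-t^2f+t^2h(v)\le -t^2f+t^2h(-M)$.
\end{itemize}
So your argument yields the estimate with $\max\{h(-M),-h(M)\}$ in place of $h(-M)$. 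The paper reaches the stated form only by asserting $|h(v)|\le h(-M)$. That needs $h(M)+h(-M)\ge 0$ and fails, e.g., for $h(u)=-\max\{u,0\}$. This is a defect shared with the paper's argument, not something you are missing. Along this line of proof, the displayed bound needs either an extra hypothesis such as $h(M)+h(-M)\ge0$ (e.g.\ $h$ odd) or the corrected right-hand side.

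Two smaller corrections.
\begin{itemize}
\item In $\mathbb{L}^{\gamma}_q$ with $\gamma=1$, $q=n$, the weight exponent $\frac{n}{q}-\gamma$ is zero. Hence $\|t^2f\|_{\mathbb{L}^1_n(\mathbb{B})}$ is exactly the $L^n$ norm of $e^{2a}f(e^{a},x)$ in the variables $(a,x)=(\ln t,x)$. It is not an $L^1$-type norm and carries no factor $(t(1-t)d(x,\partial X))^{n-1}$; $\|t^2\|_{\mathbb{L}^1_n(\mathbb{B})}<\infty$ simply because $\int_0^1t^{2n}\frac{dt}{t}<\infty$. Your ``main obstacle'' at the tip therefore disappears. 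Local $L^n$ norms are dominated by the global one. The domain $(-\infty,0)\times X$ and the operator $\triangle+(n-2)\partial_a$ are translation invariant in $a$, and the factor $e^{2a}\le 1$ is harmless, so the weak Harnack constants are uniform.
\item $\mathbb{B}$ has infinite cone diameter, so do not appeal to a bounded diameter when passing to $\rho<\alpha_1$. Instead split pairs at cone distance $\le 1$ (use the $\alpha_1$-bound) from pairs at distance $>1$ (use $2\|v\|_{L^\infty}$).
\end{itemize}
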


   \begin{Theorem}[Existence for viscosity solution]\label{U:B}
 If  $t^2f\in C(\overline{\mathbb{B}}) \cap  \mathbb{L}^{1}_{n}(\mathbb{B})$, then
     \eqref{U:H8} has at least one  continuous viscosity solution $v$ satisfying \eqref{U:T1.10}.
    \end{Theorem}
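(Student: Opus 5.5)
We prove Theorem \ref{U:B} by domain approximation and compactness, using the smooth convex exhausting domains $\{H_j\}$ of Assumption \ref{U:assumption2.2} together with the uniform estimates of Corollary \ref{U:Theorem4.2} and Theorem \ref{U:C}.

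\emph{Step 1: approximate problems.} For each $j$, $\overline{H_j}$ is a compact subset of $\mathbb{B}$, so $t$ is bounded below by a positive constant on $H_j$. Hence the operator $F$ in \eqref{U:eq:76} is, on $H_j$, a uniformly elliptic linear operator with bounded continuous coefficients. Writing $t\partial_t(t\partial_t u)=t^2u_{tt}+tu_t$, we have
\[
F=u_{tt}+t^{-2}\Delta_x u+(n-1)t^{-1}u_t .
\]
The zero-order term $h(u)$ is non-increasing, so the problem is proper in the sense of Crandall--Ishii--Lions. Since $\partial H_j$ is $C^2$, we construct barriers. Take $\pm\big(M_j\varphi_j\big)$ with $\varphi_j$ built from the distance function to $\partial H_j$, or from an exterior-ball paraboloid, using convexity. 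This gives a continuous viscosity subsolution and supersolution vanishing on $\partial H_j$. Perron's method (Ishii) and the comparison principle for proper uniformly elliptic equations then yield $v_j\in C(\overline{H_j})$ solving the equation in $H_j$ with $v_j=0$ on $\partial H_j$. If $h$ is only continuous and non-increasing, we may first replace $h$ by $h-\varepsilon u$ to make it strictly proper, and let $\varepsilon\to0$ at the end. The uniform estimates below do not see $\varepsilon$.

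\emph{Step 2: uniform estimates.} Each $H_j$ satisfies $(G^d_{H_j})$ with the same constants $K_0=2$, $\sigma=\frac12$, $d_0=\mathrm{diam}(X)$. The proofs of Corollary \ref{U:Theorem4.2} and Theorem \ref{U:C} depend only on these constants, so they apply on $H_j$ with constants independent of $j$. For the Hölder estimate, the term $h(v_j)$ is moved into the right-hand side. By the sup bound and monotonicity of $h$,
\[
|h(v_j)|\le h\!\left(-C^*K_0d_0\|t^2f\|_{\mathbb{L}^1_n}\right),
\]
and $\|t^2f\|_{\mathbb{L}^1_n(H_j)}\le\|t^2f\|_{\mathbb{L}^1_n(\mathbb{B})}$ since the weight in Definition \ref{U:def-1} is monotone under inclusion. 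Extending $v_j$ by $0$ on $\overline{\mathbb{B}}\setminus H_j$, the extended functions are continuous. I expect them to satisfy \eqref{U:T1.10} on $\overline{\mathbb{B}}$ uniformly in $j$. This extension is exactly where the boundary part of the Hölder estimate is needed: the estimate near $\partial H_j$ must be uniform, which is what the $(G^d)$ geometry provides.

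\emph{Step 3: passage to the limit.} The functions $v_j$ are uniformly bounded and uniformly $\rho$-Hölder with respect to $d_{\mathbb{B}}$. The Arzelà--Ascoli theorem, combined with a diagonal argument over compact subsets of $\overline{\mathbb{B}}$ (the $t\to0$ end being handled by the uniform $d_{\mathbb{B}}$-Hölder bound), gives a subsequence converging locally uniformly to some $v\in C(\overline{\mathbb{B}})$. By the stability of viscosity solutions under locally uniform convergence, $v$ is a viscosity solution in $\mathbb{B}$. The Hölder bound with zero data on $\partial H_j\to\partial\mathbb{B}$ gives $v=0$ on $\partial\mathbb{B}$, and \eqref{U:T1.10} passes to the limit by lower semicontinuity of the norm.

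\emph{Main obstacle.} The hardest point is Step 2's claim that the Hölder constants on $H_j$ are independent of $j$, especially up to $\partial H_j$. The degeneracy as $t\to0$ is harmless for each fixed $j$ but must not enter the constants. I would try first to check that the proof of Theorem \ref{U:C} uses only $(G^d)$ and the scale invariance of $F$ in the cone variables $(\ln t,x)$. In the variable $s=\ln t$, $t^2F=\partial_s^2+\Delta_x+(n-2)\partial_s$, which has constant coefficients, so uniformity should hold.
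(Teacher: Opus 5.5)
Your proposal follows essentially the paper's proof, in four stages:
\begin{enumerate}
\item exhaust $\mathbb{B}$ by the $H_j$ of Assumption \ref{U:assumption2.2};
\item solve the Dirichlet problem on each $H_j$ by barriers, Perron's method and comparison (the paper does this in the variable $a=\ln t$ and gets comparison from the strict subsolution $v_3+\lambda a$ instead of your regularization $h-\varepsilon u$);
\item apply Theorem \ref{U:C} on $H_j$ with $j$-independent constants;
\item pass to the limit by Arzel\`a--Ascoli and stability.
\end{enumerate}
Your extension of $v_j$ by zero is a slightly cleaner substitute for the paper's $\varepsilon/3$ argument at $\partial\mathbb{B}$.

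One inaccuracy, inherited from the paper: from $|v_j|\le c:=C^*K_0d_0\|t^2f\|_{\mathbb{L}^1_n(\mathbb{B})}$ and monotonicity you only get $h(c)\le h(v_j)\le h(-c)$. The correct bound is therefore $|h(v_j)|\le\max\{h(-c),-h(c)\}$, not $|h(v_j)|\le h(-c)$. This does not affect the existence argument, which only needs some bound uniform in $j$, but it matters for the precise form of \eqref{U:T1.10}.
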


\begin{Proposition}\label{U:Theorem 1.5}
If $u\in L^{\infty}(\mathbb{B})$ is a viscosity supersolution to \eqref{U:eq:11} and $f\in C(\overline{\mathbb{B}}) \cap L^{\infty}(\mathbb{B})$, then  $u$ is a  weak supersolution to \eqref{U:eq:11} and $u\in\mathbb{H}_{2}^{1,\frac{n-2}{2}}(\mathbb{B})$.
\end{Proposition}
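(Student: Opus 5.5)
A viscosity supersolution of \eqref{U:eq:11} is a weak supersolution; I would show this by the Juutinen--Lindqvist--Manfredi route through inf-convolution, reducing to the Euclidean Laplacian via the change of variables $s=\ln t$. With $t=e^{s}$ we have $t\partial_t=\partial_s$ and $\triangle_{\mathbb{B}}=\partial_s^2+\Delta_x$. Multiplying \eqref{U:eq:11} by $t^2=e^{2s}$ gives, on $\Omega:=(-\infty,0)\times X$,
\[
\partial_s^2u+\Delta_xu+(n-2)\partial_su+e^{2s}h(u)=e^{2s}f .
\]
This is a uniformly elliptic constant-coefficient operator $L u=\Delta u+(n-2)\partial_s u$ plus a zeroth-order term that is non-increasing in $u$. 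Multiplying by $e^{(n-2)s}$ puts it in divergence form:
\[
\operatorname{div}\bigl(e^{(n-2)s}\nabla u\bigr)+e^{ns}h(u)=e^{ns}f .
\]
The weight $e^{(n-2)s}\,ds\,dx=t^{n-2}\,\frac{dt}{t}\,dx$ is exactly what the space $\mathbb{H}_2^{1,\frac{n-2}{2}}$ encodes. Viscosity supersolutions are invariant under this smooth change of variables (test functions transform to test functions), so it suffices to prove the statement for the transformed equation on $\Omega$.

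\textbf{Step 1 (regularization).} For $\varepsilon>0$ set
\[
u_\varepsilon(z)=\inf_{w}\Bigl\{u(w)+\tfrac{|z-w|^2}{2\varepsilon}\Bigr\}.
\]
Since $u\in L^\infty$, $u_\varepsilon$ is semiconcave, locally Lipschitz, and increases to $u$ as $\varepsilon\to0$. The infimum is attained within distance $\sqrt{2\varepsilon\|u\|_\infty}$. Because the coefficients of $L$ are constant, $u_\varepsilon$ is again a viscosity supersolution on the slightly shrunken domain $\Omega_\varepsilon$, with right-hand side $e^{2s}f$ and the term $e^{2s}h(u)$ evaluated at the shifted point. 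This produces an error $\omega(\varepsilon)$, where $\omega$ is the modulus of continuity of $e^{2s}f$ and of $s\mapsto e^{2s}$ on compact sets; $h(u)$ remains bounded because $u$ is bounded and $h$ is continuous. Semiconcavity and Alexandrov's theorem make $u_\varepsilon$ twice differentiable a.e. Moreover, $D^2u_\varepsilon$ decomposes as an absolutely continuous part plus a nonpositive singular measure. At points of twice differentiability the inequality holds pointwise, so in the sense of distributions
\[
\operatorname{div}\bigl(e^{(n-2)s}\nabla u_\varepsilon\bigr)\le e^{ns}\bigl(f-h(u_\varepsilon)\bigr)+\omega(\varepsilon).
\]
In other words, $u_\varepsilon$ is a weak supersolution on compact subsets.

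\textbf{Step 2 (uniform energy bound).} I would test the weak inequality with $\phi=\eta^2(M-u_\varepsilon)\ge0$, where $M=\|u\|_\infty$ and $\eta$ is a cutoff. This is a Caccioppoli estimate: it bounds $\int\eta^2|\nabla u_\varepsilon|^2e^{(n-2)s}$ in terms of $M$, $\|f\|_\infty$ and $\sup_{|r|\le M}|h(r)|$, uniformly in $\varepsilon$. Weak compactness then gives $\nabla u_\varepsilon\rightharpoonup\nabla u$ in weighted $L^2_{loc}$, and $u_\varepsilon\to u$ pointwise. Passing to the limit in the linear weak inequality, using dominated convergence for $h(u_\varepsilon)\to h(u)$, shows that $u$ is a weak supersolution.

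\textbf{Step 3 (global weighted integrability) — the main obstacle.} The Caccioppoli bound is only local, whereas membership in $\mathbb{H}_2^{1,\frac{n-2}{2}}(\mathbb{B})$ requires control up to $\partial\mathbb{B}$ and as $s\to-\infty$ (the tip $t\to0$). The weight helps at the tip: $e^{(n-2)s}$ is integrable near $s=-\infty$ for $n\ge3$. So I would choose cutoffs adapted to the weight $(t(1-t)d(x,\partial X))$ from Definition~\ref{U:def-1}. The local estimate can then be summed over a dyadic covering of $\mathbb{B}$ by cone balls, with the norm's weight absorbing the cutoff-gradient terms near $\partial\mathbb{B}$. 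If this absorption fails near the lateral and top boundary, I would fall back on the Caccioppoli argument with the cutoff $\eta$ equal to the weight function itself, and check that $|\nabla_{\mathbb{B}}\eta|^2$ is integrable against $\frac{dt}{t}dx$ times the weight.
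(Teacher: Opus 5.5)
There is a genuine gap in Step 3, and it comes from misreading the target space. You claim that $\mathbb{H}_2^{1,\frac{n-2}{2}}$ encodes the weight $t^{n-2}\frac{dt}{t}dx=e^{(n-2)s}ds\,dx$. By Definition~\ref{U:def-1} with $q=2$ and $\gamma=\frac{n-2}{2}$, the exponent is $\frac n2-\gamma=1$, so
\[
\|g\|^2_{\mathbb{L}_2^{\frac{n-2}{2}}(\mathbb{B})}=\int_{\mathbb{B}}w^2|g|^2\,\frac{dt}{t}dx,\qquad w:=t(1-t)\,d(x,\partial X).
\]
This weight behaves like $t^2=e^{2s}$ at the tip for every $n$, and it vanishes on the top and lateral parts of $\partial\mathbb{B}$.

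The energy your symmetric divergence form controls is $\int t^{n-2}\eta^2|\nabla_{\mathbb{B}}u|^2\frac{dt}{t}dx$.
\begin{itemize}
\item With your fallback $\eta=w$, this carries an extra factor $t^{n-2}$ compared with the norm. So it does not bound $\|\nabla_{\mathbb{B}}u\|_{\mathbb{L}_2^{\frac{n-2}{2}}}$ near $t=0$, for any $n\ge3$.
\item Even with $\eta$ nonvanishing at the tip (which is what the integrability of $e^{(n-2)s}$ would allow), you only get the weight $t^{n-2}(1-t)^2d(x,\partial X)^2$. That is too weak for $n\ge5$.
\end{itemize}
Conversely, because $w$ vanishes on $\partial\mathbb{B}$, no boundary control is needed: a single interior Caccioppoli estimate with $w$ as the cutoff suffices. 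As written, Step 3 is a list of contingencies rather than an argument. Yet both conclusions depend on it: the membership claim, and also the weak-supersolution claim, since Definition~\ref{U:D2} requires $u\in\mathbb{H}_2^{1,\gamma}(\mathbb{B})$ globally.

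The missing step is the paper's weighted Caccioppoli estimate. Work with the non-symmetric form of Lemma~\ref{U:D3} (measure $\frac{dt}{t}dx$, drift $(n-2)t\partial_t u$ on the right). Test the inequality for $u_\epsilon$ with $w^2\xi^2(\sup u_\epsilon-u_\epsilon)$, where $\xi$ cuts off to $\mathbb{B}_{3r(\epsilon)}\cap\{t>3r(\epsilon)\}$, because $u_\epsilon$ is only a supersolution in $\mathbb{B}_{r(\epsilon)}$. Each resulting term is controlled:
\begin{itemize}
\item The drift is absorbed by Young's inequality at cost $C\|u\|_\infty^2\int_{\mathbb{B}}w^2\frac{dt}{t}dx<\infty$.
\item The cost of the cutoff $w$ is finite, since $|\nabla_{\mathbb{B}}w|\le Ct$ and $\int_0^1t^2\frac{dt}{t}<\infty$.
\item The $\xi$-terms are $O(1)$. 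Near $\partial\mathbb{B}$, $w\lesssim r(\epsilon)$ where $|\nabla_{\mathbb{B}}\xi|\lesssim r(\epsilon)^{-1}$. In the tip layer $2r(\epsilon)<t<3r(\epsilon)$, $w^2\lesssim t^2$ and $|\nabla_{\mathbb{B}}\xi|\lesssim1$.
\end{itemize}
This gives $\int w^2\xi^2|\nabla_{\mathbb{B}}u_\epsilon|^2\frac{dt}{t}dx\le C$ uniformly in $\epsilon$. Weak compactness in $\mathbb{L}_2^{\frac{n-2}{2}}(\mathbb{B})$ then identifies the limit as $\nabla_{\mathbb{B}}u$.

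In your own framework the same fix is $\eta=e^{-(n-2)s/2}w$. Then $e^{(n-2)s}\eta^2=w^2$, while $e^{(n-2)s}|\nabla\eta|^2\lesssim e^{2s}$ is still integrable. Your Whitney-type summation would also work, provided the local energies are weighted by $w^2$ rather than $e^{(n-2)s}$.

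Otherwise your Steps 1--2 match the paper: inf-convolution in $(\ln t,x)$, semiconcavity giving the distributional inequality, and the limit in the linear inequality correspond to Lemmas~\ref{U:Lemma5.2}--\ref{U:Lemma5.3} and Step 3 of its proof.
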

\begin{Proposition}\label{U:Theorem 1.6}
If $u\in L^{\infty}(\mathbb{B})$ is a viscosity solution to \eqref{U:eq:11} and $f\in C(\overline{\mathbb{B}}) \cap L^{\infty}(\mathbb{B})$, then  $u$ is a  weak solution to \eqref{U:eq:11} and $u\in\mathbb{H}_{2}^{1,\frac{n-2}{2}}(\mathbb{B})$.
\end{Proposition}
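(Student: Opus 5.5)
The plan is to deduce Proposition \ref{U:Theorem 1.6} from Proposition \ref{U:Theorem 1.5}, applied twice, together with the linearity of the operator structure in \eqref{U:eq:11}.

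First, since $u$ is a viscosity solution of \eqref{U:eq:11}, it is in particular a viscosity supersolution. Because $u\in L^\infty(\mathbb{B})$ and $f\in C(\overline{\mathbb{B}})\cap L^\infty(\mathbb{B})$, Proposition \ref{U:Theorem 1.5} applies directly. It gives that $u$ is a weak supersolution and that $u\in\mathbb{H}_{2}^{1,\frac{n-2}{2}}(\mathbb{B})$.

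Second, we pass to $w:=-u$. A viscosity subsolution $u$ of $F((t,x),\nabla_\mathbb{B}u,\nabla_\mathbb{B}^2u)+h(u)=f$ yields a viscosity supersolution $w$ of
\[
F((t,x),\nabla_\mathbb{B}w,\nabla_\mathbb{B}^2w)+\tilde h(w)=-f, \qquad \tilde h(s):=-h(-s).
\]
This holds because $F$ is linear in $(\nabla_\mathbb{B}u,\nabla_\mathbb{B}^2u)$: if $\varphi$ touches $w$ from below, then $-\varphi$ touches $u$ from above, and the sign flips. The new nonlinearity $\tilde h$ is again continuous and non-increasing with $\tilde h(0)=0$, and $-f$ satisfies the same hypotheses as $f$. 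Hence the problem for $w$ is of the same class, and Proposition \ref{U:Theorem 1.5} applies to $w$. It shows that $w$ is a weak supersolution, which means $u$ is a weak subsolution of \eqref{U:eq:11}.

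Finally, we combine the two weak inequalities. For every nonnegative test function $\phi$, we have the weak inequality in one direction from $u$ and in the other from $w$. This gives equality for all $\phi\ge0$, and then for all $\phi$ by splitting $\phi=\phi^+-\phi^-$, using density of nonnegative test functions in the test space. The term $h(u)\phi$ causes no difficulty, since $h(u)$ is bounded ($u$ bounded, $h$ continuous). So $u$ is a weak solution lying in $\mathbb{H}_{2}^{1,\frac{n-2}{2}}(\mathbb{B})$.

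The only point requiring care is the sign-flip step: checking that the hypotheses of Proposition \ref{U:Theorem 1.5} (monotonicity of $h$ and the regularity of the right-hand side) are preserved under $u\mapsto -u$. By the computation above they are.
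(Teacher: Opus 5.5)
Your proposal is correct and follows the paper's proof essentially verbatim. You apply Proposition \ref{U:Theorem 1.5} to $u$ to get the weak supersolution property and $u\in\mathbb{H}_{2}^{1,\frac{n-2}{2}}(\mathbb{B})$, then apply it to $-u$ with $-f$ and $-h(-\cdot)$ in place of $f$ and $h$ to get the weak subsolution property. Your final step, which extends the identity to sign-changing test functions, is not needed for the paper's notion of weak solution, which is being both a weak sub- and supersolution. If you do want that step, write $\phi=(\phi+\|\phi\|_\infty\eta)-\|\phi\|_\infty\eta$ with a nonnegative cutoff $\eta\equiv 1$ on $\operatorname{supp}\phi$, rather than using $\phi^{\pm}$, which are not smooth.
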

\begin{Theorem}[Existence for weak solution]\label{U:Theorem1.7}
    If  $f\in C(\overline{\mathbb{B}}) \cap L^{\infty}(\mathbb{B})$, then
     \eqref{U:H8} has at least one weak solution $u$ satisfying \eqref{U:T1.10} in $\mathbb{H}_{2}^{1,\frac{n-2}{2}}(\mathbb{B})$.
 \end{Theorem}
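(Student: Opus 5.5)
The proof combines Theorem \ref{U:B} with Proposition \ref{U:Theorem 1.6}. Existence comes from the viscosity theory, and the weak-solution property and regularity come from the equivalence result.

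\textbf{Step 1: reduce the hypothesis.} Let $f\in C(\overline{\mathbb{B}})\cap L^\infty(\mathbb{B})$. We first check that $t^2f\in C(\overline{\mathbb{B}})\cap\mathbb{L}^1_n(\mathbb{B})$. Continuity is clear. For the weighted norm, with $q=1$ and $\gamma=n$ the weight exponent is $\frac{n}{q}-\gamma=0$, so
$$\|t^2f\|_{\mathbb{L}^1_n(\mathbb{B})}=\int_{\mathbb{B}}t^2|f|\,\frac{dt}{t}\,dx\le \|f\|_{L^\infty}\,|X|\int_0^1 t\,dt<\infty.$$

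\textbf{Step 2: get a viscosity solution.} Theorem \ref{U:B} now gives a continuous viscosity solution $v$ of \eqref{U:H8}, with $v=0$ on $\partial\mathbb{B}$, satisfying the weighted H\"older bound \eqref{U:T1.10}. In particular $\|v\|_{L^\infty(\overline{\mathbb B})}$ is finite, so $v\in L^\infty(\mathbb{B})$.

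\textbf{Step 3: equation \eqref{U:eq:11} versus problem \eqref{U:H8}.} The operator in \eqref{U:H8} is $F+h$, which by \eqref{U:eq:76} is exactly the left-hand side of \eqref{U:eq:11}. So $v$ is a viscosity solution of \eqref{U:eq:11} in $\mathbb{B}$.

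\textbf{Step 4: apply the equivalence.} Since $v\in L^\infty(\mathbb{B})$ and $f\in C(\overline{\mathbb{B}})\cap L^\infty(\mathbb{B})$, Proposition \ref{U:Theorem 1.6} shows that $v$ is a weak solution of \eqref{U:eq:11} and that $v\in\mathbb{H}_2^{1,\frac{n-2}{2}}(\mathbb{B})$.

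\textbf{Step 5: boundary condition (main obstacle).} It remains to see that $v$ attains the Dirichlet data in the weak sense, i.e.\ lies in the closure of $C_0^\infty(\mathbb{B})$ in $\mathbb{H}_2^{1,\frac{n-2}{2}}$. Here $v$ is continuous up to $\overline{\mathbb{B}}$ with $v=0$ on $\partial\mathbb{B}$, and it is H\"older with respect to $d_{\mathbb{B}}$. The plan is to truncate: $v_\varepsilon=(v-\varepsilon)^+-(v+\varepsilon)^-$ vanishes in a neighbourhood of $\partial\mathbb{B}$. Then:
\begin{itemize}
\item each $v_\varepsilon$ has compact support in $\mathbb{B}$ away from $\partial\mathbb{B}$, and it lies in the space because $v$ does;
\item $v_\varepsilon\to v$ in $\mathbb{H}_2^{1,\frac{n-2}{2}}$ by dominated convergence, since $\nabla_{\mathbb B}v_\varepsilon=\nabla_{\mathbb B}v\,\chi_{\{|v|>\varepsilon\}}$.
\end{itemize}
This gives $v\in\mathring{\mathbb{H}}_2^{1,\frac{n-2}{2}}(\mathbb{B})$, assuming that is how the paper encodes the zero boundary condition for weak solutions. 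If instead the boundary condition is understood through continuity, it is already immediate from Step 2.

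With Steps 1--5, $u:=v$ is a weak solution of \eqref{U:H8} in $\mathbb{H}_2^{1,\frac{n-2}{2}}(\mathbb{B})$, and it inherits \eqref{U:T1.10} from Theorem \ref{U:B}.
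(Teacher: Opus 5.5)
Your proof is correct and follows the paper's argument exactly: Theorem~\ref{U:B} supplies a bounded continuous viscosity solution of \eqref{U:H8} satisfying \eqref{U:T1.10}, and Proposition~\ref{U:Theorem 1.6} upgrades it to a weak solution in $\mathbb{H}_{2}^{1,\frac{n-2}{2}}(\mathbb{B})$, while the paper skips your Step 1 and leaves the boundary condition in the continuous sense ($v\in C(\overline{\mathbb{B}})$, $v=0$ on $\partial\mathbb{B}$), so it has no Step 5. Two small corrections to those extra steps: in the paper's convention $\mathbb{L}_q^\gamma$, the space $\mathbb{L}^1_n$ has $q=n$, $\gamma=1$, i.e.\ an unweighted $L^n(\frac{dt}{t}dx)$ norm, which is still finite because $\int_{\mathbb{B}} t^{2n-1}|f|^n\,dt\,dx<\infty$; and because $\partial\mathbb{B}$ omits the face $\{t=0\}$, your truncations $v_\varepsilon$ need not have compact support in $\mathbb{B}$, so Step 5 also needs a cutoff $\chi(t)$ near $t=0$ with $|t\partial_t\chi|\le C$, which is harmless since the weight $(t(1-t)d(x,\partial X))^2\frac{dt}{t}$ vanishes like $t\,dt$ there.
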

\begin{Proposition}\label{U:Theorem1.8}
    For any $\gamma\in \mathbb{R}$, if $u\in \mathbb{H}_{2}^{1,\gamma}(\mathbb{B})\cap C(\mathbb{B})$ is  a weak supersolution of
     \eqref{U:eq:11}, then $u$ is a viscosity supersolution of  \eqref{U:eq:11}.
 \end{Proposition}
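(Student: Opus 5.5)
We argue by contradiction, following the standard scheme for showing that continuous weak supersolutions are viscosity supersolutions (as in the works cited in the introduction on the equivalence of weak and viscosity solutions). Here \eqref{U:eq:11} is read as $-\big(F((t,x),\nabla_\mathbb{B}u,\nabla_\mathbb{B}^2u)+h(u)\big)\ge -f$, i.e.\ $F+h(u)\le f$ for supersolutions. We use the weak formulation coming from the divergence structure
$$t^{-2}\triangle_{\mathbb{B}}u+t^{-2}(n-2)t\partial_t u=t^{-n}\,\mathrm{div}_{\mathbb{B}}\big(t^{n-2}\nabla_{\mathbb{B}}u\big),$$
so a weak supersolution satisfies, for every $0\le\varphi\in C_0^\infty(\mathbb{B})$,
$$-\int_{\mathbb{B}} t^{n-2}\nabla_\mathbb{B}u\cdot\nabla_\mathbb{B}\varphi\,\frac{dt}{t}dx+\int_{\mathbb{B}}t^{n}(h(u)-f)\varphi\,\frac{dt}{t}dx\le 0.$$
Because the weight $t^{n-2}$ is smooth and positive on compact subsets of $\mathbb{B}$, membership in $\mathbb{H}_2^{1,\gamma}(\mathbb{B})$ means only that $u\in H^1_{loc}(\mathbb{B})$. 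The argument is purely local, so the weight $\gamma$ plays no role.

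Suppose $u$ is not a viscosity supersolution. Then there exist $(t_0,x_0)\in\mathbb{B}$ and $\phi\in C^2$ touching $u$ from below at $(t_0,x_0)$, meaning $\phi(t_0,x_0)=u(t_0,x_0)$ and $\phi\le u$ near $(t_0,x_0)$, such that
$$F((t_0,x_0),\nabla_\mathbb{B}\phi,\nabla^2_\mathbb{B}\phi)+h(\phi(t_0,x_0))-f(t_0,x_0)>0.$$

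We first make the touching strict by replacing $\phi$ with $\phi-\epsilon|(t,x)-(t_0,x_0)|^4$, which keeps the inequality at $(t_0,x_0)$. Since $F$, $h$, $f$ and $u$ are continuous and $t_0>0$, there are $r>0$ and $\delta>0$ with $B(r)\Subset\mathbb{B}$ such that
$$F(\cdot,\nabla_\mathbb{B}\phi,\nabla^2_\mathbb{B}\phi)+h(\phi)-f>\delta \quad\text{in } B(r).$$
Strictness gives $u-\phi\ge m>0$ on $\partial B(r)$. We then set $\psi=\phi+m/2$, shrinking $r$ if necessary. By continuity of $h$ and $\phi$, we keep $F(\psi)+h(\psi)-f>\delta/2$ in $B(r)$; note that $h(\psi)\ge h(\phi)-o(1)$ once $m$ is small relative to the modulus of continuity of $h$, and we can choose $\epsilon$ and $m$ small enough for this. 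Thus $\psi$ is a classical strict subsolution in $B(r)$, with $\psi<u$ on $\partial B(r)$ and $\psi(t_0,x_0)>u(t_0,x_0)$.

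Next we test with $\varphi=(\psi-u)^+\chi_{B(r)}$. This function belongs to $H^1_0(B(r))$ because $\psi<u$ near $\partial B(r)$, and it is nonnegative and not identically zero since it is positive near $(t_0,x_0)$. It is admissible in the weak formulation by density. Multiplying the classical strict inequality for $\psi$ by $t^n\varphi$ and integrating by parts gives
$$-\int t^{n-2}\nabla_\mathbb{B}\psi\cdot\nabla_\mathbb{B}\varphi\,\frac{dt}{t}dx+\int t^n(h(\psi)-f)\varphi\,\frac{dt}{t}dx\ge\frac{\delta}{2}\int t^n\varphi\,\frac{dt}{t}dx>0.$$
Subtracting the weak supersolution inequality for $u$, we obtain
$$-\int_{\{\psi>u\}}t^{n-2}|\nabla_\mathbb{B}(\psi-u)|^2\frac{dt}{t}dx+\int t^n\big(h(\psi)-h(u)\big)\varphi\,\frac{dt}{t}dx>0.$$
On $\{\psi>u\}$ we have $h(\psi)\le h(u)$ because $h$ is non-increasing, so the second integral is $\le0$. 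The first integral is also $\le0$, so the left-hand side is $\le 0$, which is a contradiction.

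The main obstacle is this comparison step. Care is needed in two places. The first is justifying that $(\psi-u)^+$ is an admissible test function, which requires compact support inside $\mathbb{B}$ together with the density of $C_0^\infty$ in $H^1_0$ with the locally equivalent weighted norm. The second is the $m/2$ shift, where we must ensure that the strict inequality survives adding a constant, which uses the continuity of $h$ as described above. The key structural point is that monotonicity of $h$ gives exactly the favourable sign for the zeroth-order term.
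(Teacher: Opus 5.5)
Your proof is correct, but the comparison step, which is the heart of the argument, differs from the paper's.

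The paper also makes the touching strict, there via $-|(t,x)-(t_0,x_0)|_{\mathbb{B}}^4$, and lifts by the boundary gap $m$. However, it freezes the zeroth-order term as $h(u(t,x))$ in the strict inequality, so adding $m$ costs nothing. It then invokes its general weak comparison principle, Lemma~\ref{U:lemma6.6}. That lemma works with the formulation of Lemma~\ref{U:D3}, where the drift $(n-2)t\partial_t u$ stays in non-divergence position. It controls the drift through Young's inequality, the Euclidean Sobolev inequality (hence $n\ge 3$) and a level-set argument on $(u_1-u_2-M+\varepsilon)^+$. Strictly speaking, with $h$ frozen the paper is comparing for the linear equation with source $f-h(u)$.

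You instead notice that the operator is the Laplace--Beltrami operator $t^{-n}\mathrm{div}_{\mathbb{B}}(t^{n-2}\nabla_{\mathbb{B}}\,\cdot)$ of the cone metric. This absorbs the drift, and the comparison collapses to a one-line energy inequality tested with $(\psi-u)^+$, using only that $h$ is non-increasing. The price is that you keep the genuine nonlinearity $h(\psi)$ and must use uniform continuity of $h$ to survive the shift by $m/2$. What your route buys is a self-contained argument with no Sobolev embedding and no dimension restriction. The paper's route reuses a comparison lemma it needs anyway.

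Two details to tighten:
\begin{enumerate}
\item State explicitly that your weighted weak formulation is equivalent to the paper's definition. Inserting $\psi=t^{n-2}\varphi$ into \eqref{U:D2.3}, exactly as Lemma~\ref{U:D3} does with $t^{2}$, makes the $(n-2)t\partial_t u$ terms cancel; the reverse direction uses $t^{2-n}\varphi$.
\item Order your choices carefully. First fix $r_0$ and $\delta$ so that the strict inequality holds on $B(r_0)$. Then take $r<r_0$ small enough that $m(r)=\min_{\partial B(r)}(u-\phi)$, which tends to $0$ as $r\to 0$, lies below the modulus-of-continuity threshold of $h$ on the relevant compact range. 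The parameter $\epsilon$ plays no role in that step.
\end{enumerate}
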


\begin{Theorem}\label{U:Theorem1.9}
    For any $\gamma\in \mathbb{R}$,  if $u\in \mathbb{H}_{2}^{1,\gamma}(\mathbb{B})\cap C(\mathbb{B})$ is  a  weak solution of
     \eqref{U:eq:11}, then $u$ is a viscosity solution of  \eqref{U:eq:11}.
 \end{Theorem}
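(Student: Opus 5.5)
The plan is to derive Theorem \ref{U:Theorem1.9} from Proposition \ref{U:Theorem1.8}. A weak solution $u\in\mathbb{H}_{2}^{1,\gamma}(\mathbb{B})\cap C(\mathbb{B})$ of \eqref{U:eq:11} is, by definition, both a weak supersolution and a weak subsolution. So it suffices to show that $u$ is a viscosity supersolution and a viscosity subsolution, because a continuous function with both properties is a viscosity solution.

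The supersolution half is exactly Proposition \ref{U:Theorem1.8}. For the subsolution half, I will use the symmetry of the equation under $u\mapsto -u$. The operator $F$ in \eqref{U:eq:76} is linear in $(\nabla_{\mathbb{B}}u,\nabla^2_{\mathbb{B}}u)$. Hence $u$ is a weak subsolution of \eqref{U:eq:11} if and only if $w:=-u$ is a weak supersolution of
\begin{equation*}
t^{-2}\triangle_{\mathbb{B}}w+t^{-2}(n-2)(t\partial_t w)+\tilde h(w)=\tilde f(t,x),
\end{equation*}
with $\tilde h(s):=-h(-s)$ and $\tilde f:=-f$. Here $\tilde h$ is again continuous, non-increasing, and satisfies $\tilde h(0)=0$, and $\tilde f$ is continuous. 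So the transformed problem lies in the same class, and Proposition \ref{U:Theorem1.8} applies to it. We also have $w\in\mathbb{H}_{2}^{1,\gamma}(\mathbb{B})\cap C(\mathbb{B})$, since the weighted norm is invariant under sign change.

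Applying Proposition \ref{U:Theorem1.8} to $w$ shows that $w$ is a viscosity supersolution of the transformed equation. Next I translate test functions. Suppose $\varphi\in C^2$ touches $u$ from above at $(t_0,x_0)$. Then $-\varphi$ touches $w$ from below there, and the supersolution inequality for $w$ at $-\varphi$ is
\begin{equation*}
-F((t_0,x_0),\nabla_{\mathbb{B}}\varphi,\nabla^2_{\mathbb{B}}\varphi)-h(u(t_0,x_0))\le -f(t_0,x_0).
\end{equation*}
Multiplying by $-1$ gives exactly the viscosity subsolution inequality for $u$.

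The main point to check is that Proposition \ref{U:Theorem1.8} is stated, or proved, for the whole admissible class of $h$ and $f$, so that it covers $\tilde h$ and $\tilde f$. Its proof uses only continuity, monotonicity and linearity of the principal part, all of which are preserved here. If it were literally stated only for the fixed $h,f$, I would instead repeat its contradiction argument with all inequalities reversed.

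That argument runs as follows. Suppose a $C^2$ test function $\varphi$ touches $u$ from above at $(t_0,x_0)$ but violates the subsolution inequality there. Perturb $\varphi$ to $\varphi+\delta(\ln^2(t/t_0)+|x-x_0|^2)-\eta$ on a small cone ball $B_{\mathbb{B}}((t_0,x_0),r)$, so that it is a strict classical supersolution lying above $u$ on the boundary but not everywhere inside. Then test the weak formulation with $(u-\tilde\varphi)^+$, using the monotonicity of $h$, to reach a contradiction.

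Combining the two halves, $u$ is a viscosity solution of \eqref{U:eq:11}.
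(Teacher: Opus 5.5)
Your proposal is correct and matches the paper's proof: the supersolution half is Proposition \ref{U:Theorem1.8}, and the subsolution half comes from applying the same proposition to $-u$, with $-f$ in place of $f$ and $-h(-\cdot)$ in place of $h(\cdot)$, exactly as you do. Your checks that $-h(-\cdot)$ is still continuous, non-increasing and vanishes at $0$, and your translation of test functions, fill in details the paper leaves implicit; since Proposition \ref{U:Theorem1.8} is stated for the whole admissible class of $h$ and $f$, your fallback argument is not needed.
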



This paper concerns a class of non-divergence  nonlinear elliptic equations driven by the cone degenerate Laplacian which motivated by cone calculus, \cite{BWS,ES}. Here we highlight the contributions of this paper.  Firstly, the cone degenerate Laplacian reflects the geometric characteristics of the conical singularity on the straight cone, and has been rarely studied.  Similar to classical situation, in this paper,  we obtain the existence  of viscosity solutions and the existence of distribution solutions of  the Dirichlet problem  for cone degenerate Laplace equation \eqref{U:H8}.  Secondly, the  existence of lower-order terms  and  nonlinearity of \eqref{U:eq:11} cause  challenges.
 And the requirement that the function
$h$ in \eqref{U:eq:11} satisfies only the non-increasing condition renders equation \eqref{U:eq:11} more general.  To obtain  the  existence of weak solution to \eqref{U:eq:11},  we adopt  a  research method: studying  the relationship between weak solutions and viscosity solutions, as well as the existence of viscosity solutions.  Finally, in order to over the difficulties from cone degeneracy in analysis, we  construct new  test functions and establish a framework of cone degenerate operators. Specifically,  we define the cone distance in Definition  \ref{U:cone 9}, weighted H\"older norm in \eqref{U:eq:55} and the weighted Sobolev space $\mathbb{H}^{m,\gamma}_p(\mathbb{B})$  in Definition \ref{U:Definition2.2}


 This paper is organized as follows. In Section 2, we present some preliminaries.  In Section 3, we prove Theorem \ref{U:T3} and  derive the ABP estimate for  viscosity solutions of \eqref{U:eq:11}.
 In Section 4, we obtain the weighted  H\"older estimate for  viscosity solutions of \eqref{U:H8} in Theorem \ref{U:C}. Furthermore, based on Theorem \ref{U:C}, we prove the existence of  viscosity solutions to \eqref{U:H8} in  Theorem \ref{U:B}. In Section 5, we establish the equivalence of weak solutions and viscosity solutions. Furthermore, we  obtain the  existence of weak solutions to \eqref{U:H8} in Theorem \ref{U:Theorem1.7}.

\section{Preliminaries}
In this section, we will present  some definitions and remarks, such as weighted Sobolev spaces, viscosity  solutions and other auxiliary  knowledge.


\begin{definition}[\cite{CHLWZ}]\label{U:Definition2.2}
For $m\in \mathbb{N}$, and $\gamma\in \mathbb{R}$, the spaces
$$\mathbb{H}^{m,\gamma}_q(\mathbb{B}):=\Big\{u\in\mathcal{D}^\prime(\mathbb{B}):
 (t\partial_{t})^\alpha\partial_{x}^\beta u   \in \mathbb{L}_q^\gamma(\mathbb{B})\Big\},$$ for arbitrary $\alpha\in
\mathbb{N}$, $\beta\in \mathbb{N}^{N-1}$, and $\alpha+|\beta|\leq m$. Here $ \mathbb{L}_q^\gamma(\mathbb{B})$ is defined in Definition   \ref{U:def-1}.
 Denote $\mathbb{H}^{m,\gamma}_{q,0}(\mathbb{B})$ as the subspace of $\mathbb{H}^{m,\gamma}_q(\mathbb{B})$, which is defined as the closure of $C_{0}^{\infty}(\mathbb{B})$ with  respect to the norm $||\cdot||_{\mathbb{H}^{m,\gamma}_q(\mathbb{B})}$.
\end{definition}
\begin{Remark}[\cite{CHLWZ}]
$\mathbb{H}^{m,\gamma}_q(\mathbb{B})$ is a Banach space.
\end{Remark}

Through simple calculations, we rewrite  \eqref{U:eq:11} as
    $$
 \begin{aligned}
 &t^{-2}div_ \mathbb{B}(\nabla_ \mathbb{B}u)
     +t^{-2}(n-2)(t\partial_t u)+h(u)=div_ \mathbb{B}(t^{-2}\nabla_ \mathbb{B}u)
    +nt^{-2}(t\partial_t u)+h(u)=f(t,x),
    \end{aligned}$$
    and then we can define the weak supersolution (resp. subsolution) of \eqref{U:eq:11} as follows.
\begin{definition}[weak solution]\label{U:D2}For any $\gamma\in \mathbb{R}$, a function $ u \in \mathbb{H}^{1,\gamma}_2(\mathbb{B})$ is   called a weak supersolution (resp. subsolution) to \eqref{U:eq:11} if  $h(u)\in \mathbb{L}^{n}_{1,loc}(\mathbb{B})$, and
\begin{equation}\label{U:D2.2}
\int \limits_{\mathbb{B}}t^{-2}\nabla_ \mathbb{B}u\cdot \nabla_ \mathbb{B}\psi \frac{dt}{t}dx\geq (resp.\leq)\int \limits_{\mathbb{B}}(-f+h(u)+ nt^{-2}(t\partial _{t} u) )\psi \frac{dt}{t}dx\end{equation}
for all non-negative $\psi\in \mathcal{C}_{0}^{\infty}(\mathbb{B}).$
\end{definition}
Next, we provide an equivalent definition of weak solutions, and give a brief proof.
\begin{lemma}\label{U:D3}For any $\gamma\in \mathbb{R}$, a function $u\in\mathbb{H}^{1,\gamma}_2(\mathbb{B})$ is   called  a weak supersolution (resp. subsolution) to \eqref{U:eq:11} if  $h(u)\in \mathbb{L}^{n}_{1,loc}(\mathbb{B})$, and
\begin{equation}\label{U:D2.3}
\int \limits_{\mathbb{B}}\nabla_ \mathbb{B}u\cdot \nabla_ \mathbb{B}\psi \frac{dt}{t}dx\geq (resp.\leq)\int \limits_{\mathbb{B}}(-t^2f+t^2h(u)+(n-2)(t\partial _{t} u) )\psi \frac{dt}{t}dx\end{equation}
for all non-negative $\psi\in \mathcal{C}_{0}^{\infty}(\mathbb{B}).$
\end{lemma}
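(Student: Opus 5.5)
The plan is to show that \eqref{U:D2.2} and \eqref{U:D2.3} are equivalent for every $u\in\mathbb{H}^{1,\gamma}_2(\mathbb{B})$ with $h(u)\in \mathbb{L}^{n}_{1,loc}(\mathbb{B})$. The key observation is that the test class (non-negative $\psi\in C_0^\infty(\mathbb{B})$) is invariant under multiplication by $t^{2}$ and by $t^{-2}$. The reason is that $\mathrm{supp}\,\psi$ is a compact subset of $\mathbb{B}=(0,1)\times X$, so $t$ is bounded away from $0$ there, and $t^{\pm 2}$ is smooth and positive on a neighbourhood of the support. Hence $\psi\mapsto t^{2}\psi$ is a bijection of the set of non-negative test functions onto itself.

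First, starting from \eqref{U:D2.2}, I substitute the test function $\phi=t^{2}\psi$. Using $\nabla_{\mathbb{B}}=(t\partial_t,\partial_{x})$, the product rule gives
\begin{equation*}
\nabla_{\mathbb{B}}(t^2\psi)=t^2\nabla_{\mathbb{B}}\psi+2t^2\psi\, e_1,
\end{equation*}
where $e_1$ is the first unit vector, because $t\partial_t(t^2)=2t^2$. Hence
\begin{equation*}
t^{-2}\nabla_{\mathbb{B}}u\cdot\nabla_{\mathbb{B}}(t^2\psi)=\nabla_{\mathbb{B}}u\cdot\nabla_{\mathbb{B}}\psi+2(t\partial_t u)\psi .
\end{equation*}
The right-hand side of \eqref{U:D2.2} with $\phi$ becomes $\int(-t^2f+t^2h(u)+n(t\partial_t u))\psi\,\frac{dt}{t}dx$. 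Moving the term $\int 2(t\partial_t u)\psi\,\frac{dt}{t}dx$ to the right turns $n$ into $n-2$, which yields exactly \eqref{U:D2.3}. The converse direction is the same computation read backwards: given $\psi$, apply \eqref{U:D2.3} to $t^{-2}\psi$. Both inequality directions, and hence both super- and subsolutions, are handled identically, since the substitution preserves non-negativity and involves no sign changes.

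The only point needing care is that every integral is finite, so that the rearrangement is legitimate. On the compact support $K\Subset\mathbb{B}$, the weights $(t(1-t)d(x,\partial X))^{n/2-\gamma}$ and $t^{\pm2}$ are bounded above and below. Therefore $u,\nabla_{\mathbb{B}}u\in L^2(K)$, $h(u)\in L^1(K)$ by the local integrability assumption, and $f$ is continuous on $K$. All integrands are thus integrable against the bounded, compactly supported $\psi$ and $\nabla_{\mathbb{B}}\psi$. I expect no real obstacle: the main step is simply tracking the lower-order term $2t\partial_t u$ produced by differentiating the weight $t^2$.
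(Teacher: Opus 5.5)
Your proposal is correct and is the paper's argument: substitute $t^{2}\psi$ into \eqref{U:D2.2} to obtain \eqref{U:D2.3}, and substitute $t^{-2}\psi$ into \eqref{U:D2.3} for the converse. You additionally write out the product-rule term $2(t\partial_t u)\psi$ that turns $n$ into $n-2$, and you check local integrability on the support; the paper leaves both implicit.
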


\begin{proof} Assume $u\in\mathbb{H}^{1,\gamma}_2(\mathbb{B}) $  satisfies \eqref{U:D2.2} for all non-negative $\psi\in \mathcal{C}_{0}^{\infty}(\mathbb{B})$. Since $t^{2}\psi\in \mathcal{C}_{0}^{\infty}(\mathbb{B})$ and $t^{2}\psi$ is non-negative, substituting $t^{2}\psi$ into \eqref{U:D2.2} yields that \eqref{U:D2.3} holds. Conversely, if $u\in\mathbb{H}^{1,\gamma}_2(\mathbb{B}) $  satisfies \eqref{U:D2.3} for all non-negative $\psi\in \mathcal{C}_{0}^{\infty}(\mathbb{B}),$ then substituting $t^{-2}\psi\in \mathcal{C}_{0}^{\infty}(\mathbb{B})$ into \eqref{U:D2.3} yields that \eqref{U:D2.2} holds.
\end{proof}

We will use the equivalent definition of weak solutions in  Lemma \ref{U:D3}  in the following context.
\begin{definition}[\cite{CHLWZ}, Viscosity solution]\label{U:D1}
 Suppose $G$  be a real-valued continuous function on $\Gamma:=\mathbb B\times \mathbb{R} \times \mathbb{R}^n \times S^n$
and  {for any} $A\ge B$,
 $$G((t,x),w,P,A)  \ge G((t,x),w,P,B).$$
   A function $u\in LSC(\mathbb{B})\ (\text{resp}.\ u\in USC(\mathbb{B}))$ is called a viscosity supersolution \ $(\text{resp.\ subsolution})$
of the following equation
\begin{equation} \label{U:H1}
    G((t,x),u,\nabla_\mathbb{B}u,\nabla_\mathbb{B}^2u)=0,
\end{equation}
 if for all ${\phi}\in C^2(\mathbb{B})$,  the following inequality holds at each local minimum $($resp.
maximum$)$ point $(t_0,x_0)\in \mathbb{B}$ of $u-{\phi}$
$$G((t_0,x_0),u(t_0,x_0),\nabla_\mathbb{B}{\phi}(t_0,x_0),\nabla_\mathbb{B}^2{\phi}(t_0,x_0)) \le (resp. \geq)\  0.$$
We call  $u$ is a viscosity solution of \eqref{U:H1} when it is both a subsolution and a supersolution.
\end{definition}

\begin{definition}[\cite{LX}, Pucci operators]\label{U:D11}
For $0<\lambda\leq  \Lambda$  and $M\in S^n$, we define pucci   operators as follows:
    \begin{equation}
         \mathcal{M}^+_{\lambda,\Lambda}(M)=\sup \limits_{\lambda I\le A\le \Lambda I} tr(AM)=\Lambda \sum \limits_{e_i>0} e_i+\lambda \sum\limits_{e_i<0} e_i,
         \end{equation}
         \begin{equation}
        \mathcal{M}^-_{\lambda,\Lambda}(M)=\inf \limits_{\lambda I\le A\le \Lambda I} tr(AM)=\lambda \sum \limits_{e_i>0} e_i+\Lambda \sum\limits_{e_i<0} e_i,
        \end{equation}
where $\{e_i\}$ are eigenvalues of $M$.
\end{definition}

\begin{Remark}\label{U:R1}

 According to the Definition $\ref{U:D11}$, we can derive
 $$\mathcal{M}^-_{1,1}(\nabla_\mathbb{B}^2u)=\triangle _\mathbb{B} u =div_{\mathbb{B}}(\nabla_\mathbb{B}u)= tr(\nabla_\mathbb{B}^2u) = \mathcal{M}^+_{1,1}(\nabla_\mathbb{B}^2u).$$
 \end{Remark}

\section{\textbf{Alexandrov-Bakelman-Pucci  estimate }}
In this section, we   provide  the proof of Theorem \ref{U:T3} to establish the Alexandrov-Bakelman-Pucci  estimate for viscosity solutions of \eqref{U:eq:11}.

     If $u$  is a  viscosity supersolution of
     \begin{equation} \label{U:eq:22}t^{-2}div_ \mathbb{B}(\nabla_ \mathbb{B}u)
     +t^{-2}(n-2)(t\partial_t u)=f(t,x)
      \end{equation} in $  \mathbb{B}$,
      let us define
\begin{equation}\label{U:3.1}
u_m=\begin {cases}\min \{u,m\}  &\quad \ (t,x)\in A,\\m  &\quad \ (t,x) \in B_{\mathbb{B}}(\tilde{d}) \backslash A,\end {cases}\end{equation}
where $A=\mathbb B \cap B_{\mathbb{B}}(\tilde{d}) \neq\emptyset$, $B_{\mathbb{B}}(\tilde{d}) \backslash A \neq\emptyset$ and $m=\mathop {\inf}\limits_{\partial A \cap B_{\mathbb{B}}(\tilde{d})}u(t,x)$.
 Then $u_m $ is a viscosity supersolution to the following equation
\begin{equation}\label{U:4.22}
    t^{-2}div_ \mathbb{B}(\nabla_ \mathbb{B}u)
     +t^{-2}(n-2)(t\partial_t u)= f^+(t,x) \chi(A),
\end{equation}
where $f^{+}(t,x)=\max\{f(t,x),0\}$,  $\chi(A)$ is the indicator function on $A$.

 In fact,  we  assume  $u_m-\phi$ attains a  local minimum at
    $(t_0,x_0)\in A $ for all $\phi\in C^2(\mathbb{B}) $. If $u_m(t_0,x_0)=u(t_0,x_0)$, then we have
    $$ (u-\phi)(t,x)\geq (u_m-\phi)(t,x)\geq (u-\phi )(t_0,x_0)\ \ \ \ \forall (t,x)\in U,$$
   where $U$ is a neighborhood of $(t_0,x_0)$, which implies $u-\phi $ attains a  local minimum at  $(t_0,x_0)$.  Since $u$ is the
   the viscosity supersolution of \eqref{U:eq:11}, we have
   $$ \left(t^{-2}div_ \mathbb{B}(\nabla_ \mathbb{B}\phi)
     +t^{-2}(n-2)(t\partial_t \phi)\right)\big|_{t_0,x_0}\leq f^+(t_0,x_0)\leq  f^+(t_0,x_0)\chi(A). $$
     Similarly, if $u_m(t_0,x_0)=m$, we have $m-\phi $ attains a  local minimum at  $(t_0,x_0)$, which  indicates that
     $\nabla_ \mathbb{B}\phi(t_0,x_0)=0,$  then we get
     \begin{equation}\label{U:3.2}
     \left(t^{-2}div_ \mathbb{B}(\nabla_ \mathbb{B}\phi)
     +t^{-2}(n-2)(t\partial_t \phi)\right)\big|_{t_0,x_0}=0\leq f^+(t_0,x_0)\chi(A). \end{equation}

       On the other hand , we  assume  $u_m-\phi$ attains a  local minimum at $(t_0,x_0)\in B_{\mathbb{B}}(\tilde{d})  \backslash A$. If $(t_0,x_0) \notin  \partial A\cap B_{\mathbb{B}}(\tilde{d}) $, then obviously $m-\phi $ attains a  local minimum at  $(t_0,x_0)$, and $\eqref{U:3.2}$ holds.
    If $(t_0,x_0)\in \partial A\cap B_{\mathbb{B}}(\tilde{d}) $, then for all $\phi\in C^2(\mathbb{B}) $, since $u(t_0,x_0)\geq m $,  we also  have $m-\phi $ attains a  local minimum at  $(t_0,x_0)$. Thus \eqref{U:3.2} holds.

So $u_m $ is a viscosity supersolution of $\eqref{U:4.22}$ in $ B_{\mathbb{B}}(\tilde{d}) $.

Now, we present a known result from \cite{MLA}.
\begin{lemma}[\cite{MLA}, Lemma 2.2] \label{U:lemma2.2}
Let $b_0\geq 0$ and $0<\tau<1$. Suppose that $u(t,x)\in LSC(B({{{1}/{\tau}}}))$ is  a viscosity supersolution of
\begin{equation}\mathcal{M}_{\lambda, \Lambda}^{-}(\nabla^2u)-b_0|\nabla u|=g,\end{equation}
 with $g\in C(\overline{B({{1}/{\tau}})})$, and $u\geq 0$ in $B({{{1}/{\tau}}})$. Then
$$\left(\frac{1}{\vert B(1)\vert }\int_{B(1)}u^{p_0}dtdx\right)^{\frac{1}{p_0}}\leq C\left(\mathop{\inf }\limits_{B(1)} u+{\vert\vert g^+\vert\vert_{L^{n}(B({1/\tau}))}}\right) ,$$
where $C$ and $p_0$ are positive numbers, depending on $n, \lambda,  \Lambda, b_0,$ and $\tau$.
\end{lemma}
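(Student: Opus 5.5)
This is the weak Harnack inequality for viscosity supersolutions of Pucci extremal equations with a first-order term. The plan is to follow the Caffarelli--Cabr\'e scheme, adapted to the drift term $-b_0|\nabla u|$ as in the Koike--\'Swi\k{e}ch theory. By scaling ($u\mapsto u/(\inf_{B(1)}u+\|g^+\|_{L^n(B(1/\tau))}+\delta)$) it suffices to treat the case
\[
\inf_{B(1)}u\le 1,\qquad \|g^+\|_{L^n(B(1/\tau))}\le \varepsilon_0 ,
\]
with $\varepsilon_0$ small. The goal is then a universal bound $\int_{B(1)}u^{p_0}\le C$.

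The first step is an ABP estimate for $\mathcal{M}^-_{\lambda,\Lambda}(\nabla^2 w)-b_0|\nabla w|\le g$ on a ball $B(R)$ with $R\le 2/\tau$. It should read
\[
\sup_{B(R)} w^-\le \sup_{\partial B(R)}w^-+C(n,\lambda,\Lambda,b_0R)\,R\,\|g^+\|_{L^n(\Gamma)} ,
\]
where $\Gamma$ is the lower contact set. I would prove it through the convex envelope of $-w^-$ and the area formula for its gradient map. The drift is absorbed by integrating $(|\xi|^n+\mu^n)^{-1}$ over the image of the gradient map, rather than $|\xi|$-independent weights; this is the standard trick, and the constant depends on $b_0R$, which is harmless since $R\le 2/\tau$.

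The second step is a barrier. I will take a radial function $\varphi$ with $\varphi\le -2$ on a small cube $Q_{1}$ and $\varphi\ge 0$ outside $B(1/\tau)$ (or $\partial B(2\sqrt n)$ at the unit scale). It must satisfy $\mathcal{M}^+_{\lambda,\Lambda}(\nabla^2\varphi)+b_0|\nabla\varphi|\le \xi$, with $\xi$ bounded and supported in a small ball. The power $|y|^{-\alpha}$ with $\alpha$ large relative to $\Lambda/\lambda$ and $b_0$, suitably truncated and smoothed, should work. Applying the ABP estimate to $u+\varphi$ yields the basic measure estimate: if $\inf_{Q_3}u\le1$ then
\[
|\{u\le M\}\cap Q_1|\ge \mu|Q_1|
\]
for universal $M>1$ and $\mu\in(0,1)$. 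The scale constraint $B(1/\tau)$ is handled by covering $B(1)$ with finitely many small cubes whose triples lie in $B(1/\tau)$, chaining the infimum along the way. That chaining is where $\tau$ enters the constants.

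The third step is to iterate via the Calder\'on--Zygmund cube decomposition. Combining the measure estimate at all dyadic scales, which is legitimate because rescaling $u(x_0+ry)$ changes $b_0$ to $rb_0\le b_0$ and keeps $\|g\|_{L^n}$ invariant, gives
\[
|\{u>M^k\}\cap Q_1|\le (1-\mu)^k .
\]
From this, $\int u^{p_0}\le C$ follows for $p_0$ small, by the layer-cake formula.

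I expect the main obstacle to be the interplay between the drift and the scaling in steps one and two. The ABP constant must remain uniform under rescaling, and the barrier must be a strict supersolution with the gradient term included. I would first secure the ABP estimate with an explicit dependence on $b_0R$, and then check that every rescaled problem in the decomposition has drift coefficient at most $b_0$.
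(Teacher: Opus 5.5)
The paper gives no proof of this lemma. It is quoted from \cite{MLA} and used as a black box, with $\tau=1/2$ and $b_0=(n-2)(K_0d_0+1)$ in the proof of Lemma~\ref{U:lemma3.5}, so there is no argument in the paper to compare against step by step.

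Your outline is the standard Caffarelli--Cabr\'e proof of the weak Harnack inequality, adapted to a gradient term, and it is sound. Its four pieces are:
\begin{itemize}
\item an ABP estimate with the weight $(|\xi|^n+\mu^n)^{-1}$, whose constant depends on $b_0R$;
\item a barrier $M_1-M_2|y|^{-\alpha}$ with $\lambda(\alpha+1)\ge (n-1)\Lambda+b_0R$ on the ABP ball;
\item a Calder\'on--Zygmund iteration, which is legitimate because $u(x_0+ry)$ carries drift $rb_0\le b_0$;
\item a covering and chaining argument to pass to the pair $B(1)\subset B(1/\tau)$.
\end{itemize}

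The citation buys the paper brevity. In particular it spares the one genuinely viscosity-theoretic step, which your sketch leaves implicit. To apply the area formula to the gradient map of the convex envelope of $-w^-$ when $w$ is only lower semicontinuous, you must first show the envelope is $C^{1,1}$ on the contact set. This is done by testing the supersolution inequality with paraboloids. With the drift it still goes through: slopes of supporting planes are bounded by $\sup w^-$ divided by the distance to the boundary, so $b_0|\nabla\ell|$ is a bounded perturbation, and $g$ is continuous. In exchange, your route makes explicit how $C$ and $p_0$ depend on $b_0$ and on $\tau$. The $b_0$-dependence enters through the ABP constant, which grows exponentially in $b_0R$. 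The $\tau$-dependence enters through a chain of length of order $(1/\tau-1)^{-1}$.

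A few slips need repair:
\begin{itemize}
\item To reach $\|g^+\|_{L^n(B(1/\tau))}\le\varepsilon_0$ you must divide by $\inf_{B(1)}u+\varepsilon_0^{-1}\|g^+\|_{L^n(B(1/\tau))}+\delta$. Dividing by $\inf_{B(1)}u+\|g^+\|_{L^n(B(1/\tau))}+\delta$ only gives $\|g^+\|\le 1$.
\item The barrier must satisfy $\varphi\le -2$ on $Q_3$, where the hypothesis $\inf_{Q_3}u\le 1$ sits, with $\xi$ supported inside $Q_1$. You wrote $\varphi\le -2$ on $Q_1$.
\item Under $y\mapsto x_0+ry$ the $L^n$ norm of the right-hand side is multiplied by $r$, not preserved. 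This is harmless for $r\le 1$.
\item The small cubes in the covering need their $4\sqrt n$-dilates inside $B(1/\tau)$, not merely their triples. The measure estimate uses $u\ge 0$ on the ABP ball $B(2\sqrt n)$ at unit scale.
\end{itemize}
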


\begin{lemma} [Weak Harnack  inequality]\label{U:lemma3.5}
If $u$ is a non-negative viscosity supersolution of \eqref{U:eq:22} in $B_{\mathbb{B}}({2r})$ with $r\leq K_0d_0+1$,  then there are positive  constants  $p_0$ and $C$ such that
   \begin{equation}\label{U:eq:91}
   \left(\frac{1}{\vert B_{\mathbb{B}}({r})\vert_{\mathbb{B}} }\int_{B_{\mathbb{B}}({r})}u^{p_0}\frac{dt}{t}dx\right)^{\frac{1}{p_0}}\leq C\left(\mathop{\inf }\limits_{B_{\mathbb{B}}({r})} u+r {\vert\vert t^2f^+\vert\vert_{\mathbb{L}^{1}_{n}\left( B_{\mathbb{B}}({2r}) \right)}}\right),\end{equation}
    with the constants $K_0$, $d_0$  from \textup{Assumption \ref{U:Assumption1.1}}, and  $p_0$, $C$ depending on $n, K_0, d_0.$

\end{lemma}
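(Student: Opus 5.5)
The plan is to reduce to Lemma \ref{U:lemma2.2} by passing to logarithmic coordinates. Set $s=\ln t$ and $\tilde u(s,x)=u(e^s,x)$. Then $t\partial_t=\partial_s$, so $\nabla_\mathbb{B}u$ becomes the Euclidean gradient $\nabla\tilde u$ and $\triangle_\mathbb{B}u$ becomes the Euclidean Laplacian $\Delta\tilde u$. Multiplying \eqref{U:eq:22} by $t^2>0$, the equation reads
\[
\Delta \tilde u+(n-2)\partial_s\tilde u=\tilde g,\qquad \tilde g(s,x):=e^{2s}f(e^s,x).
\]
Under this change of variables the cone ball $B_{\mathbb{B}}((t_0,x_0),r)$ becomes the Euclidean ball $B((\ln t_0,x_0),r)$, and the measure $\frac{dt}{t}dx$ becomes $ds\,dx$. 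Because the map is a $C^2$ diffeomorphism and $t^2>0$, test functions correspond one-to-one, so $\tilde u$ is a nonnegative viscosity supersolution in $B(2r)$. Moreover, since $\Delta\tilde u+(n-2)\partial_s\tilde u\ge \mathcal M^-_{1,1}(\nabla^2\tilde u)-(n-2)|\nabla\tilde u|$, $\tilde u$ is a supersolution of
\[
\mathcal M^-_{1,1}(\nabla^2\tilde u)-(n-2)|\nabla\tilde u|=\tilde g.
\]

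Next, rescale to unit size. Put $w(z)=\tilde u(z_0+rz)$ for $z\in B(2)$. Then $w$ is a supersolution of
\[
\mathcal M^-_{1,1}(\nabla^2 w)-r(n-2)|\nabla w|=r^2\tilde g(z_0+rz).
\]
Because $r\le K_0d_0+1$, the drift coefficient satisfies $b_0:=r(n-2)\le (n-2)(K_0d_0+1)$. A supersolution with a given drift is also a supersolution with any larger drift, so we may use this uniform $b_0$. This is why the constants depend on $n,K_0,d_0$.

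Apply Lemma \ref{U:lemma2.2} with $\tau=1/2$, $\lambda=\Lambda=1$. The right-hand side term scales as
\[
\|r^2\tilde g^+(z_0+r\cdot)\|_{L^n(B(2))}=r^{2-1}\|\tilde g^+\|_{L^n(B(z_0,2r))}=r\,\|\tilde g^+\|_{L^n(B(z_0,2r))}.
\]
Undoing the scaling on the left gives the averaged $L^{p_0}$ mean over $B_{\mathbb{B}}(r)$ with respect to $\frac{dt}{t}dx$, and the infimum is unchanged.

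The remaining step, and the main bookkeeping obstacle, is to identify $\|\tilde g^+\|_{L^n(ds\,dx)}$ with the weighted norm $\|t^2f^+\|_{\mathbb{L}^{1}_{n}}$. Definition \ref{U:def-1} with $q=\gamma=n$ gives weight exponent $n/q-\gamma=1-n$. So I have to check that the weight factor $(t(1-t)d(x,\partial X))^{1-n}$ is compatible, i.e.\ bounded below (it is $\ge c>0$ since $t(1-t)d\le C$ and $1-n<0$). Then $\|t^2f^+\|_{L^n(\frac{dt}{t}dx)}\le C\|t^2f^+\|_{\mathbb{L}^1_n}$ once the norm is read with the $n$-th power integral. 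If the exponent convention in the paper means something slightly different, I would adjust here, absorbing constants depending on $n$ and $\mathrm{diam}(X)\le d_0$. Also, since $f$ is only defined on $\mathbb B$, I would extend $f^+$ by zero outside $\mathbb B$, as done in \eqref{U:4.22}, so that the norm over $B_{\mathbb B}(2r)$ makes sense. Continuity of $\tilde g$, as required by Lemma \ref{U:lemma2.2}, follows from the hypothesis, or from a standard approximation when the zero extension is discontinuous.
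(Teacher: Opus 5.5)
Your proof is correct and is essentially the paper's argument. The paper also passes to logarithmic coordinates, rescales by $r$, and invokes Lemma \ref{U:lemma2.2} with $\tau=1/2$, $\lambda=\Lambda=1$ and $b_0=(n-2)(K_0d_0+1)$; it applies the cone dilation $T(s,y)=(s^r\bar t^{1-r},\bar x+r(y-\bar x))$ first and then sets $a=\ln s$, the reverse of your order, which is immaterial.

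One small correction to your bookkeeping: $\mathbb{L}^{1}_{n}$ is $\mathbb{L}^{\gamma}_{q}$ with $\gamma=1$ and $q=n$, so the weight exponent is $\frac{n}{q}-\gamma=0$. Hence $\|t^2f^+\|_{\mathbb{L}^{1}_{n}(B_{\mathbb{B}}(2r))}$ is exactly the unweighted $L^n(\frac{dt}{t}dx)$ norm, which equals $\|\tilde g^+\|_{L^n(B(z_0,2r))}$. Your weight comparison is therefore unnecessary, though harmless since it goes in the right direction.
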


\begin{proof}

  Assume that $B_{\mathbb{B}}({2r})$ is centered at $(\bar{t},\bar{x}).$  Let
 $u_l(s,y)=u(t,x)=u(T(s,y))$  with $$(t,x)=T(s,y)=\left(s^r \bar{t}^{1-r},\bar{x}+r(y-\bar{x})\right).$$ Similarly, for any $\phi(t,x)\in C^2(B_{\mathbb{B}}({2r}))$,  we set $\phi_{l}(s,y)=\phi(t,x)=\phi(T(s,y))$.
  A simple calculation shows that $$s\partial_s (\phi_l)=s \cdot \partial_t \phi\cdot r s^{r-1} \bar{t}^{1-r}=r (t\partial_t )\phi, \
    \partial_y ( \phi_l)=r \partial_x \phi ,
   $$
$$
(s\partial_s)^{2} (\phi_l)=s\partial_s(r\cdot t\partial_t \phi)=r^2\cdot t\partial_t(t \partial_t\phi)=r^2(t\partial_t)^2\phi, \
\partial^2_y (\phi_l)=r^2 \partial^2_x \phi, $$ $$
\partial_y(s\partial_s(\phi_l))=\partial_y(r\cdot t\partial_t \phi)=r^2\partial_x(t\partial_t \phi),\
s\partial_s(\partial_y  \phi_l)=s\partial_s(r\cdot \partial_x \phi)=r^2 t\partial_t(\partial_x\phi)
.
$$  That is,
  \begin{equation}\label{U:eq:13}\nabla_\mathbb{B}\phi_l(s,y)=r\nabla_\mathbb{B}\phi(t,x), \  \nabla^2_\mathbb{B}\phi_l(s,y)= r^2\nabla^2_\mathbb{B}\phi(t,x).\end{equation}
For any $\phi_{l}\in C^{2}(B_{\mathbb{B}}({2}))$,  if $u_{l}-\phi_{l}$ attains local minimum at $(s_0, y_0)$ ,   then $u-\phi$ attains local minimum at $(t_0, x_0)$ with $(t_0, x_0)=T(s_0, y_0)$.
 Since  $u$ is a viscosity supersolution of \eqref{U:eq:22},  by \eqref{U:eq:13}, we have
 $$\left(tr(\nabla^2_\mathbb{B}\phi_l)
     +(n-2)r(s\partial_s \phi_l)\right)\big|_{(s_0, y_0)} \leq \left(r^2(t^2 f(t,x) )\circ T\right)\big|_{(s_0, y_0)}.$$
      So $u_l$ is the viscosity supersolution of the following equation,
\begin{equation}\label{U:eq:14}
\begin{aligned}
 tr\left(\nabla_\mathbb{B}^2u_l\right)+r(n-2){(s\partial_ s) u_l}
   = r^2{ \left(t^{2}f(t,x)\right)\circ T}.
  \end{aligned}
\end{equation}
We set
$$a=\ln s,\ \  \overset{T}{} u_l(a,y)=u_l(s,y), \overset{T}{} \phi_l(a,y)=\phi_l(s,y)$$ $$\overset{T}{}\left({\left(t^{2}f \right)\circ T}\right)(a,y)=\left({\left(t^{2}f\right)\circ T}\right)(s,y).$$
We obtain through calculation that
$$\partial_a (\overset{T}{} \phi_l)=\partial_s \phi_l\cdot e^a=s\partial_s \phi_l, \
    \partial_y (\overset{T}{} \phi_l)=\partial_y \phi_l,
   $$
$$
\partial^2_a (\overset{T}{} \phi_l)=\partial_a (s\partial_s \phi_l)=s\partial_s(s \partial_s\phi_l)=(s\partial_s)^2\phi_l, \
\partial^2_y (\overset{T}{} \phi_l)=\partial^2_y \phi_l, $$ $$
\partial_y(\partial_a (\overset{T}{} \phi_l))=\partial_y(s\partial_s \phi_l)=\partial_y(s\partial_s \phi_l),\
\partial_a(\partial_y (\overset{T}{} \phi_l))=\partial_a(\partial_y \phi_l)=s\partial_s(\partial_y\phi_l)
.
$$  That is,
\begin{equation}\label{U:eq:866}
 \nabla (\overset{T}{} \phi_l)(a,y)=\nabla_\mathbb{B}\phi_l(s,y),\nabla^2(\overset{T}{} \phi_l)(a,y)=\nabla_\mathbb{B}^2 \phi_l(s,y).\end{equation}
For any $\overset{T}{} \phi_{l}\in C^{2}(B({2}))$,  if $\overset{T}{}u_{l}-\overset{T}{}\phi_{l}$ attains local minimum at $(a_0, y_0)$,   then $u_{l}-\phi_{l}$ attains local minimum at $(s_0, y_0)$ with $a_0=\ln s_0$. Since  $u_{l}$ is a viscosity supersolution of \eqref{U:eq:14},  by \eqref{U:eq:866}, we have
$$
\begin{aligned}
\left( tr\left(\nabla^2 (\overset{T}{} \phi_{l})\right)+r(n-2){\partial _a(\overset{T}{} \phi_{l})}\right)\big|_{(a_0, y_0)}\leq \left( {\overset{T}{}\left( \left(t^{2}f(t,x) \right)\circ T\right)r^2}\right)\big|_{(a_0, y_0)}.
  \end{aligned}
$$So $\overset{T}{} u_l$ is the viscosity supersolution of the following equation,
$$
\begin{aligned}
 tr\left(\nabla^2 (\overset{T}{}  u_l)\right)+r(n-2){\partial _a(\overset{T}{} u_l)}= {\overset{T}{}\left( \left(t^{2}f(t,x) \right)\circ T\right)r^2}.
  \end{aligned}
$$
 By Remark \ref{U:R1}, $ \overset{T}{}u_l$ satisfies
\begin{equation}\label{U:eq:79}
\mathcal{M}_{1,1}^{-}\left( \nabla^2(\overset{T}{} u_l)\right)+r(n-2) \partial_a (\overset{T}{} u_l) \leq {\overset{T}{}\left( \left(t^{2}f(t,x) \right)\circ T\right)r^2}\end{equation} in viscosity sense.
Choosing $b_0=(n-2)(K_0d_0+1)$ and $\tau=1/2$ , by Lemma \ref{U:lemma2.2} we have
$$\left(\frac{1}{\vert B(1)\vert }\int_{B(1)}(\overset{T}{} u_l)^{p_0}dady\right)^{\frac{1}{p_0}}\leq C\left(\mathop{\inf }\limits_{B(1)} \overset{T}{} u_l+ r^2{\vert\vert {\overset{T}{}\left( \left(t^{2}f^+ \right)\circ T\right)}\vert\vert_{L^{n}(B(2))}}\right) ,$$
and then
 $$\left(\frac{1}{\vert B_{\mathbb{B}}({r})\vert_{\mathbb{B}} }\int_{B_{\mathbb{B}}({r})}u^{p_0}\frac{dt}{t}dx\right)^{\frac{1}{p_0}}\leq C\left(\mathop{\inf }\limits_{B_{\mathbb{B}}({r})} u+r {\vert\vert t^2f^+\vert\vert_{\mathbb{L}^{1}_{n}\left( B_{\mathbb{B}}({2r}) \right)}}\right).$$
\end{proof}

\begin{lemma}[Boundary Weak Harnack  inequality]\label{U:lemma3.31}
 If $u_m$ is a non-negative viscosity supersolution of \eqref{U:4.22} in $B_{\mathbb{B}}({2r})$ with $r\leq K_0d_0+1$, and  $A=B_{\mathbb{B}}({2r})\cap \mathbb{B}$,  then there are positive  constants  $p_0$ and $C$ such that
   \begin{equation}\label{U:eq:44}
   \left(\frac{1}{\vert B_{\mathbb{B}}({r})\vert_{\mathbb{B}} }\int_{B_{\mathbb{B}}({r})}u_m^{p_0}\frac{dt}{t}dx\right)^{\frac{1}{p_0}}\leq C\left(\mathop{\inf }\limits_{B_{\mathbb{B}}({r})} u_m+r {\vert\vert t^2f^+\vert\vert_{\mathbb{L}^{1}_{n}\left(A\cap B_{\mathbb{B}}({2r}) \right)}}\right),\end{equation}
    with the constants $K_0$, $d_0$  from \textup{Assumption \ref{U:Assumption1.1}}, and  $p_0$, $C$ depending on $n, K_0, d_0.$
\end{lemma}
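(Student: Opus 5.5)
The idea is to repeat the proof of Lemma \ref{U:lemma3.5} essentially word for word. The only changes are that $u$ is replaced by the truncated function $u_m$ and the right-hand side $f$ by $f^+\chi(A)$. Before the lemma we checked that $u_m$ is a viscosity supersolution of \eqref{U:4.22} in the whole cone ball $B_{\mathbb{B}}(2r)$, not just in $\mathbb{B}$. So $u_m$ is a non-negative viscosity supersolution of \eqref{U:eq:22} in $B_{\mathbb{B}}(2r)$ with right-hand side $g:=f^+\chi(A)$. Here $g$ vanishes outside $A=B_{\mathbb{B}}(2r)\cap\mathbb{B}$.

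First, with $(\bar t,\bar x)$ the center of $B_{\mathbb{B}}(2r)$, we rescale by $T(s,y)=(s^r\bar t^{1-r},\bar x+r(y-\bar x))$. By \eqref{U:eq:13}, $(u_m)_l=u_m\circ T$ is a viscosity supersolution of \eqref{U:eq:14} in $B_{\mathbb{B}}(2)$, with right-hand side $r^2\,(t^2 f^+\chi(A))\circ T$. Next, the logarithmic change $a=\ln s$ and \eqref{U:eq:866} turn this into a Euclidean problem on $B(2)$:
\[
\mathcal{M}^-_{1,1}\big(\nabla^2 \overset{T}{}(u_m)_l\big)-(n-2)(K_0d_0+1)\,\big|\nabla \overset{T}{}(u_m)_l\big|\le r^2\,\overset{T}{}\big((t^2f^+\chi(A))\circ T\big)
\]
in the viscosity sense. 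This uses $r(n-2)\partial_a\phi\ge -r(n-2)|\nabla\phi|\ge -b_0|\nabla\phi|$ for $r\le K_0d_0+1$ and $b_0=(n-2)(K_0d_0+1)$. It is the same monotonicity step as in \eqref{U:eq:79}, now stated explicitly.

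We then apply Lemma \ref{U:lemma2.2} with $\lambda=\Lambda=1$, $\tau=1/2$ and this $b_0$. The function $\overset{T}{}(u_m)_l$ is lower semicontinuous and non-negative, since $u$ is LSC and the constant $m$ glued along $\partial A\cap B_{\mathbb{B}}(2r)$ is below $u$ there. This gives
\[
\Big(\tfrac{1}{|B(1)|}\int_{B(1)}(\overset{T}{}(u_m)_l)^{p_0}\Big)^{1/p_0}\le C\Big(\inf_{B(1)}\overset{T}{}(u_m)_l+r^2\big\|\overset{T}{}((t^2f^+\chi(A))\circ T)\big\|_{L^n(B(2))}\Big).
\]

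Finally we undo the change of variables. The Lebesgue measure $da\,dy$ on the image of $B_{\mathbb{B}}(r)$ is $r^{-n}\frac{dt}{t}dx$, so averages are preserved. The $L^n$ norm picks up a factor $r^{-1}$, which turns $r^2$ into $r$. Because of the factor $\chi(A)$, the integral is only over $A\cap B_{\mathbb{B}}(2r)$. One must check that this matches the weighted norm $\mathbb{L}^1_n$: with $q=n$ and $\gamma=1$ the weight exponent $\frac nq-\gamma$ is $0$, so $\|t^2f^+\|_{\mathbb{L}^1_n}$ is exactly the $L^n(\frac{dt}{t}dx)$ norm of $t^2f^+$. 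This yields \eqref{U:eq:44} with $p_0,C$ depending only on $n,K_0,d_0$.

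The main obstacle is the semicontinuity and supersolution property of $u_m$ across $\partial A\cap B_{\mathbb{B}}(2r)$, which is needed to apply Lemma \ref{U:lemma2.2} on the full ball. This was handled in the discussion preceding Lemma \ref{U:lemma2.2}. One should only verify there that $u_m$ is LSC, which requires $\liminf u\ge m$ when approaching $\partial A$ from inside. This holds by the definition of $m$ as an infimum together with the lower semicontinuity of $u$ up to the boundary.
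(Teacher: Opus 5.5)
Your proposal is correct and follows the paper's proof: the paper notes that $u_m$ is a non-negative viscosity supersolution of \eqref{U:eq:22} in $B_{\mathbb{B}}(2r)$ with $f^+\chi(A)$ in place of $f$ and invokes Lemma \ref{U:lemma3.5}, while you unfold that lemma's rescaling and logarithmic change of variables and make explicit the drift absorption $r(n-2)\partial_a\phi\ge -b_0|\nabla\phi|$, the Jacobian $\frac{dt}{t}dx=r^n\,da\,dy$, and the identification of $\mathbb{L}^1_n$ with $L^n(\frac{dt}{t}dx)$. One technicality, shared with the paper, is that Lemma \ref{U:lemma2.2} requires a continuous right-hand side while $f^+\chi(A)$ is discontinuous across $\partial\mathbb{B}$; you can fix this by noting that $u_m$ stays a supersolution for any continuous majorant of $t^2f^+\chi(A)$, then letting such majorants converge in $L^n$ using $|\partial A|=0$.
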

\begin{proof} Since $u_m$ is a non-negative viscosity supersolution of \eqref{U:4.22} in $B_{\mathbb{B}}({2r})$,  $u_m$ is a non-negative viscosity supersolution of \eqref{U:eq:22} with $f^+(t,x) \chi(A)$  instead of $f(t,x)$
in $B_{\mathbb{B}}({2r})$.  By  Lemma \ref{U:lemma3.5}, we  easily draw the conclusion.
\end{proof}
\begin{Remark}
 The constant $C$ in \eqref{U:eq:91} and \eqref{U:eq:44} dosen't depend on $r$, that is, $C$ is the same for any $r\leq K_0d_0+1$.

\end{Remark}

Now we can prove  Theorem \ref{U:T3}.

 \begin{proof}[ \textbf{Proof of Theorem \ref{U:T3}}]
     Let $M=\mathop{\sup \limits_{\mathbb{B}} v^+}$, and  $ u=M-v^+\geq0 $. Since $v$ is a viscosity  subsolution of \eqref{U:eq:11}, we claim that $u$ is the viscosity supersolution of
$$t^{-2}div_ \mathbb{B}(\nabla_ \mathbb{B}u)
     +t^{-2}(n-2)(t\partial_t u)=f^-(t,x),\ \ (t,x)\in\mathbb{B}.$$
Indeed, for all $\varphi\in C^{2}(\mathbb{B})$, if $u-\varphi$ attains a local   minimum in $(t_0,x_0)\in \mathbb{B}$, then for any $(t,x)$ in the  neighborhood of $(t_0,x_0)$, we have
$$ (M-v^+-\varphi)(t,x)= (u-\varphi)(t,x)\geq  (u-\varphi)(t_0,x_0)=(M-v^+-\varphi)(t_0,x_0).$$
If $v^+(t_0,x_0)=0$, then $(M-\varphi)(t,x)\geq (M-\varphi)(t_0,x_0)$.  Thus $\nabla_ \mathbb{B}\varphi(t_0,x_0)=0, \nabla^2_ \mathbb{B}\varphi(t_0,x_0)\leq0 $, and
$$\left(t^{-2}div_ \mathbb{B}(\nabla_ \mathbb{B}\varphi)
     +t^{-2}(n-2)(t\partial_t \varphi)\right)\big|_{t_0,x_0}\leq 0\leq f^-(t_0,x_0);$$
     if $v^+(t_0,x_0)=v(t_0,x_0)\geq 0$, then $ (M-v-\varphi)(t,x) \geq  (M-v-\varphi)(t_0,x_0)$, that is, $(v-(-\varphi))(t,x)\leq (v-(-\varphi))(t_0,x_0)$.  Since $v$ is a viscosity  subsolution of \eqref{U:eq:11}, we have
     $$ \left(t^{-2}div_ \mathbb{B}(\nabla_ \mathbb{B}(-\varphi))
     +t^{-2}(n-2)(t\partial_t (-\varphi))\right)\big|_{(t_0,x_0)}+h(v(t_0,x_0))\geq f(t_0,x_0).$$
Since $v(t_0,x_0)\geq 0$ and  $h(0)=0,$  the non-increasing property of $h$ implies that $h(v(t_0,x_0))\leq 0$.  Then

 $$ \left(t^{-2}div_ \mathbb{B}(\nabla_ \mathbb{B}\varphi)
     +t^{-2}(n-2)(t\partial_t \varphi)\right)\big|_{(t_0,x_0)}\leq -f(t_0,x_0)\leq f^{-}(t_0,x_0).$$
So we have proved the claim.

For all fixed $(t,x)\in \mathbb{B}$,  define $u_m$ as in \eqref{U:3.1} with $\tilde{d}={2\tilde{R}_{(t,x)}}$ and
 $ A=\mathbb{B}\cap B_{\mathbb{B}}({2\tilde{R}_{(t,x)}})$, where $\tilde{R}_{(t,x)}$ defined in  Assumption \ref{U:Assumption1.1},
and then we have $\tilde{R}_{(t,x)}\leq K_0d_0$ and
$u_m$ is the viscosity supersolution of \eqref{U:4.22}
in $ B_{\mathbb{B}}({2\tilde{R}_{(t,x)}})$ with $f^-$ instead of $f^+$. For convenience, we define
\begin{equation}\label{U:eq:96} \tilde{f}(t,x)=t^2f(t,x), \ \ \forall (t,x)\in \mathbb{B}.\end{equation}
By Lemma \ref{U:lemma3.31}, we can get that $u_m$ satisfies
$$\left(\frac{1}{\vert  B_{\mathbb{B}}({\tilde{R}_{(t,x)}})\vert_{\mathbb{B}} }\int_{B_{\mathbb{B}}({\tilde{R}_{(t,x)}})}u_m^{p_0}\frac{ds}{s}dy\right)^{\frac{1}{p_0}}\leq C\left(\mathop{\inf u_m}\limits_{B_{\mathbb{B}}({\tilde{R}_{(t,x)}})}+\tilde{R}_{(t,x)} {\vert\vert \tilde{f}^-\vert\vert_{ \mathbb{L}^{1}_{n}\left(A \cap B_{\mathbb{B}}({2\tilde{R}_{(t,x)}})\right)}}\right),$$
with $C$ depends on $ n,\ K_0,\ d_0$. 

Since $\mathop{\inf }\limits_{B_{\mathbb{B}}({\tilde{R}_{(t,x)}})}u_m\leq \mathop{\inf }\limits_{A\cap B_{\mathbb{B}}({\tilde{R}_{(t,x)}})}u_m\leq u(t,x)=M-v^+(t,x)$, we obtain
$$\left(\frac{1}{\vert B_{\mathbb{B}}({\tilde{R}_{(t,x)}}) \vert_{\mathbb{B}} }\int_{B_{\mathbb{B}}({\tilde{R}_{(t,x)}})}u_m^{p_0}\frac{ds}{s}dy\right)^{\frac{1}{p_0}}\leq C \left( M-v^+(t,x) +\tilde{R}_{(t,x)}{\vert\vert \tilde{f}^-\vert\vert_{ \mathbb{L}^{1}_{n}\left(\mathbb{B}\cap B_{\mathbb{B}}({2\tilde{R}_{(t,x)}}) \right)}}\right). $$
According to Assumption \ref{U:Assumption1.1}, we have  $\vert B_{\mathbb{B}}({\tilde{R}_{(t,x)}})  \backslash \mathbb{B} \vert_{\mathbb{B}} \geq \sigma \vert B_{\mathbb{B}}({\tilde{R}_{(t,x)}})\vert_{\mathbb{B}}$, and then
$$
\begin{aligned}
\left(\frac{1}{\vert B_{\mathbb{B}}({\tilde{R}_{(t,x)}})\vert_{\mathbb{B}}}\int_{ B_{\mathbb{B}}({\tilde{R}_{(t,x)}})  }u_m^{p_0}\frac{ds}{s}dy\right)^{\frac{1}{p_0}}&\geq\left (\frac{1}{\vert B_{\mathbb{B}}({\tilde{R}_{(t,x)}})\vert_{\mathbb{B}}}\int_{B_{\mathbb{B}}({\tilde{R}_{(t,x)}}) \backslash \mathbb{B} }u_m^{p_0}\frac{ds}{s}dy\right)^{\frac{1}{p_0}}\\&=m \left(\frac{\vert B_{\mathbb{B}}({\tilde{R}_{(t,x)}}) \backslash \mathbb{B} \vert_{\mathbb{B}}}{\vert B_{\mathbb{B}}({\tilde{R}_{(t,x)}})\vert_{\mathbb{B}}}\right)^{\frac{1}{p_0}}\geq m \sigma ^{\frac{1}{p_0}}.
\end{aligned}$$
 Then
 \begin{equation}\label{U:3.3}m \sigma ^{\frac{1}{p_0}} \leq C\left(M-v^+(t,x)+\tilde{R}_{(t,x)} {\vert\vert \tilde{f}^-\vert\vert_{ \mathbb{L}^{1}_{n}\left(\mathbb{B}\cap B_{\mathbb{B}}({2\tilde{R}_{(t,x)}})\right)}}\right).\end{equation}
Since $m=\mathop{\inf}\limits_{\partial A \cap B_{\mathbb{B}}({2\tilde{R}_{(t,x)}})}u$, we have
$ m=M-\mathop{\sup v^+ }\limits_{\partial{A}\cap B_{\mathbb{B}}({2\tilde{R}_{(t,x)}})} \geq M -\mathop{\sup }\limits_{\partial\mathbb{B}}v^+.$ After simplification, \eqref{U:3.3} can be inferred that
\begin{equation}\label{U:equation4.16}
\begin{aligned}
v^+(t,x)\leq& (1-\frac{\sigma ^{\frac{1}{p_0}}}{C})M+\frac{\sigma ^{\frac{1}{p_0}}}{C}\mathop{\sup }\limits_{\partial\mathbb{B}}v^++\tilde{R}_{(t,x)} {\vert\vert \tilde{f}^-\vert\vert_{ \mathbb{L}^{1}_{n}\left(\mathbb{B}\cap B_{\mathbb{B}}({2\tilde{R}_{(t,x)}})\right)}}\\ \leq &(1-\frac{\sigma ^{\frac{1}{p_0}}}{C})M+\frac{\sigma ^{\frac{1}{p_0}}}{C}\mathop{\sup }\limits_{\partial\mathbb{B}}v^++\mathop {\sup }\limits_{\mathbb{B}}\tilde{R}_{(t,x)} {\vert\vert \tilde{f}^-\vert\vert_{ \mathbb{L}^{1}_{n}\left(\mathbb{B}\cap B_{\mathbb{B}}({2\tilde{R}_{(t,x)}})\right)}}.
\end{aligned}
\end{equation}
Taking the supremum over $\mathbb{B}$ on the left-hand side of \eqref{U:equation4.16},  we obtain
\begin{equation}\label{U:eq:97}
\mathop{\sup }\limits_{\mathbb{B}}v^+ \leq\mathop{\sup }\limits_{\partial\mathbb{B}}v^++C\mathop {\sup }\limits_{\mathbb{B}}\tilde{R}_{(t,x)} {\vert\vert \tilde{f}^-\vert\vert_{ \mathbb{L}^{1}_{n}\left(\mathbb{B}\cap B_{\mathbb{B}}({2\tilde{R}_{(t,x)}})\right)}}.
\end{equation}
According to Assumption \ref{U:Assumption1.1}, we have  $\tilde{R}_{(t,x)}\leq K_0 d_0$, and  then
$$
\mathop{\sup }\limits_{\mathbb{B}}v^+ \leq\mathop{\sup}\limits_{\partial\mathbb{B}} v^++C^*K_0d_0 {\vert\vert t^2f^-\vert\vert_{ \mathbb{L}^{1}_{n}\left(\mathbb{B}\right)}},
$$
where $C^*$ depends on $n,\ \sigma,\ K_0, \ d_0$.
 \end{proof}

 \begin{proof}[ \textbf{Proof of Corollary \ref{U:Theorem4.2}}]
  Let $M=\mathop{\sup \limits_{\mathbb{B}} (-v)^+}$, and  $ u=M-(-v)^+\geq0 $. Since $v$ is a viscosity  supersolution  of \eqref{U:eq:11}, we claim that $u$ is the viscosity supersolution of
$$t^{-2}div_ \mathbb{B}(\nabla_ \mathbb{B}u)
     +t^{-2}(n-2)(t\partial_t u)=f^+(t,x),\ \ (t,x)\in\mathbb{B}.$$
Indeed, for all $\varphi\in C^{2}(\mathbb{B})$, if $u-\varphi$ attains a local   minimum in $(t_0,x_0)\in \mathbb{B}$, then for any $(t,x)$ in the  neighborhood of $(t_0,x_0)$, we have
$$ (M-(-v)^+-\varphi)(t,x)= (u-\varphi)(t,x)\geq  (u-\varphi)(t_0,x_0)=(M-(-v)^+-\varphi)(t_0,x_0).$$
If $(-v)^+(t_0,x_0)=0$, then $(M-\varphi)(t,x)\geq (M-\varphi)(t_0,x_0)$.  Thus $\nabla_ \mathbb{B}\varphi(t_0,x_0)=0, \nabla^2_ \mathbb{B}\varphi(t_0,x_0)\leq0 $, and
$$\left(t^{-2}div_ \mathbb{B}(\nabla_ \mathbb{B}\varphi)
     +t^{-2}(n-2)(t\partial_t \varphi)\right)\big|_{t_0,x_0}\leq 0\leq f^+(t_0,x_0);$$
     if $(-v)^+(t_0,x_0)=(-v)(t_0,x_0)\geq 0$, then $ (M+v-\varphi)(t,x) \geq  (M+v-\varphi)(t_0,x_0)$, that is, $(v-\varphi)(t,x)\geq (v-\varphi)(t_0,x_0)$.  Since $v$ is a viscosity  supersolution of \eqref{U:eq:11}, we have
     $$ \left(t^{-2}div_ \mathbb{B}(\nabla_ \mathbb{B}\varphi)
     +t^{-2}(n-2)(t\partial_t \varphi)\right)\big|_{(t_0,x_0)}+h(v(t_0,x_0))\leq f^{+}(t_0,x_0).$$
Since $v(t_0,x_0)\leq 0,$  and  $h(0)=0,$  the non-increasing property of $h$ implies that $h(v(t_0,x_0))\geq 0$.  Then

 $$ \left(t^{-2}div_ \mathbb{B}(\nabla_ \mathbb{B}\varphi)
     +t^{-2}(n-2)(t\partial_t \varphi)\right)\big|_{(t_0,x_0)}\leq f^{+}(t_0,x_0).$$
So we have proved the claim.  Next, similar to Theorem \ref{U:T3} , we can conclude that
$$
\mathop{\sup }\limits_{\mathbb{B}}(-v)^+\leq \mathop{\sup }\limits_{\partial \mathbb {B}}(-v)^++C\mathop {\sup }\limits_{\mathbb{B}}\tilde{R}_{(t,x)} {\vert\vert \tilde{f}^+\vert\vert_{ \mathbb{L}^{1}_{n}\left(\mathbb{B}\cap B_{\mathbb{B}}({2\tilde{R}_{(t,x)}})\right)}},
$$ where $\tilde{f}$ is defined in \eqref{U:eq:96}. Combining \eqref{U:eq:97} we get
\begin{equation}\label{U:eq:98}\begin{aligned}
\mathop{\sup }\limits_{\mathbb{B}}\vert v \vert&\leq \mathop{\sup }\limits_{\partial \mathbb{B}}\vert v \vert+C\mathop {\sup }\limits_{\mathbb{B}}\tilde{R}_{(t,x)} {\vert\vert \tilde{f}\vert\vert_{ \mathbb{L}^{1}_{n}\left(\mathbb{B}\cap B_{\mathbb{B}}({2\tilde{R}_{(t,x)}})\right)}}\\& \leq  \mathop{\sup }\limits_{\partial \mathbb{B}}\vert v \vert+C^*K_0d_0 {\vert\vert t^2f\vert\vert_{ \mathbb{L}^{1}_{n}\left(\mathbb{B}\right)}}.\end{aligned}
\end{equation}
\end{proof}
\section{\textbf{The existence of viscosity  solutions}}
In this section, we  obtain  the  weighted H\"older estimate for the viscosity solutions of  \eqref{U:H8} in Theorem \ref{U:C}. Furthermore, we  establish the existence of viscosity solutions to \eqref{U:H8} in Theorem \ref{U:B} through function convergence and domain approximation.
\begin{lemma}[\cite{DN}, Lemma 8.23]\label{U:lemma3.7}
Let $\omega$ be a non-decreasing function on an interval $(0, r_0]$ satisfying  the inequality
$$\omega(\tau r)\leq \gamma \omega(r)+\sigma(r), \ \forall \ r\leq r_0,$$
where $\sigma$ is also non-decreasing and $0<\gamma, \tau<1$. Then for any $\mu\in(0,1)$ and $r\leq r_0$, we have
$$ \omega(r)\leq C\left((\frac{r}{r_0})^{\beta}\omega (r_0)+\sigma (r^{\mu }r_0^{1-\mu})\right),$$
where $C=C(\gamma,\tau)$ and $\beta=\beta(\gamma,\tau,\mu)$ are positive constants.
\end{lemma}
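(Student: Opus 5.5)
We prove this iteration lemma in the standard way. First we iterate the inequality along the geometric sequence of radii $\tau^k r_1$, where $r_1\le r_0$ is chosen later. Then we interpolate between consecutive radii using monotonicity. Finally we choose $r_1$ in terms of $r$ and $\mu$ so that the two resulting terms balance.

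\textbf{Iteration.} Fix $r_1\le r_0$. Applying the hypothesis with $r=\tau^{k-1}r_1,\tau^{k-2}r_1,\dots,r_1$ gives
\begin{equation*}
\omega(\tau^k r_1)\le \gamma^k\omega(r_1)+\sum_{i=0}^{k-1}\gamma^i\sigma(\tau^{k-1-i}r_1)\le \gamma^k\omega(r_1)+\frac{\sigma(r_1)}{1-\gamma}.
\end{equation*}
The second inequality uses that $\sigma$ is non-decreasing, so $\sigma(\tau^{j}r_1)\le\sigma(r_1)$. Since $\omega$ is also non-decreasing, $\omega(r_1)\le\omega(r_0)$.

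\textbf{Interpolation.} Take $r\le r_1$ and pick $k\ge0$ with $\tau^{k+1}r_1< r\le\tau^k r_1$. Monotonicity gives $\omega(r)\le\omega(\tau^k r_1)$. Moreover $\gamma^k=\gamma^{-1}(\tau^{k+1})^{\log\gamma/\log\tau}\le \gamma^{-1}(r/r_1)^{\alpha}$, where $\alpha=\log\gamma/\log\tau>0$. Hence
\begin{equation*}
\omega(r)\le \gamma^{-1}\Big(\frac{r}{r_1}\Big)^{\alpha}\omega(r_0)+\frac{\sigma(r_1)}{1-\gamma}\qquad(r\le r_1).
\end{equation*}

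\textbf{Choice of $r_1$.} Given $r\le r_0$, set $r_1=r^{\mu}r_0^{1-\mu}$. Then $r\le r_1\le r_0$, and $r/r_1=(r/r_0)^{1-\mu}$. Substituting gives
\begin{equation*}
\omega(r)\le C\Big(\big(\tfrac{r}{r_0}\big)^{\beta}\omega(r_0)+\sigma(r^{\mu}r_0^{1-\mu})\Big),
\end{equation*}
with $\beta=(1-\mu)\log\gamma/\log\tau$ and $C=\max\{\gamma^{-1},(1-\gamma)^{-1}\}$, which depends only on $\gamma$ (and trivially on $\tau$).

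The only delicate point is the bookkeeping in the interpolation step. One must make sure every radius used in the iteration stays in $(0,r_0]$, which holds because each is at most $r_1\le r_0$. One must also keep the exponent positive, which holds since $\gamma,\tau\in(0,1)$.
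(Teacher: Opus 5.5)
Your proof is correct and is essentially the standard argument of Gilbarg--Trudinger, Lemma 8.23, which the paper cites without giving a proof of its own: iterate along $\tau^k r_1$, interpolate using $\gamma^k\le\gamma^{-1}(r/r_1)^{\log\gamma/\log\tau}$, and choose $r_1=r^{\mu}r_0^{1-\mu}$. Like the source, you tacitly use $\sigma\ge 0$ and $\omega(r_0)\ge 0$, namely in bounding $\sum_i\gamma^i\sigma(r_1)$ by $\sigma(r_1)/(1-\gamma)$ and in replacing $\gamma^k$ by the larger power of $r/r_1$; both conditions hold in the paper's application, where $\omega$ is an oscillation and $\sigma$ is a nonnegative error term.
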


\begin{proof}[ \textbf{Proof of Theorem \ref{U:C}}]
For fixed $(t,x)$ as in Theorem \ref{U:T3}, we reset $M=\mathop{\sup }\limits_{\mathbb{B}\cap B_{\mathbb{B}}({2\tilde{R}_{(t,x)}})}v^+$, where $\tilde{R}_{(t,x)}$ is  defined  in Assumption \ref{U:Assumption1.1}. Since $v$ is the viscosity solution of \eqref{U:H8}, similar to \eqref{U:equation4.16}, we find that
 $$v^+(t,x)\leq (1-\frac{\sigma ^{\frac{1}{p_0}}}{C})M+\frac{\sigma ^{\frac{1}{p_0}}}{C}\mathop{\sup }\limits_{\partial\mathbb{B}}v^++\tilde{R}_{(t,x)} {\vert\vert \tilde{f}^-\vert\vert_{\mathbb{L}^{1}_{n}\left(\mathbb{B}\cap B_{\mathbb{B}}({2\tilde{R}_{(t,x)}})\right)}}$$
 in $\mathbb{B}\cap B_{\mathbb{B}}({2\tilde{R}_{(t,x)}})$ with $\tilde{f}$ defined in \eqref{U:eq:96}.
 Since $v\big|_{\partial{\mathbb{B}}}=0$ and $M=\mathop{\sup }\limits_{\mathbb{B}\cap B_{\mathbb{B}}({2\tilde{R}_{(t,x)}})}v^+$,  we get
  $$v^+(t,x)\leq (1-\frac{\sigma ^{\frac{1}{p_0}}}{C})\mathop{\sup }\limits_{\mathbb{B}\cap B_{\mathbb{B}}({2\tilde{R}_{(t,x)}})}v^++\tilde{R}_{(t,x)} {\vert\vert \tilde{f}^-\vert\vert_{\mathbb{L}^{1}_{n}\left(\mathbb{B}\cap B_{\mathbb{B}}({2\tilde{R}_{(t,x)}})\right)}}.$$
  Multiply both sides of the equation by $\frac{1}{d_{\partial\mathbb{B}}^{\alpha}{(t,x)}+\delta}$ for any $\delta>0$, where $d_{\partial\mathbb{B}}(t,x)$ defined in \eqref{U:eq:85},
 and  then we have
   $$
\frac{v^+(t,x)}{d_{\partial\mathbb{B}}^{\alpha}{(t,x)}+\delta}\leq (1-\frac{1}{C}\sigma ^{\frac{1}{p_0}})\frac{\mathop{\sup }\limits_{\mathbb{B}\cap B_{\mathbb{B}}({2\tilde{R}_{(t,x)}})}v^+}{d_{\partial\mathbb{B}}^{\alpha}{(t,x)}+\delta}+\frac{\tilde{R}_{(t,x)} {\vert\vert \tilde{f}^-\vert\vert_{\mathbb{L}^{1}_{n}\left(\mathbb{B}\cap B_{\mathbb{B}}({2\tilde{R}_{(t,x)}})\right)}}}{d_{\partial\mathbb{B}}^{\alpha}{(t,x)}+\delta}
.$$
Since $v=0 \ \text{on} \ \partial{\mathbb{B}}$,   we can find a $(t^*,x^*)\in \mathbb{B}\cap B_{\mathbb{B}}({2\tilde{R}_{(t,x)}})$ such that
$$\frac{v^+(t,x)}{d_{\partial\mathbb{B}}^{\alpha}{(t,x)}+\delta}\leq (1-\frac{1}{C}\sigma ^{\frac{1}{p_0}})\frac{v^+(t^*,x^*)}{d_{\partial\mathbb{B}}^{\alpha}{(t,x)}+\delta}+\frac{\tilde{R}_{(t,x)} {\vert\vert \tilde{f}^-\vert\vert_{\mathbb{L}^{1}_{n}\left(\mathbb{B}\cap B_{\mathbb{B}}({2\tilde{R}_{(t,x)}})\right)}}}{d_{\partial\mathbb{B}}^{\alpha}{(t,x)}+\delta}
.$$
Since $d_{\partial\mathbb{B}}{(t^*,x^*)}\leq 3\tilde{R}_{(t,x)}$, $\tilde{R}_{(t,x)}\leq K_0d_{\partial\mathbb{B}}{(t,x)}$,  and $K_0\geq1$ , we obtain
$$\begin{aligned}
 \frac{v^+(t,x)}{d_{\partial\mathbb{B}}^{\alpha}{(t,x)}+\delta}\leq & (1-\frac{1}{C}\sigma ^{\frac{1}{p_0}})(3{K_0})^{\alpha}\frac{v^+(t^*,x^*)}{d_{\partial\mathbb{B}}^{\alpha}{(t^*,x^*)}+\delta}+\frac{\tilde{R}_{(t,x)} {\vert\vert \tilde{f}^-\vert\vert_{\mathbb{L}^{1}_{n}\left(\mathbb{B}\cap B_{\mathbb{B}}({2\tilde{R}_{(t,x)}})\right)}}}{d_{\partial\mathbb{B}}^{\alpha}{(t,x)}+\delta}
 \\ \leq &  (1-\frac{1}{C}\sigma ^{\frac{1}{p_0}})(3{K_0})^{\alpha} \mathop{\sup}\limits_{\mathbb{B}}\frac{v^+(t,x)}{d_{\partial\mathbb{B}}^{\alpha}{(t,x)}+\delta}+\mathop{\sup}\limits_{\mathbb{B}}\frac{\tilde{R}_{(t,x)} {\vert\vert \tilde{f}^-\vert\vert_{\mathbb{L}^{1}_{n}\left(\mathbb{B}\cap B_{\mathbb{B}}({2\tilde{R}_{(t,x)}})\right)}}}{d_{\partial\mathbb{B}}^{\alpha}{(t,x)}+\delta}.
 \end{aligned}
$$

Take the supremum over $\mathbb{B}$ on the left-hand side of the above equation, and  then we get
$$\mathop{\sup}\limits_{\mathbb{B}}\frac{v^+(t,x)}{d_{\partial\mathbb{B}}^{\alpha}{(t,x)}+\delta}  \leq  (1-\frac{1}{C}\sigma ^{\frac{1}{p_0}})(3{K_0})^{\alpha} \mathop{\sup}\limits_{\mathbb{B}}\frac{v^+(t,x)}{d_{\partial\mathbb{B}}^{\alpha}{(t,x)}+\delta}  +\mathop{\sup}\limits_{\mathbb{B}}\frac{\tilde{R}_{(t,x)} {\vert\vert \tilde{f}^-\vert\vert_{\mathbb{L}^{1}_{n}\left(\mathbb{B}\cap B_{\mathbb{B}}({2\tilde{R}_{(t,x)}})\right)}}}{d_{\partial\mathbb{B}}^{\alpha}{(t,x)}+\delta}  .
 $$

  For all $\alpha$ such that $(1-\frac{1}{C}\sigma ^{\frac{1}{p_0}})(3{K_0})^{\alpha}< 1$ and $\alpha\leq 1$, we have
  $$\mathop{\sup}\limits_{\mathbb{B}}\frac{v^+(t,x)}{d_{\partial\mathbb{B}}^{\alpha}{(t,x)}+\delta}  \leq C'K_0\mathop{\sup}\limits_{\mathbb{B}}d_{\partial\mathbb{B}}^{1-\alpha} (t,x) {\vert\vert \tilde{f}^-\vert\vert_{\mathbb{L}^{1}_{n}\left(\mathbb{B}\cap B_{\mathbb{B}}({2\tilde{R}_{(t,x)}})\right)}}.$$
   As $\delta\to 0$,  we have
    $$
\mathop{\sup}\limits_{\mathbb{B}}\frac{v^+(t,x)}{d_{\partial\mathbb{B}}^{\alpha}{(t,x)}}  \leq C'K_0\mathop{\sup}\limits_{\mathbb{B}}d_{\partial\mathbb{B}}^{1-\alpha} (t,x) {\vert\vert \tilde{f}^-\vert\vert_{\mathbb{L}^{1}_{n}\left(\mathbb{B}\cap B_{\mathbb{B}}({2\tilde{R}_{(t,x)}})\right)}},$$
where $C'$ depends on $n,\ \sigma,\ K_0,\ d_0,\ \alpha$.
    Similarly, as in the proof of Corollary \ref{U:Theorem4.2}, we perform a similar operation on $-v$ and obtain
$$
\mathop{\sup}\limits_{\mathbb{B}}\frac{v^-(t,x)}{d_{\partial\mathbb{B}}^{\alpha}{(t,x)}}\leq C'K_0\mathop{\sup}\limits_{\mathbb{B}}d_{\partial\mathbb{B}}^{1-\alpha} (t,x) {\vert\vert \tilde{f}^+\vert\vert_{\mathbb{L}^{1}_{n}\left(\mathbb{B}\cap B_{\mathbb{B}}({2\tilde{R}_{(t,x)}})\right)}},
$$
that is,
\begin{equation}\label{U:equation4.19}
\mathop{\sup}\limits_{\mathbb{B}}\frac{|v(t,x)|}{d_{\partial\mathbb{B}}^{\alpha}{(t,x)}}\leq C'K_0\mathop{\sup}\limits_{\mathbb{B}}d_{\partial\mathbb{B}}^{1-\alpha} (t,x) {\vert\vert \tilde{f}\vert\vert_{\mathbb{L}^{1}_{n}\left(\mathbb{B}\cap B_{\mathbb{B}}({2\tilde{R}_{(t,x)}})\right)}}.
\end{equation}

Let $B_{\mathbb{B}}(\tilde{r})$ be a ball of radius $\tilde{r} $ centered at $ (t,x)$. We set
    \begin{equation}
        M_{\tilde{r}}=\mathop{\sup}\limits_{\mathbb B\cap B_{\mathbb{B}}(\tilde{r})}v,\ m_{\tilde{r}}=\mathop{\inf}\limits_{\mathbb B\cap B_{\mathbb{B}}(\tilde{r}) }v,\ \omega({\tilde{r}})=M_{\tilde{r}}-m_{\tilde{r}}.
    \end{equation}
Next, we will consider three different cases.

\noindent$(1)$  If $\tilde{r}\leq 1$, and $\tilde{r}\geq\frac{1}{2}d_{\partial\mathbb{B}}{(t,x)}$, then for $(s,y)\in B_{\mathbb{B}}(\tilde{r})$, we have
    $$ d_{\partial\mathbb{B}}{(s,y)}\leq d_{\partial\mathbb{B}}{(t,x)}+d_{\mathbb{B}}\left( (t,x),(s,y)\right)\leq 3\tilde{r}.$$
    Since $v\in C(\mathbb{B})$, and $v=0 \ \text{on} \ \partial{\mathbb{B}}$,  there exists a $(s_0,y_0)\in \mathbb B\cap B_{\mathbb{B}}(\tilde{r})$ such that
    $$\mathop{\sup}\limits_{\mathbb B\cap B_{\mathbb{B}}(\tilde{r})}|v|= |v(s_0,y_0)|.$$
      Therefore by \eqref{U:equation4.19}, we have
\begin{equation}\label{U:3.136}
    \begin{aligned}
        \omega(\tilde{r})&=\mathop{\sup}\limits_{\mathbb B\cap B_{\mathbb{B}}(\tilde{r}) }v-\mathop{\inf}\limits_{\mathbb B\cap B_{\mathbb{B}}(\tilde{r}) }v\leq 2\mathop{\sup}\limits_{\mathbb B\cap B_{\mathbb{B}}(\tilde{r})}|v|
         \leq 2|v(s_0,y_0)|\\&\leq 2\left(\frac{3\tilde{r} }{d_{\partial\mathbb{B}}{(s_0,y_0)}}\right)^{\alpha}|v(s_0,y_0)|= 2\cdot 3^{\alpha}\tilde{r}^{\alpha}\frac{|v(s_0,y_0)| }{d_{\partial\mathbb{B}}^{\alpha}{(s_0,y_0)}}\\&\leq
       2\cdot 3^{\alpha}\tilde{r}^{\alpha}\mathop{\sup}\limits_{\mathbb B\cap B_{\mathbb{B}}(\tilde{r})}\frac{|v(s,y)| }{d_{\partial\mathbb{B}}^{\alpha}{(s,y)}}
      \leq 2\cdot 3^{\alpha} C'K_0\tilde{r}^{\alpha}\mathop{\sup}\limits_{\mathbb{B}}d_{\partial\mathbb{B}}^{1-\alpha} (t,x) {\vert\vert \tilde{f}\vert\vert_{\mathbb{L}^{1}_{n}\left(\mathbb{B}\cap B_{\mathbb{B}}({2\tilde{R}_{(t,x)}})\right)}}.
        \end{aligned}
    \end{equation}

     \noindent $(2)$ If $\tilde{r}\leq 1$, and $\tilde{r}< \frac{1}{2}d_{\partial\mathbb{B}}{(t,x)} $,
then $B_{\mathbb{B}}(\tilde{r}/2) \subset B_{\mathbb{B}}(\tilde{r})\Subset \mathbb{B}$.
Let \begin{equation}\label{U:eq:118}u_1=v-m_{\tilde{r}}.\end{equation}    For all $\varphi\in C^{2}(\mathbb{B})$, if $u_1-\varphi$ attains a local  minimum in $(t_0,x_0)\in \mathbb{B}$, then  $v-\varphi$ attains a local   minimum $(t_0,x_0)$. Since $v$ is a viscosity solution of \eqref{U:H8}, we have
     $$ \left(t^{-2}div_ \mathbb{B}(\nabla_ \mathbb{B}\varphi)
     +t^{-2}(n-2)(t\partial_t \varphi)\right)\big|_{(t_0,x_0)}+h(v(t_0,x_0))\leq f(t_0,x_0).$$
Since $\mathop{\sup }\limits_{\partial \mathbb{B}}v=0$ and  $h(\cdot)$ is  non-increasing and continuous, by \eqref{U:1.7}, we get $$|h(v)|\leq h(-C^*K_0d_0 {\vert\vert t^2f\vert\vert_{\mathbb{L}^{1}_{n}\left(\mathbb{B}\right)}}),$$ and  then
 $$ \left(t^{-2}div_ \mathbb{B}(\nabla_ \mathbb{B}\varphi)
     +t^{-2}(n-2)(t\partial_t \varphi)\right)\big|_{(t_0,x_0)}\leq f(t_0,x_0)+h(-C^*K_0d_0 {\vert\vert t^2f\vert\vert_{\mathbb{L}^{1}_{n}\left(\mathbb{B}\right)}}).$$
 It implies that $u_1=v-m_{\tilde{r}}$ is a non-negative viscosity supersolution of $$t^{-2}div_ \mathbb{B}(\nabla_ \mathbb{B}u_1)
     +t^{-2}(n-2)(t\partial_t u_1)=f(t,x)+h(-C^*K_0d_0 {\vert\vert t^2f\vert\vert_{\mathbb{L}^{1}_{n}\left(\mathbb{B}\right)}}). $$
Let $u_2=M_{\tilde{r}}-v$. Similarly, we have that $u_2$ is  a non-negative viscosity supersolution of
$$t^{-2}div_ \mathbb{B}(\nabla_ \mathbb{B}u_2)
     +t^{-2}(n-2)(t\partial_t u_2)=-f(t,x)+h(-C^*K_0d_0 {\vert\vert  t^2f\vert\vert_{\mathbb{L}^{1}_{n}\left(\mathbb{B}\right)}}). $$

We apply  $u_1=v-m_{\tilde{r}}$ and  $u_2=M_{\tilde{r}}-v$ to the Lemma \ref{U:lemma3.5} and  get

$$\begin{aligned}
&\left(\frac{1}{\vert B_{\mathbb{B}}({\tilde{r}/2})\vert_{\mathbb{B}} }\int_{B_{\mathbb{B}}({\tilde{r}/2})}(v-m_{\tilde{r}})^{p_0}\frac{dt}{t}dx\right)^{\frac{1}{p_0}} \\ \leq &C\left(\mathop{\inf }\limits_{B_{\mathbb{B}}({\tilde{r}/2})} (v-m_{\tilde{r}})+\frac{\tilde{r}}{2} {\vert\vert t^2f^+\vert\vert_{\mathbb{L}^{1}_{n}\left( B_{\mathbb{B}}({\tilde{r}}) \right)}}+\frac{\tilde{r}^2}{2}h(-C^*K_0d_0 {\vert\vert t^2f\vert\vert_{\mathbb{L}^{1}_{n}\left(\mathbb{B}\right)}})\right).\end{aligned}$$
$$\begin{aligned}
&\left(\frac{1}{\vert B_{\mathbb{B}}({\tilde{r}/2})\vert_{\mathbb{B}} }\int_{B_{\mathbb{B}}({\tilde{r}/2})}(M_{\tilde{r}}-v)^{p_0}\frac{dt}{t}dx\right)^{\frac{1}{p_0}} \\ \leq &C\left(\mathop{\inf }\limits_{B_{\mathbb{B}}({\tilde{r}/2})} (M_{\tilde{r}}-v)+\frac{\tilde{r}}{2} {\vert\vert t^2f^-\vert\vert_{\mathbb{L}^{1}_{n}\left( B_{\mathbb{B}}({\tilde{r}}) \right)}}+\frac{\tilde{r}^2}{2}h(-C^*K_0d_0 {\vert\vert t^2f\vert\vert_{\mathbb{L}^{1}_{n}\left(\mathbb{B}\right)}})\right),\end{aligned}$$

It is known that  $ \vert B_{\mathbb{B}}({\tilde{R}_{(t,x)}})  \backslash \mathbb{B} \vert_{{\mathbb{B}}} \geq \sigma \vert B_{\mathbb{B}}({\tilde{R}_{(t,x)}})\vert_{\mathbb{B}}$ from Assumption \ref{U:Assumption1.1}, and then $2\tilde{R}_{(t,x)}> d_{\partial\mathbb{B}}{(t,x)}$. Also, because $\tilde{r}< \frac{1}{2}d_{\partial\mathbb{B}}{(t,x)}$, we have $\tilde{r}< \tilde{R}_{(t,x)}$. Since $(t,x)$ is the center of  $B_{\mathbb{B}}({\tilde{r}})$, $B_{\mathbb{B}}({\tilde{r}})\subset \mathbb{B}$ and $(t,x)\in B_{\mathbb{B}}({\tilde{R}_{(t,x)}})$, we have $B_{\mathbb{B}}({\tilde{r}})\subset  \mathbb{B}\cap B_{\mathbb{B}}({2\tilde{R}_{(t,x)}})$.
Adding the two equations above gives
$$ M_{\tilde{r}}-m_{\tilde{r}}\leq C\left(M_{\tilde{r}}-M_{\tilde{r}/2}+m_{\tilde{r}/2}-m_{\tilde{r}}+\tilde{r} {\vert\vert \tilde{f}\vert\vert_{\mathbb{L}^{1}_{n}\left(\mathbb{B}\cap B_{\mathbb{B}}({2\tilde{R}_{(t,x)}})\right)}}+{\tilde{r}^2}h(-C^*K_0d_0 {\vert\vert \tilde{f}\vert\vert_{\mathbb{L}^{1}_{n}\left(\mathbb{B}\right)}})\right)$$ with $\tilde{f}$ defined in \eqref{U:eq:96}.
 Transposing terms   gives
\begin{equation}\label{U:3.20}
\begin{aligned}
\omega({\tilde{r}}/ 2) & \leq \frac{C-1}{C} \omega({\tilde{r}})+ {\tilde{r}}^{\alpha}d_{\partial\mathbb{B}}^{1-\alpha}(t,x)  {\vert\vert \tilde{f}\vert\vert_{\mathbb{L}^{1}_{n}\left(\mathbb{B}\cap B_{\mathbb{B}}({2\tilde{R}_{(t,x)}})\right)}} +{\tilde{r}^{\alpha}}h(-C^*K_0d_0 {\vert\vert \tilde{f}\vert\vert_{\mathbb{L}^{1}_{n}\left(\mathbb{B}\right)}})\\
&  \leq \frac{C-1}{C} \omega({\tilde{r}})+ {\tilde{r}}^{\alpha}\mathop{\sup}\limits_{\mathbb{B}} d_{\partial\mathbb{B}}^{1-\alpha}(t,x)  {\vert\vert \tilde{f}\vert\vert_{\mathbb{L}^{1}_{n}\left(\mathbb{B}\cap B_{\mathbb{B}}({2\tilde{R}_{(t,x)}})\right)}}+{\tilde{r}^{\alpha}}h(-C^*K_0d_0 {\vert\vert \tilde{f}\vert\vert_{\mathbb{L}^{1}_{n}\left(\mathbb{B}\right)}}).
\end{aligned}
\end{equation}
From \eqref{U:3.136}, for  $\frac{1}{2}d_{\partial\mathbb{B}}{(t,x)}\leq\tilde{r}\leq 1$,  we have
$$ \begin{aligned}
\omega({\tilde{r}}/ 2) &\leq\omega({\tilde{r}}) \leq \frac{C-1}{C}\omega({\tilde{r}})+\frac{1}{C}\omega({\tilde{r}}) \\
&\leq \frac{C-1}{C}\omega({\tilde{r}})+ \frac{2}{C} \cdot 3^{\alpha} C'K_0\tilde{r}^{\alpha}\mathop{\sup}\limits_{\mathbb{B}}d_{\partial\mathbb{B}}^{1-\alpha} (t,x) {\vert\vert \tilde{f}\vert\vert_{\mathbb{L}^{1}_{n}\left(\mathbb{B}\cap B_{\mathbb{B}}({2\tilde{R}_{(t,x)}})\right)}}.
\end{aligned}$$
Combining   \eqref{U:3.20} and $\alpha\leq 1$, for all $\tilde{r}\leq 1$ we obtain
\begin{equation}\label{U:3.17}
\omega({\tilde{r}}/ 2)  \leq C_1 \omega({\tilde{r}})+C_2 {\tilde{r}}^{\alpha}\left(\mathop{\sup}\limits_{\mathbb{B}}d_{\partial\mathbb{B}}^{1-\alpha} (t,x) {\vert\vert \tilde{f}\vert\vert_{\mathbb{L}^{1}_{n}\left(\mathbb{B}\cap B_{\mathbb{B}}({2\tilde{R}_{(t,x)}})\right)}} +h(-C^*K_0d_0 {\vert\vert \tilde{f}\vert\vert_{\mathbb{L}^{1}_{n}\left(\mathbb{B}\right)}})\right)\end{equation}
with  $C_1=\frac{C-1}{C} $ and  $C_2=\max\{1,(2\cdot 3^{\alpha} C'K_0)/C\}.$

By \eqref{U:equation4.19} we get
$$ \begin{aligned}
\omega{(1)}&\leq\mathop{\sup}\limits_{\mathbb B\cap B_{\mathbb{B}}(1) }v-\mathop{\inf}\limits_{\mathbb B\cap B_{\mathbb{B}}(1) }v\leq 2\mathop{\sup}\limits_{\mathbb B\cap B_{\mathbb{B}}(1)}|v|\leq 2 {d_0}^{\alpha}\mathop{\sup}\limits_{\mathbb B\cap B_{\mathbb{B}}(1)}\frac{|v(s,y)| }{d_{\partial\mathbb{B}}^{\alpha}{(s,y)}}\\&\leq 2C'K_0d_0^{\alpha}\mathop{\sup}\limits_{\mathbb{B}}d_{\partial\mathbb{B}}^{1-\alpha} (t,x) {\vert\vert \tilde{f}\vert\vert_{\mathbb{L}^{1}_{n}\left(\mathbb{B}\cap B_{\mathbb{B}}({2\tilde{R}_{(t,x)}})\right)}}.
\end{aligned}$$
By  \eqref{U:3.17} and   Lemma \ref{U:lemma3.7},   we obtain
\begin{equation}\label{U:3.23}
\begin{aligned}
\omega(\tilde{r}) &\leq C_3\left(\omega(1) \tilde{r}^\beta+ C_2 {\tilde{r}^{\alpha\mu}}\left(\mathop{\sup}\limits_{\mathbb{B}}d_{\partial\mathbb{B}}^{1-\alpha} (t,x) {\vert\vert \tilde{f}\vert\vert_{\mathbb{L}^{1}_{n}\left(\mathbb{B}\cap B_{\mathbb{B}}({2\tilde{R}_{(t,x)}}) \right)}} +h(-C^*K_0d_0 {\vert\vert \tilde{f}\vert\vert_{\mathbb{L}^{1}_{n}\left(\mathbb{B}\right)}})\right)\right)
\\&\leq
C_3\left(\mathop{\sup}\limits_{\mathbb{B}}d_{\partial\mathbb{B}}^{1-\alpha} (t,x) {\vert\vert \tilde{f}\vert\vert_{\mathbb{L}^{1}_{n}\left(\mathbb{B}\cap B_{\mathbb{B}}({2\tilde{R}_{(t,x)}})\right)}}+h(-C^*K_0d_0 {\vert\vert \tilde{f}\vert\vert_{\mathbb{L}^{1}_{n}\left(\mathbb{B}\right)}})\right)\left( 2C'K_0d_0^{\alpha}\tilde{r}^\beta+ C_2 {\tilde{r}^{\alpha\mu}}\right)\\&
\leq C_4 \tilde{r}^{\rho}\left(\mathop{\sup}\limits_{\mathbb{B}}d_{\partial\mathbb{B}}^{1-\alpha} (t,x) {\vert\vert \tilde{f}\vert\vert_{\mathbb{L}^{1}_{n}\left(\mathbb{B}\cap B_{\mathbb{B}}({2\tilde{R}_{(t,x)}})\right)}}+h(-C^*K_0d_0 {\vert\vert \tilde{f}\vert\vert_{\mathbb{L}^{1}_{n}\left(\mathbb{B}\right)}})\right)
\end{aligned}
\end{equation}
for any $\mu \in(0,1)$ and $\rho\leq \min\{{\alpha}\mu,\beta\}$, where $C_3=C_3(C_1)$, $\beta=\beta(C_1, \mu)$ and $C_4=C_3\cdot \max\{2C'K_0d_0^{\alpha},C_2\}$.

\noindent $(3)$ If $\tilde{r}> 1$, similar to \eqref{U:3.136}, by \eqref{U:equation4.19} we have
\begin{equation}\label{U:3.16}
    \begin{aligned}
        \omega(\tilde{r})&=\mathop{\sup}\limits_{\mathbb B\cap B_{\mathbb{B}}(\tilde{r})}v-\mathop{\inf}\limits_{\mathbb B\cap  B_{\mathbb{B}}(\tilde{r}) }v\leq 2\mathop{\sup}\limits_{\mathbb B\cap B_{\mathbb{B}}(\tilde{r})}|v|\leq
       2\cdot {d_0}^{\alpha}\tilde{r}^{\rho}\mathop{\sup}\limits_{\mathbb B\cap  B_{\mathbb{B}}(\tilde{r}) }\frac{|v(s,y)| }{d_{\partial\mathbb{B}}^{\alpha}{(s,y)}}
       \\&\leq 2\cdot C'K_0d_0^{\alpha}\tilde{r}^{\rho}\mathop{\sup}\limits_{\mathbb{B}}d_{\partial\mathbb{B}}^{1-\alpha} (t,x) {\vert\vert \tilde{f}\vert\vert_{\mathbb{L}^{1}_{n}\left(\mathbb{B}\cap B_{\mathbb{B}}({2\tilde{R}_{(t,x)}})\right)}}.
        \end{aligned}
    \end{equation}
 Recalling $\tilde{R}_{(t,x)}\leq K_0d_{\partial\mathbb{B}}{(t,x)}$ and  $d_{\partial\mathbb{B}}(t,x)\leq d_0$, together with  \eqref{U:eq:98}, we  get the interior  H\"older estimate
     \begin{equation}\label{U:1.8}
     ||v(t,x)||_{\rho,\mathbb B}\leq C\left(\mathop{\sup}\limits_{\mathbb{B}}d_{\partial\mathbb{B}}^{1-\alpha} (t,x) {\vert\vert \tilde{f}\vert\vert_{\mathbb{L}^{1}_{n}\left(\mathbb{B}\cap B_{\mathbb{B}}({2\tilde{R}_{(t,x)}})\right)}} +h(-C^*K_0d_0 {\vert\vert \tilde{f}\vert\vert_{\mathbb{L}^{1}_{n}\left(\mathbb{B}\right)}})\right).\end{equation}

Next,  when $(t,x)\in \partial \mathbb{B}$ and $(s,y)\in \partial\mathbb{B}$,
we have $$ \frac{|v(t, x)-v(s, y)|}{|(t, x)-(s, y)|_{\mathbb{B}}^\rho} =0.$$
    Finally, when $(t,x)\in \mathbb{B}$ and $(s,y)\in \partial\mathbb{B}$,
   we can choose $\left(s^{\prime}, y^{\prime}\right) \in \mathbb{B}$ such that $d_{\partial\mathbb{B}}\left(s^{\prime}, y^{\prime}\right) \leq 1$ and $\left(s^{\prime}, y^{\prime}\right)\in [(t,x),(s,y)]$, the line segment between $(t,x)$ and $(s,y)$.
 Hence we have
$$
\begin{aligned}
\frac{|v(t, x)-v(s, y)|}{|(t, x)-(s, y)|_{\mathbb{B}}^\rho} \leqslant \frac{\mid v(t, x)-v\left(s^{\prime}, y^{\prime} \right)\mid}{|(t, x)-(s, y)|_{\mathbb{B}}^\rho}+\frac{\left|v\left(s^{\prime}, y^{\prime}\right)-0\right|}{|(t, x)-(s, y)|_{\mathbb{B}}^\rho} .
\end{aligned}
$$
According  to  the interior  H\"older estimate \eqref{U:1.8}, we get
$$\frac{\mid v(t, x)-v\left(s^{\prime}, y^{\prime}\right) \mid}{|(t, x)-(s, y)|_{\mathbb{B}}^\rho} \leq C\left(\mathop{\sup}\limits_{\mathbb{B}}d_{\partial\mathbb{B}}^{1-\alpha} (t,x) {\vert\vert \tilde{f}\vert\vert_{\mathbb{L}^{1}_{n}\left(\mathbb{B}\cap B_{\mathbb{B}}({2\tilde{R}_{(t,x)}})\right)}} +h(-C^*K_0d_0 {\vert\vert \tilde{f}\vert\vert_{\mathbb{L}^{1}_{n}\left(\mathbb{B}\right)}})\right).$$
Based on \eqref{U:equation4.19}, $\rho \leq \alpha$ and
 $d_{\partial\mathbb{B}}\left(s^{\prime}, y^{\prime}\right)  \leq 1 $, we deduce that $$\begin{aligned}
 \frac{\left|v\left(s^{\prime}, y^{\prime}\right)-0\right|}{|(t, x)-(s, y)|_{\mathbb{B}}^\rho}&\leq
\left|\frac{v\left(s^{\prime}, y^{\prime}\right)}{d_{\partial\mathbb{B}}^\rho\left(s^{\prime}, y^{\prime}\right)}\right|\leq\left|\frac{v\left(s^{\prime}, y^{\prime}\right)}{d_{\partial\mathbb{B}}^\alpha\left(s^{\prime}, y^{\prime}\right)}\right|
\leq C' K_0\mathop{\sup}\limits_{\mathbb{B}}d_{\partial\mathbb{B}}^{1-\alpha} (t,x) {\vert\vert \tilde{f}\vert\vert_{\mathbb{L}^{1}_{n}\left(\mathbb{B}\cap B_{\mathbb{B}}({2\tilde{R}_{(t,x)}})\right)}}.
\end{aligned}
$$

In conclusion, we  get the global H\"older estimate
    $$\begin{aligned}
    ||v(t,x)||_{\rho,\overline{\mathbb B}}&\leq C\left(\mathop{\sup}\limits_{\mathbb{B}}d_{\partial\mathbb{B}}^{1-\alpha} (t,x) {\vert\vert \tilde{f}\vert\vert_{\mathbb{L}^{1}_{n}\left(\mathbb{B}\cap B_{\mathbb{B}}({2\tilde{R}_{(t,x)}})\right)}} +h(-C^*K_0d_0 {\vert\vert \tilde{f}\vert\vert_{\mathbb{L}^{1}_{n}\left(\mathbb{B}\right)}})\right)\\&\leq C\left( {\vert\vert t^2f\vert\vert_{\mathbb{L}^{1}_{n}\left(\mathbb{B}\right)}} +h(-C^*K_0d_0 {\vert\vert t^2f\vert\vert_{\mathbb{L}^{1}_{n}\left(\mathbb{B}\right)}})\right).
    \end{aligned}$$
\end{proof}

Next, we present the existing results in \cite{IF22} and \cite{HP2}.

\begin{lemma}[\cite{IF22}, Proposition 2,3]\label{U:lemma3.8}
Suppose that $f\in L^{\infty}(\Omega)$  and $\tilde{h}: \Omega \times \mathbb{R}\to \mathbb{R}$  such that  $\tilde{h}(x,\cdot)$ is non-increasing and continuous. Then there exists  a viscosity subsolution $u_1$  and  a viscosity supersolution $u_2$ of \begin{equation}\label{U:eq:104}
    \begin{cases}
G(x,\nabla u,\nabla^2u)+\tilde{h}(x,u)=f  \ \ \ & in\  \Omega,\\
u=0 \ & on \ \partial{\Omega}.
\end{cases}
\end{equation}  as follows:
$$u_1=L_{1}\left(\left(1+d(x,\partial \Omega)\right)^{-k_{1}}-1\right), \ u_2=L_{2}\left(1-\left(1+d(x,\partial \Omega)\right)^{-k_{2}}\right).$$
Here $d(x,\partial \Omega):=\inf\{|x-y|, y\in \partial \Omega\}$ and $L_{1}, L_{2},k_{1},k_{2}$ are some constants.

\end{lemma}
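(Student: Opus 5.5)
The statement is a barrier construction from \cite{IF22}: for the Dirichlet problem \eqref{U:eq:104} we must exhibit explicit functions of the distance $d=d(x,\partial\Omega)$ that are a viscosity subsolution and a supersolution. The plan is to verify the inequalities directly, pointwise, using the concavity-type structure of $\psi_k(d)=1-(1+d)^{-k}$. We assume, as in \cite{IF22}, that $G$ is uniformly elliptic in the Pucci sense, $\mathcal{M}^-_{\lambda,\Lambda}(X)-b|p|\le G(x,p,X)\le \mathcal{M}^+_{\lambda,\Lambda}(X)+b|p|$, with $G(x,0,0)=0$, and that $\Omega$ is bounded. Since $d$ is only Lipschitz in general, we work with viscosity test functions touching $u_i$. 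Where $d$ is smooth (near $\partial\Omega$ for a $C^2$ boundary) we compute classically.

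\textbf{Supersolution $u_2$.} Write $u_2=L_2\psi(d)$ with $\psi(d)=1-(1+d)^{-k}$, so that $\psi'=k(1+d)^{-k-1}>0$ and $\psi''=-k(k+1)(1+d)^{-k-2}<0$. Then
\[
\nabla u_2=L_2\psi'\nabla d,\qquad \nabla^2u_2=L_2\psi''\,\nabla d\otimes\nabla d+L_2\psi'\nabla^2 d .
\]
Near the boundary, $|\nabla d|=1$ and $\nabla^2 d$ is bounded, by $C^2$ regularity of $\partial\Omega$ (or $\nabla^2 d\le 0$ if $\Omega$ is convex). Hence
\[
G\le L_2k(1+d)^{-k-2}\bigl(-\lambda(k+1)+C(1+d)\bigr)+bL_2k(1+d)^{-k-1}.
\]
Choosing $k$ large, depending on $\lambda,\Lambda,b,\operatorname{diam}\Omega$ and the curvature bound, makes this at most $-c L_2 k(1+\operatorname{diam}\Omega)^{-k-2}$.

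In the interior, where $d$ is not smooth, $d$ is semiconcave, and for convex $\Omega$ it is concave. So any $C^2$ function touching $u_2$ from below has Hessian bounded above by the same expression; test functions touching from below at a kink can only exist with concave contact, which gives the same inequality. Since $u_2\ge 0$ and $\tilde h(x,\cdot)$ is non-increasing, $\tilde h(x,u_2)\le \tilde h(x,0)$. Taking $L_2$ large so that $cL_2k(1+\operatorname{diam}\Omega)^{-k-2}\ge \|f\|_\infty+\sup_x|\tilde h(x,0)|$ gives $G+\tilde h(x,u_2)\le f$, and clearly $u_2=0$ on $\partial\Omega$.

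\textbf{Subsolution $u_1$.} Here $u_1=-L_1\psi_{k_1}(d)$. This is symmetric: the sign flips and $u_1\le 0$ gives $\tilde h(x,u_1)\ge\tilde h(x,0)$, so the same choices of $k_1,L_1$ yield $G+\tilde h\ge f$.

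\textbf{Main obstacle.} The hard step is the interior region where $d$ fails to be $C^2$ (the ridge/cut locus). The plan is to use that semiconcavity of $d$ still provides a one-sided Hessian bound in the viscosity sense. Alternatively, one can replace $d$ by a smooth regularized distance agreeing with $d$ near $\partial\Omega$ and having bounded $C^2$ norm, with gradient bounded below only near the boundary. In the interior, where $d\ge\delta$, the term $\psi''$ no longer dominates, so a different argument is needed there: there one uses that $L$ large together with $\psi'$ bounded below still absorbs the lower-order terms, adding a uniformly concave correction such as $-\varepsilon|x|^2$ if needed.
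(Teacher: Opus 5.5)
The paper gives no proof of this lemma (it is quoted from \cite{IF22}); your argument is the standard barrier verification behind it and is correct. At a contact point the test function transfers through $\psi^{-1}$ to a $C^2$ function touching $d$ from below, which forces $|\nabla d|=1$ there and $\nabla^2 d\le CI$ in the touching sense (from the $C^2$ collar near $\partial\Omega$, or from the upper support function $x\mapsto|x-y_0|$ with $y_0$ a nearest boundary point), so taking $k$ large and then $L$ large works exactly as you compute. Two clarifications. Keep only this route: the regularized-distance alternative in your last paragraph fails, because the bounds on $G$ scale linearly in $L$ (so enlarging $L$ cannot fix the sign), and at a genuine kink of $d$ no test function touches from below at all. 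Also, your Pucci bounds (the case $\alpha=0$) and $\sup_x|\tilde h(x,0)|<\infty$ are added hypotheses; they are harmless for the paper's use in Remark~\ref{U:remark3.2} ($\alpha=0$, $\tilde h=e^{2a}h$ with $h(0)=0$), and for general $\alpha>-1$ the factor $|\nabla u|^{\alpha}=(L\psi'(d))^{\alpha}$ only changes the scaling to $L^{1+\alpha}$.
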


    We need to emphasize that the $x$ in the Lemma \ref{U:lemma3.8}  is different from the one in our article.
  Here $x\in \Omega \subset\mathbb{R}^{n}$, and $\Omega $ is a bounded domain of  $ \mathbb{R}^{n}$  with  $C^2$
boundary.
In addition, we introduce $$G(x,\nabla u,\nabla^2u):= \tilde{F}(x,\nabla u,\nabla^2u)+b(x)\cdot \nabla u |\nabla u|^{\alpha},\ \  \alpha>-1,$$ where  $b:\Omega \to \mathbb{R}^{n}$ is continuous and bounded, and  $\tilde{F}:\Omega \times \mathbb{R}^{n}\setminus \{0\}\times S^n \to \mathbb{R}$ is  continuous. Here $S^{n}$ denotes the space of symmetric matrices on $\mathbb{R}^{n}$.

 $\tilde{F}$ and $b(x)$ satisfy the following assumptions:

 $(\text{H}1)\ \  \tilde{F}(x,tP, \mu X) =|t|^{\alpha} \mu \tilde{F}(x,P,X)\ \ \forall t\in \mathbb{R}\backslash \{0\}, \mu\in \mathbb{R}^{+}, x\in \Omega.$

  $(\text{H}2)\ \ \text{There \ exist} \ c_1 \geq c_2>0$ such that
  $$ c_2|P|^{\alpha}tr(N)\leq  \tilde{F} (x,P,M+N)- \tilde{F}(x,P,M)\leq c_1|P|^{\alpha}tr(N),$$
  for all $x\in \Omega, \ P\neq0,$  and  $ (M,N)\in S^n\times S^n$ with $N\geq 0$.

   $(\text{H}3)\ \  $ There exists a continuous function $\tilde{\omega}$ with $\tilde{\omega}(0)=0$ such that for all $x,y, P\neq 0$, and   $X\in S^{n}$,
   $$ \left|\tilde{F}(x,P,X)-\tilde{F}(y,P,X)\right|\leq \tilde{\omega}(|x-y|)|P|^{\alpha}|X|,$$
where $|X|$  is the Frobenius norm of $X$.

    $(\text{H}4)\ \  $ There exists a continuous function $\omega$ with $\omega(0)=0,$ such that if $X\in S^{n}$, $Y\in  S^{n}$ and $\zeta\in \mathbb{R}$  satisfy

    $$-\zeta \begin{pmatrix}
       I&
       0\\
       0&I
    \end{pmatrix}\leq \begin{pmatrix}
       X&0\\
       0&Y
    \end{pmatrix}\leq
     4\zeta\begin{pmatrix}
       I&-I\\
       -I&I
    \end{pmatrix},
    $$
    where $I$ is the identity matrix in $\mathbb{R}^n$, then for all $x,y\in \mathbb{R}^n$, $x\neq y$,
    $$\tilde{F}(x,\zeta(x-y),X)-\tilde{F}(y,\zeta(x-y),-Y) \leq \omega (\zeta|x-y|^2).$$

     $(\text{H}5)\ \  $ The function $b:\Omega \to \mathbb{R}^{n}$ is  continuous and bounded, and satisfies:

      either $\alpha<0$ and $b(x)$ is H\"older continuous  with exponent $1+\alpha$; or $\alpha\geq 0$ and for all $x$, $y\in \Omega$ ,   $ \langle b(x)-b(y), x-y\rangle\leq 0.$

   Next, we present the  existing results in  \cite{HP2}.
   
Suppose  $\overline{{F}} $ is a real-valued continuous function on $ \overline{\Omega}\times \mathbb{R}\times \mathbb{R}^n \times S^n$,
 where $\Omega$ is an  bounded open subset of $\mathbb{R}^n$ and $S^n$ denotes the space of $n\times n$ real symmetric matrices,

 $(\text{$\widetilde{H}$}1)\ \  \overline{{F}}$  is  degenerate elliptic:  for all $(x,r,p,X)\in {\Omega}\times \mathbb{R}\times \mathbb{R}^n \times S^n$, $Y\in S^n$,
 \begin{equation}\label{eq:21}
\overline{{F}}(x,r,p,X+Y)\le \overline{{F}}(x,r,p,X) \qquad \text{if } Y\ge 0,
\end{equation}

 $(\text{$\widetilde{H}$}2)\ \ \text{For all} \ R<\infty$,  there exists a  constant ${\gamma\geq 0}$  such that
  \begin{equation}\label{eq:22}{\overline{{F}}(x, r, p, M)\geq  \overline{{F}} (x, s, p, M) +\gamma(r-s)\ \ \text{for} \ r\geq s}.\end{equation}

  for all $x\in\Omega$, $R\ge r\ge s\ge -R$, $p\in\mathbb{R}^n$, $A\in S^n$.

   $(\text{$\widetilde{H}$}3)\ \ \text{For all} \  R<\infty$, if $x,y\in\Omega$, $|r|\le R$, $p\in\mathbb{R}^n$, $A\in S^n$, \begin{equation}\label{eq:23}
|\overline{{F}}(x,r,p,A)-\overline{{F}}(y,r,p,A)|\le \omega_R\big(|x-y|(1+|p|)\big)
\end{equation}

 Consider
\begin{equation}\label{U:eq:1}
\overline{{F}}(x, u, Du, D^2u) = 0
\end{equation}
For an USC bounded viscosity subsolution $u$ of \eqref{U:eq:1} and a LSC bounded viscosity supersolution $v$ of  \eqref{U:eq:1},  the comparison assertion is:
\begin{equation} \label{eq:46}{ u-v\leq  \sup_{x\in\partial\Omega} \left\{ u^{*}(x) - v_{*}(x) \right\}^{+} \ \ \text{in} \ \ \Omega,}  \end{equation}
where $\displaystyle u^{*}(x):= \limsup_{y\to x, \ y \in \Omega} u(y),\ \  \text{and}\ \   u_{*}(x):=\liminf_{y\to x, \ y \in \Omega}u(y).$
\begin{lemma}[\cite{HP2}, PROPOSITION II.1]\label{lemma 1}
Assume that the comparison assertion \eqref{eq:46} holds for any pair $(u,v)$  with  the properties indicated above and  assume that there exist $u_1, u_2 \in C(\overline{\Omega})$ respectively viscosity subsolution and supersolution of \eqref{U:eq:1}
 such that $u_1 \equiv u_2 \equiv \varphi$ on $\partial\Omega$. Then, there exists a unique viscosity solution $u \in C(\overline{\Omega})$ of \eqref{U:eq:1} such that $u \equiv \varphi$ on $\partial\Omega$ (and $u_1 \leq u \leq u_2$ on $\overline{\Omega}$).\end{lemma}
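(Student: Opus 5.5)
The plan is to run Perron's method of Ishii. Let
$$\mathcal{S}:=\{w\in USC(\Omega):\ w \text{ is a viscosity subsolution of } \eqref{U:eq:1},\ u_1\le w\le u_2 \text{ in } \Omega\},$$
which is nonempty because $u_1\in\mathcal{S}$ (comparison gives $u_1\le u_2$). Define $u(x):=\sup\{w(x): w\in\mathcal{S}\}$, so that $u_1\le u\le u_2$. Write $u^*$ and $u_*$ for the upper and lower semicontinuous envelopes of $u$ on $\overline{\Omega}$. Since $u_1,u_2\in C(\overline{\Omega})$ both equal $\varphi$ on $\partial\Omega$, we get $u^*=u_*=\varphi$ on $\partial\Omega$.

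\textbf{Step 1: $u^*$ is a subsolution.} Let $\phi\in C^2$ and let $u^*-\phi$ have a strict local maximum at $x_0$. I will pick $w_k\in\mathcal{S}$ and points $x_k\to x_0$ with $w_k(x_k)\to u^*(x_0)$. Then $w_k-\phi$ attains a local maximum at some $y_k\to x_0$, with $w_k(y_k)\to u^*(x_0)$. The subsolution inequality for $w_k$ at $y_k$ passes to the limit by the continuity of $\overline{F}$, and this gives $\overline{F}(x_0,u^*(x_0),D\phi,D^2\phi)\le 0$. In addition, $u^*\le u_2$ because $u_2$ is continuous, so $u^*\in\mathcal{S}$ and therefore $u^*=u$.

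\textbf{Step 2: $u_*$ is a supersolution (bump construction).} I expect this to be the main step. Suppose $u_*-\phi$ has a strict local minimum at $x_0$ with value $0$, but
$$\overline{F}(x_0,u_*(x_0),D\phi(x_0),D^2\phi(x_0))<0.$$
First, $u_*(x_0)<u_2(x_0)$. Otherwise $\phi$ touches $u_2$ from below at $x_0$, and the supersolution property of $u_2$ contradicts the strict inequality. By continuity, $\phi+\delta$ is a classical, hence viscosity, subsolution in a small ball $B(x_0,r)$ for $\delta>0$ small, and $\phi+\delta<u_2$ there. Shrinking $\delta$, strictness of the minimum gives $\phi+\delta<u_*\le u$ on $\partial B(x_0,r/2)$. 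Set $W=\max\{u,\phi+\delta\}$ in $B(x_0,r/2)$ and $W=u$ elsewhere. The maximum of two subsolutions is a subsolution, and the gluing is legitimate because $W=u$ near $\partial B(x_0,r/2)$. Hence $W\in\mathcal{S}$. Along a sequence $x_k\to x_0$ with $u(x_k)\to u_*(x_0)=\phi(x_0)$, we get $W(x_k)\ge\phi(x_k)+\delta>u(x_k)$ for large $k$. This contradicts the maximality of $u$. The monotonicity $(\widetilde{H}2)$ is used where the zeroth-order argument is perturbed by $\delta$.

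\textbf{Step 3: conclusion.} Now $u^*$ is a bounded USC subsolution and $u_*$ is a bounded LSC supersolution, with $u^*=u_*=\varphi$ on $\partial\Omega$. The comparison assertion \eqref{eq:46} gives $u^*\le u_*$. Since trivially $u_*\le u\le u^*$, $u$ is continuous on $\overline{\Omega}$, is a viscosity solution, equals $\varphi$ on $\partial\Omega$, and satisfies $u_1\le u\le u_2$. For uniqueness, apply \eqref{eq:46} in both directions to any two such solutions; the boundary term vanishes.
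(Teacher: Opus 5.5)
Your proposal is correct. It is the standard Perron-method argument of Ishii, which is what the cited Proposition~II.1 rests on; the paper gives no proof of its own beyond that citation. One small correction: the $\delta$-perturbation in Step~2 follows from continuity of $\overline{F}$ alone, and $(\widetilde{H}2)$ plays no role there, since raising the zeroth-order argument can only increase $\overline{F}$ (the wrong direction for keeping it negative).
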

\begin{lemma}[\cite{HP2}, THEOREM II.1]\label{eq:24}Assume \eqref{eq:21} and \eqref{eq:23}. If \eqref{eq:22} holds for $\gamma>0$, then  for any  USC bounded viscosity subsolution $u$ of \eqref{U:eq:1} and  LSC bounded viscosity supersolution $v$ of  \eqref{U:eq:1}, the  comparison assertion \eqref{eq:46} holds. Furthermore, if $u$   is a strict viscosity  subsolution(resp.
$v$
 a strict viscosity supersolution), the \eqref{eq:46} still  holds for $\gamma\geq 0$.
\end{lemma}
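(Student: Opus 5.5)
We would prove the lemma by the standard doubling-of-variables argument combined with the Crandall--Ishii lemma. Set $R:=\max\{\sup|u|,\sup|v|\}<\infty$. Let $\gamma>0$ be the constant given by $(\widetilde H2)$ for this $R$, and let $\omega_R$ be the modulus from $(\widetilde H3)$. Argue by contradiction and assume
$$\delta:=\sup_{\Omega}(u-v)-\sup_{\partial\Omega}\{u^*-v_*\}^+>0 .$$
For $\alpha>0$ we consider
$$\Phi_\alpha(x,y)=u(x)-v(y)-\frac{\alpha}{2}|x-y|^2$$
on $\overline\Omega\times\overline\Omega$, where $u$ is extended by $u^*$ and $v$ by $v_*$ to $\overline\Omega$. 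Since $u$ is USC, $v$ is LSC and $\overline\Omega$ is compact, $\Phi_\alpha$ attains its maximum at some point $(x_\alpha,y_\alpha)$.

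\textbf{Step 1: behaviour of the maximum points.} Write $M_\alpha:=\Phi_\alpha(x_\alpha,y_\alpha)$. Then $M_\alpha\ge\sup_\Omega(u-v)$, and $M_\alpha$ is non-increasing in $\alpha$. By the usual argument (compare $M_\alpha$ with $M_{\alpha/2}$, and use boundedness together with semicontinuity), we get $\alpha|x_\alpha-y_\alpha|^2\to0$ as $\alpha\to\infty$. Moreover, along a subsequence $x_\alpha,y_\alpha\to\hat x$, and $\hat x$ is a maximum point of $u^*-v_*$ with value at least $\sup_\Omega(u-v)$. The definition of $\delta$ then forces $\hat x\notin\partial\Omega$. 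Hence $x_\alpha,y_\alpha\in\Omega$ for $\alpha$ large, and $u(x_\alpha)-v(y_\alpha)\ge\sup_\Omega(u-v)\ge\delta>0$.

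\textbf{Step 2: theorem of sums.} Put $p_\alpha=\alpha(x_\alpha-y_\alpha)$. The Crandall--Ishii lemma yields $X,Y\in S^n$ such that $(p_\alpha,X)$ lies in the closed superjet of $u$ at $x_\alpha$ and $(p_\alpha,Y)$ lies in the closed subjet of $v$ at $y_\alpha$, with
$$-3\alpha\begin{pmatrix}I&0\\0&I\end{pmatrix}\le\begin{pmatrix}X&0\\0&-Y\end{pmatrix}\le3\alpha\begin{pmatrix}I&-I\\-I&I\end{pmatrix}.$$
Testing this inequality against vectors $(\xi,\xi)$ gives $X\le Y$. The viscosity inequalities, which extend to closed jets by the continuity of $\overline F$, give
$$\overline F(x_\alpha,u(x_\alpha),p_\alpha,X)\le0\le\overline F(y_\alpha,v(y_\alpha),p_\alpha,Y).$$

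\textbf{Step 3: contradiction.} We chain the structure conditions:
$$\gamma\big(u(x_\alpha)-v(y_\alpha)\big)\le\overline F(x_\alpha,u,p_\alpha,X)-\overline F(x_\alpha,v,p_\alpha,X)\le\overline F(y_\alpha,v,p_\alpha,Y)-\overline F(x_\alpha,v,p_\alpha,X).$$
The first inequality is $(\widetilde H2)$; the second uses the two viscosity inequalities from Step 2. By $(\widetilde H1)$ and $X\le Y$ we have $\overline F(y_\alpha,v,p_\alpha,Y)\le\overline F(y_\alpha,v,p_\alpha,X)$. Then $(\widetilde H3)$ bounds the right-hand side by
$$\omega_R\big(|x_\alpha-y_\alpha|+\alpha|x_\alpha-y_\alpha|^2\big).$$
By Step 1 this quantity tends to $0$, while the left-hand side stays at least $\gamma\delta>0$, which is a contradiction.

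\textbf{Strict case.} If $u$ is a strict subsolution, i.e.\ $\overline F(x,u,p,X)\le-\eta<0$, the same chain holds with $\gamma\ge0$. The constant $\eta$ now replaces $\gamma\delta$ as the positive quantity on the left-hand side. The case of a strict supersolution $v$ is symmetric.

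The main obstacle is Step 1: showing that the doubling points stay away from $\partial\Omega$. This must be done using only the boundary semicontinuous envelopes $u^*$ and $v_*$, and it is exactly where the positive gap $\delta$ over $\sup_{\partial\Omega}\{u^*-v_*\}^+$ is used.
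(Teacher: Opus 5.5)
Your argument is correct: it is the standard doubling-of-variables proof via the Crandall--Ishii theorem of sums, which is how the cited Ishii--Lions result is proved (the paper quotes the lemma without proof). One small imprecision is in the strict case: the paper's definition only gives $g\in C(\overline{\Omega})$ with $g<0$ in $\Omega$, not a uniform bound $-\eta$, but since your limit point $\hat x$ lies in $\Omega$, the convergence $g(x_\alpha)\to g(\hat x)<0$ supplies the needed positive gap.
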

Here, by strict we mean that, for instance in the case of subsolutions, there exists $g \in C(\overline{\Omega})$ such that $u$ is viscosity subsolution of
\[
\overline{{F}}(x, u, Du, D^2u) = g \quad \text{in } \Omega, \quad \text{and } g < 0 \quad \text{in } \Omega.
\]

   \begin{Remark}\label{U:remark3.2}
  We claim that there exists a continuous viscosity solution for
  \begin{equation}\label{U:3.19}
    \begin{cases}
F((t,x),\nabla_\mathbb{B}u,\nabla_\mathbb{B}^2u)+h(u)=f(t,x)  \ \ \ & (t,x)\in  \Omega,\\
u=0  \ & (t,x)\in    \partial{\Omega},
\end{cases}
\end{equation}
when $\Omega$ is a bounded domain of  $\mathbb{B}$  with  $C^2$
boundary with respect to the  cone distance defined in Definition \ref{U:cone 9}. Here $F((t,x),\nabla_\mathbb{B}u,\nabla_\mathbb{B}^2u)$ defined in \eqref{U:eq:76}.

  In fact, let $$a=\ln t,\   \overset{T}{}\Omega =\{(a,x): (t,x)\in \Omega \},$$ and then $\overset{T}{}\Omega$ is a bounded domain of  $ \mathbb{R}^{n}$  with a $C^2$ boundary. Set
  $$  v(a,x)=u(t,x),  \ \overset{T}{}f(a,x)=f(t,x),$$
  $$\overset{T}{}F((a,x),\nabla v(a,x),\nabla^2 v(a,x))=F((t,x),\nabla_{\mathbb{B}} u(t,x),\nabla_{\mathbb{B}}^2u(t,x))=e^{-2a}tr(\nabla^2 v)+e^{-2a}(n-2) \partial_{a}v,$$ and then $tr(\nabla^2 v)+(n-2) \partial_{a}v+e^{2a}h(v(a,x))=\overset{T}{}f(a,x)e^{2a}$ satisfies the assumptions of Lemma \ref{U:lemma3.8} with  $\alpha=0$.

    By Lemma \ref{U:lemma3.8}, there is  a function $v_{1}\in C(\overline{\overset{T}{}\Omega})$ is a viscosity subsolution of the following equation,  and  that $v_{2}\in C(\overline{\overset{T}{}\Omega})$ is a viscosity supersolution of the following equation,
   \begin{equation}\label{U:eq:922}
    \begin{cases}
tr(\nabla^2 v)+(n-2) \partial_{a}v+e^{2a}h(v(a,x))=\overset{T}{}f(a,x)e^{2a}  \ \ \ & (a,x)\in  \overset{T}{}\Omega,\\
v=0  \ & (a,x)\in    \partial{(\overset{T}{}\Omega)}.
\end{cases}
\end{equation} 
For any  viscosity solution $v$ (resp. supersolution, subsolution) of \eqref{U:eq:922},    $v$ is  viscosity solution (resp. supersolution, subsolution) of \eqref{eq:25}
\begin{equation}\label{eq:25}
    \begin{cases}
-tr(\nabla^2 v)-(n-2) \partial_{a}v-e^{2a}h(v(a,x))+\overset{T}{}f(a,x)e^{2a} =0 \ \ \ & (a,x)\in  \overset{T}{}\Omega,\\
v=0  \ & (a,x)\in    \partial{(\overset{T}{}\Omega)}.
\end{cases}
\end{equation} 
Now we consider equation  \begin{equation}\label{eq:27}-tr(\nabla^2 v)-(n-2) \partial_{a}v-e^{2a}h(v(a,x))+\overset{T}{}f(a,x)e^{2a} =0 \ \ \  (a,x)\in  \overset{T}{}\Omega\end{equation}
 Suppose that $v_3$ is an USC bounded viscosity subsolution of \eqref{eq:27} and $v_4$ is a LSC bounded viscosity supersolution  of  \eqref{eq:27}.
For any  $\lambda>0$, $\phi\in C^{2}(\overset{T}{}\Omega)$, if $v_{3}+\lambda a-\phi$ attains  a local maximum at  $(a_0,x_0)$, then
$$\left( -tr(\nabla^2 \phi)-(n-2) \partial_{a}\phi+(n-2)\lambda-e^{2a}h(v_{3}(a,x))+\overset{T}{}f(a,x)e^{2a}\right)\big|_{(a_0,x_0)}\leq 0.$$
Since $a=\ln t \leq 0$,  the  non-increasing property of function  $h$ yields  that
$$\left(- tr(\nabla^2 \phi)-(n-2) \partial_{a}\phi-e^{2a}h(v_{3}(a,x)+\lambda a)+\overset{T}{}f(a,x)e^{2a}\right)\big|_{(a_0,x_0)}\leq -(n-2)\lambda<0.$$
By the Definition \ref{U:D1}, we have that   $v_{3}+\lambda a$ is  the viscosity subsolution of $$-tr(\nabla^2 v)-(n-2) \partial_{a}v-e^{2a}h(v(a,x))+\overset{T}{}f(a,x)e^{2a}=-(n-2)\lambda.$$
   In other words,    $v_{3}+\lambda a$ is a strict viscosity subsolution of $$-tr(\nabla^2 v)-(n-2) \partial_{a}v-e^{2a}h(v(a,x))+\overset{T}{}f(a,x)e^{2a}=0.$$
  Lemma \ref{eq:24} gives
  $$v_{3}+\lambda a-v_{4}\leq \sup \limits_{\partial(\overset{T}{}\Omega)}(v^{*}_{3}+\lambda a-{v_{4}}_{*})^{+}\  \text{in}\  \ \overset{T}{}\Omega. $$  That $\lambda\to 0$ implies $$v_{3}-v_{4}\leq \sup \limits_{\partial(\overset{T}{}\Omega)}(v^{*}_{3}-{v_{4}}_{*})^{+}\  \text{in}\  \ \overset{T}{}\Omega. $$
By Lemma \ref{lemma 1}, there exists a unique viscosity solution $v \in C(\overline{\Omega})$ of \eqref{eq:25}, which is also a viscosity solution  of \eqref{U:eq:922}.

Let $u(t,x)=v(a,x)$ with $a=\ln t$. We will prove that  $u\in C(\overline{\Omega})$  is a viscosity solution for \eqref{U:3.19} in $\Omega$.
 For any $\phi(t,x)\in C^2(\Omega)$, if $u-\phi$ has a local minimum at $(t_0,x_0)\in \Omega$, then $v-\overset{T}{}\phi$ has a local minimum at $(a_0,x_0)\in \overset{T}{}\Omega$ with $a_0=\ln t_0 $, $\overset{T}{}\phi(a,x)=\phi(t,x)$. Since $v$ is a viscosity solution of \eqref{U:eq:922}, we have
 \begin{equation}\label{U:3.21}
\left(e^{-2a}tr(\nabla^2(\overset{T}{}\phi))+(n-2)  e^{-2a} \partial_{a}(\overset{T}{}\phi)+h(v)\right)\big|_{(a_0,x_0)}=\overset{T}{}f(a_0,x_0).\end{equation}
 Since $\overset{T}{}\phi(a,x)=\phi(t,x)$ with $a=\ln t$, similar to \eqref{U:eq:866}, we get
\begin{equation}\label{U:3.22}
\nabla_{\mathbb{B}}\phi(t,x)=\nabla(\overset{T}{}\phi)(a,x), \ \nabla_{\mathbb{B}}^2\phi(t,x)=\nabla^2(\overset{T}{}\phi)(a,x).
\end{equation}
 Substituting $t=e^a$ and $\eqref{U:3.22}$ into \eqref{U:3.21}, we have
 $$F((t_0,x_0),\nabla_\mathbb{B}\phi(t_0,x_0),\nabla_\mathbb{B}^2\phi(t_0,x_0))+h(u(t_0,x_0))\leq f(t_0,x_0).$$
 Since $v=0$ on $\partial (\overset{T}{}\Omega)$,  $u(t,x)=0$ on $\partial \Omega$.
  Hence $u(t,x)$ is a viscosity supersolution for \eqref{U:3.19}. Similarly, we get $u(t,x)$ is also a viscosity subsolution for \eqref{U:3.19}. Hence the claim is true.
  \end{Remark}

 \begin{proof}[\textbf{Proof of Theorem \ref{U:B}}]

Let $\{H_{j}\}$  be  defined as in  Assumption \ref{U:assumption2.2}.
By  Remark \ref{U:remark3.2}, we can find a viscosity solution $u_j\in C(\overline{H}_j)$ for
\begin{equation}\label{U:H9}
    \begin{cases}
F((t,x),\nabla_\mathbb{B}u,\nabla_\mathbb{B}^2u)+h(u)=f(t,x) \ \ \ & (t,x)\in  H_{j},\\
u=0  \ & (t,x)\in    \partial{H_{j}},
\end{cases}
\end{equation}
where $F((t,x),\nabla_\mathbb{B}u,\nabla_\mathbb{B}^2u)$ defined as in \eqref{U:eq:76}.
By Theorem \ref{U:C}, we can get
\begin{equation}\label{U:3.18}\begin{aligned}
||u_j(t,x)||_{\rho,\overline{H}_j}&\leq  C \left( {\vert\vert t^2f\vert\vert_{\mathbb{L}^{1}_{n}\left(H_j\right)}} +h(-C^*K_0d_0 {\vert\vert t^2f\vert\vert_{\mathbb{L}^{1}_{n}\left(H_j\right)}})\right) \\ &\leq C \left( {\vert\vert t^2f\vert\vert_{\mathbb{L}^{1}_{n}\left(\mathbb{B}\right)}} +h(-C^*K_0d_0 {\vert\vert t^2f\vert\vert_{\mathbb{L}^{1}_{n}\left(\mathbb{B}\right)}})\right),
 \end{aligned}\end{equation}
where $C$ doesn't depend on $j$.
 Therefore, by Arzela-Ascoli theorem we can find a subsequence of $\{u_{j_k}\}$ that converges uniformly to $v_{\bar{O}}$ in $\bar{O}$, where $O\Subset \mathbb{B}$ is a bounded open set.

 Next, we will prove this theorem in three steps.

Step 1:
 we claim that $v_{\bar{O}}$ is the viscosity solution of \eqref{U:equation3.12} in $ O$,
 \begin{equation}\label{U:equation3.12}
 F((t,x),\nabla_\mathbb{B}u,\nabla_\mathbb{B}^2u)+h(u)=f(t,x) .\end{equation}
 We will only verify that $v_{\bar{O}}$ is the viscosity  supersolution of \eqref{U:equation3.12}, because the  proof for the case that $v_{\bar{O}}$ is the viscosity  subsolution is  similar.

 In fact, for all $\phi\in C^{2}(O)$, if $v_{\bar{O}}-\phi$ has a local minimum at $(t_0,x_0)\in O$, then $$v_{\bar{O}}-\phi+|(t,x)-(t_0,x_0)|_{\mathbb{B}}^4$$ reaches a strict local minimum at $(t_0,x_0)$. For convenience we define $\tilde{\phi}=\phi-|(t,x)-(t_0,x_0)|_{\mathbb{B}}^4$, and then
$v_{\bar{O}}-\tilde\phi$ has a strict local minimum at $(t_0,x_0)\in O$. Hence  we have
 $$\mathop{\min}\limits_{\partial U}(v_{\bar{O}}-\tilde\phi)(t,x)>(v_{\bar{O}}-\tilde\phi)(t_0,x_0)$$ for some open neighborhood $U$
of $(t_0, x_0)$.

Since $u_{j_k}$ converges uniformly to $v_{\bar{O}}$ in $\bar{O}$,  when $j_k$ is sufficiently large, we have
$$\mathop{\min}\limits_{\partial U}(u_{j_k}-\tilde\phi)(t,x)>(u_{j_k}-\tilde\phi)(t_0,x_0) . $$
Hence $u_{j_k}-\tilde\phi$ has a local minimum at $ (t_{j_k},x_{j_k})\in U$. Now, if we let the radius of $U$ go to zero, we get a sequence of points $(t_{j_k},x_{j_k})\to (t_0,x_0)$ such that $u_{j_k}-\tilde\phi$ has a local minimum at $ (t_{j_k},x_{j_k})$.  When $j_k$ is large enough, $u_{j_k}$ is the viscosity supersolution  of  \eqref{U:equation3.12} in $O$, and then we have
$$  F((t_{j_k},x_{j_k}),\nabla_\mathbb{B}\tilde\phi(t_{j_k},x_{j_k}),\nabla_\mathbb{B}^2\tilde\phi(t_{j_k},x_{j_k}))+h(u_{j_k}(t_{j_k},x_{j_k}))\leq f(t_{j_k},x_{j_k}) $$
with $$\nabla_\mathbb{B}\tilde\phi(t_{j_k},x_{j_k})=\nabla_\mathbb{B}\phi(t_{j_k},x_{j_k})-\nabla_\mathbb{B}|(t_{j_k},x_{j_k})-(t_0,x_0)|_{\mathbb{B}}^4,$$

$$ \nabla^2_\mathbb{B}\tilde\phi(t_{j_k},x_{j_k})=\nabla_\mathbb{B}\phi(t_{j_k},x_{j_k})-\nabla^2_\mathbb{B}|(t_{j_k},x_{j_k})-(t_0,x_0)|_{\mathbb{B}}^4.$$

   Let $(t_{j_k},x_{j_k})\to (t_0,x_0)$. Since  $u_{j_{k}}$ converges uniformly to $v_{\bar{O}}$ on $\bar{O}$,  together with the continuity of $F,f$ and $h$,  and the $C^2$ smoothness of $\tilde{\phi}$, we obtain that
$$F((t_0,x_0),\nabla_\mathbb{B}\phi(t_0,x_0),\nabla_\mathbb{B}^2\phi(t_0,x_0))+h(v_{\bar{O}}(t_0,x_0))\leq f(t_0,x_0).$$
 Hence  $v_{\bar{O}}$ is the viscosity supersolution of \eqref{U:equation3.12} in $ O$.

Step 2 :  We  construct a function $v$ such that it is the viscosity solution of \eqref{U:equation3.12}
  in $ \mathbb{B}$.

   Let $\bar{O}_{i}$ be a sequence of compact sets such that
$$O_{i}\Subset O_{i+1}\Subset \mathbb{B} \ \ \ \text{and } \ \ \ \cup_{i} O_{i} =\mathbb{B},$$ and we define
\begin{equation}
v=\lim\limits_{i\to\infty}v_{\bar{O}_i}.\end{equation}


 For any $(t_0,x_0)\in \mathbb{B}$, we can choose $i$  large enough such that $(t_0,x_0)\in O_i$.
If $v-{\phi}$ attains a  local minimum at $(t_0,x_0)\in \mathbb{B}$ for any ${\phi}\in C^2(\mathbb{B})$, then $v_{\bar{O}_i}-{\phi}$ attains  a  local minimum at $(t_0,x_0)\in \mathbb{B}$. So the following inequality holds $$F((t_0,x_0),\nabla_\mathbb{B}\phi(t_0,x_0),\nabla_\mathbb{B}^2\phi(t_0,x_0))+h(v(t_0,x_0))\leq f(t_0,x_0),
 $$
which implies $v$ is the viscosity supersolution of \eqref{U:equation3.12}
  in $ \mathbb{B}$. Similarly, $v$ is also the viscosity subsolution \eqref{U:equation3.12}
  in $ \mathbb{B}$.

  Step 3:  We  define the value of  $v$ on $\partial \mathbb{B}$.   And we  prove that  $v$ is in  $C(\overline{\mathbb{B}})$   which satisfies \eqref{U:H8} and  \eqref{U:T1.10}.

  For any $\varepsilon>0$,  for all $(t_0,x_0)\in \mathbb{B}$, there is a $u_{j_{k}}$, a viscosity solution of \eqref{U:H9} in $H_{j_k}$, such that
  \begin{equation}\label{U:eq:87}
  |v(t_0,x_0)-u_{j_k}(t_0,x_0)|\leq \frac{\varepsilon}{3}. \end{equation}

   Indeed,  for all $(t_0,x_0)\in \mathbb{B}$, there exists $ O_i$ such that $(t_0,x_0)\in O_i$. Choosing $j_k$  large enough  such that $\bar{O}_i\subset H_{j_k}.$ Since $\{u_{j_k}\}$ converges uniformly to $v_{\bar{O}_i}$ in $\bar{O_i}$,   \eqref{U:eq:87} holds when $j_k$ is large enough.

 Recall  $d_{\partial\mathbb{B}}(t,x)$ defined in \eqref{U:eq:85}.  When $$d_{\partial\mathbb{B}}(t_0,x_0)\leq (\frac{\varepsilon}{3C})^{\frac{1}{\rho}}{ \left( {\vert\vert t^2f\vert\vert_{\mathbb{L}^{1}_{n}\left(\mathbb{B}\right)}} +h(-C^*K_0d_0 {\vert\vert t^2f\vert\vert_{\mathbb{L}^{1}_{n}\left(\mathbb{B}\right)}})\right)}^{\frac{-1}{\rho}},$$ by \eqref{U:3.18}, \eqref{U:eq:87} and  $u_{j_k}=0$ on $\partial H_{j_k}$, we have
 $$
 \begin{aligned}|v(t_0,x_0)-0|&\leq  |v(t_0,x_0)-u_{j_k}(t_0,x_0)|+ |u_{j_k}(t_0,x_0)-0| \\ &\leq
 \frac{\varepsilon}{3}+ C \left( {\vert\vert t^2f\vert\vert_{\mathbb{L}^{1}_{n}\left(\mathbb{B}\right)}} +h(-C^*K_0d_0 {\vert\vert t^2f\vert\vert_{\mathbb{L}^{1}_{n}\left(\mathbb{B}\right)}})\right)d_{\mathbb{B}}^{\rho}((t_0,x_0),\partial H_{j_k})\\
 &\leq  \frac{\varepsilon}{3}+ C \left( {\vert\vert t^2f\vert\vert_{\mathbb{L}^{1}_{n}\left(\mathbb{B}\right)}} +h(-C^*K_0d_0 {\vert\vert t^2f\vert\vert_{\mathbb{L}^{1}_{n}\left(\mathbb{B}\right)}})\right)d_{\partial\mathbb{B}}^{\rho}(t_0,x_0)
 \leq\frac{\varepsilon}{3}+\frac{\varepsilon}{3}<\varepsilon,
  \end{aligned}$$
  where $d_{\mathbb{B}}((t_0,x_0),\partial H_{j_k})$ is the cone distance from $(t_0,x_0)$  to $\partial H_{j_k}$.
 That is,
 $ \lim\limits_{(t_0,x_0)\to \partial\mathbb{B}}v(t_0,x_0)=0.$
 We can define $v\big|_{\partial\mathbb{B}}=0$,
 and then $v$ is in $ C(\mathbb{\overline{B}})$ and satisfies \eqref{U:H8}.

 From \eqref{U:3.18}, we have $|u_j|\leq C \left( {\vert\vert t^2f\vert\vert_{\mathbb{L}^{1}_{n}\left(\mathbb{B}\right)}} +h(-C^*K_0d_0 {\vert\vert t^2f\vert\vert_{\mathbb{L}^{1}_{n}\left(\mathbb{B}\right)}})\right).$
 From the construction of $v$, we get  $|v|\leq C \left( {\vert\vert t^2f\vert\vert_{\mathbb{L}^{1}_{n}\left(\mathbb{B}\right)}} +h(-C^*K_0d_0 {\vert\vert t^2f\vert\vert_{\mathbb{L}^{1}_{n}\left(\mathbb{B}\right)}})\right),$ and then $v$ satisfies
 $\mathop{\sup }\limits_{\mathbb{{B}}}\vert v \vert < +\infty$.
Finally we obtain \eqref{U:T1.10} by Theorem \ref{U:C}.
\end{proof}

\section{\textbf{The existence of weak solutions}}
In this section, when $f \in C(\overline{\mathbb{B}})\cap L^{\infty}(\mathbb{B})$,
 we conclude that the bounded  viscosity solution of \eqref{U:eq:11} is  also  a  weak solution of \eqref{U:eq:11}, which guarantees the existence of weak solutions to \eqref{U:H8}. Finally, we  prove that  the continuous   weak  solution of \eqref{U:eq:11} is  also a  viscosity solution of \eqref{U:eq:11}.

\begin{definition}[Inf-convolution]\label{U:def-2}
We define the inf-convolution of a function u as
 \begin{equation}\label{U:eq:95}
 u_{\epsilon}(t,x):=\mathop{\inf }\limits_{(s,y)\in\mathbb{B}}\left(u(s,y)+\frac{|(t,x)-(s,y)|_{\mathbb{B}}^2}{2\epsilon}\right).\end{equation}

\end{definition}

\begin{lemma}[\cite{VP}, Lemma A.1, A.2]\label{U:Lemma2.1} Let $\Omega$ be a domain of $\mathbb{R}^{n}$. Assume that $u:{\Omega}\to \mathbb{R}$ is bounded and lower semicontinuous and $u_{\epsilon, E}$  defined as
$$ u_{\epsilon,E}(x):=\mathop{\inf }\limits_{y\in \Omega}\left(u(y)+\frac{|x-y|^2}{2\epsilon}\right).$$
The following assertions hold.

$(1)$ The sequence $\{u_{\epsilon,E}\}$ is increasing  and converges pointwise to $u$ in $\Omega$;

$(2)$  $u_{\epsilon,E}$ can be written as
 \begin{equation}\label{U:eq:59}
 u_{\epsilon,E}(x):=\mathop{\inf }\limits_{y\in { \Omega \cap B(x,{r(\epsilon)})}}\left(u(y)+\frac{|x-y|^2}{2\epsilon}\right),\end{equation}
 where $r(\epsilon)=2\sqrt{\epsilon||u||_{L^{\infty}(\Omega)}}$, and $B(x,{r(\epsilon)})$ is a Euclid ball  with $x$ as  center and $r(\epsilon)$ as  radius;

$(3)$ $u_{\epsilon,E}(x)-\frac{|x|^2}{2\epsilon}$ is concave in $\Omega_{r(\epsilon),E}$,  where
\begin{equation}\label{U:eq:20}\Omega_{r(\epsilon),E}=\{x\in \Omega \ \big|\ |x-y|>r(\epsilon),\  \text{for\ all}\  y\in \partial \Omega\};\end{equation}

$(4)$ 
 $u_{\epsilon} $ is locally Lipschitz continuous  in the interior of $\Omega_{r(\epsilon),E}$.
\end{lemma}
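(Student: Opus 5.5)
Since this lemma is the Euclidean version of the inf-convolution facts, I would prove it directly from the definition of $u_{\epsilon,E}$, treating the four assertions in order. Throughout, write $M:=\|u\|_{L^\infty(\Omega)}$.

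For $(1)$, I would first take $y=x$ in the infimum to get $u_{\epsilon,E}\le u$. As $\epsilon$ decreases, the penalty $\frac{|x-y|^2}{2\epsilon}$ increases pointwise in $y$, so $u_{\epsilon,E}(x)$ increases. For convergence, fix $x$ and choose $y_\epsilon$ with
\[
u(y_\epsilon)+\frac{|x-y_\epsilon|^2}{2\epsilon}\le u_{\epsilon,E}(x)+\epsilon .
\]
Using $u(y_\epsilon)\ge -M$ and $u_{\epsilon,E}(x)\le M$, this gives $|x-y_\epsilon|^2\le 2\epsilon(2M+\epsilon)\to0$, so $y_\epsilon\to x$. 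Lower semicontinuity then yields
\[
\liminf_{\epsilon\to0} u_{\epsilon,E}(x)\ge \liminf_{\epsilon\to0} u(y_\epsilon)\ge u(x),
\]
which combined with $u_{\epsilon,E}\le u$ gives pointwise convergence.

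For $(2)$, I would show that points far from $x$ never improve the infimum. If $|x-y|\ge r(\epsilon)=2\sqrt{\epsilon M}$, then
\[
u(y)+\frac{|x-y|^2}{2\epsilon}\ge -M+2M=M\ge u(x).
\]
The value $u(x)$ is the competitor $y=x$, which lies in $\Omega\cap B(x,r(\epsilon))$ when $M>0$. So restricting the infimum to that ball does not change its value. The degenerate case $M=0$, i.e.\ $u\equiv0$, is trivial, with the ball read as closed or with any positive radius.

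For $(3)$, I would expand the square:
\[
u_{\epsilon,E}(x)-\frac{|x|^2}{2\epsilon}=\inf_{y\in\Omega}\Big(u(y)+\frac{|y|^2}{2\epsilon}-\frac{x\cdot y}{\epsilon}\Big).
\]
The right-hand side is an infimum of affine functions of $x$, hence concave on all of $\mathbb{R}^n$, and it coincides with $u_{\epsilon,E}(x)-|x|^2/2\epsilon$ on $\Omega$. Therefore it is concave on every convex subset of $\Omega$, in particular on every ball contained in $\Omega_{r(\epsilon),E}$.

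The main subtlety is that $\Omega_{r(\epsilon),E}$ need not be convex. So $(3)$ should be read, and proved, as local concavity on each convex piece (each ball and segment) inside $\Omega_{r(\epsilon),E}$. The role of $(2)$ and of the restriction to $\Omega_{r(\epsilon),E}$ is only that the relevant minimizers stay inside $\Omega$. This local concavity is also exactly what $(4)$ needs.

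For $(4)$, I would use that a concave function which is finite (indeed bounded by $|u_{\epsilon,E}|\le M$ minus a smooth term) on a ball is Lipschitz on each smaller concentric ball. This is the standard slope estimate via the oscillation on the larger ball. Adding back the smooth function $\frac{|x|^2}{2\epsilon}$ preserves local Lipschitz continuity, which gives $(4)$ in the interior of $\Omega_{r(\epsilon),E}$.
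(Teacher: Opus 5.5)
The paper does not prove this lemma; it quotes it from \cite{VP} (``For these and further properties, see \cite{VP}''), and derives only its cone analogue from it, via the substitution $a=\ln t$. Your argument is a correct, self-contained proof of the quadratic-penalty case, so it supplies what the paper leaves to the reference. Two comments sharpen it.

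First, your expansion proves more than you claim. For every $x\in\mathbb{R}^n$,
\[
\varphi(x):=\inf_{y\in\Omega}\Big(u(y)+\frac{|y|^2}{2\epsilon}-\frac{x\cdot y}{\epsilon}\Big)\ \ge\ -M-\frac{|x|^2}{2\epsilon},
\]
so $\varphi$ is a finite concave function on all of $\mathbb{R}^n$, and $u_{\epsilon,E}=\varphi+\frac{|x|^2}{2\epsilon}$. You therefore do not need to weaken (3) to local concavity on the grounds that $\Omega_{r(\epsilon),E}$ may be non-convex. The function $u_{\epsilon,E}-\frac{|x|^2}{2\epsilon}$ is the restriction of a globally concave function, and (3) and (4) actually hold on all of $\Omega$. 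The set $\Omega_{r(\epsilon),E}$ matters only later, in Lemma~\ref{U:Lemma5.2}. There, (2) together with lower semicontinuity on the compact ball $\overline{B(x,r(\epsilon))}\subset\Omega$ makes the infimum attained at a point of $\Omega$, which is what allows test functions to be translated.

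Second, the bound $u(y)\ge -M$ that you use in (1) and (2) requires $M=\sup_\Omega|u|$. For a lower semicontinuous function this can exceed the essential supremum of $|u|$. Take $u=0$ except $u(x_0)=-1$: then $\|u\|_{L^\infty(\Omega)}=0$, so $r(\epsilon)=0$, yet $u_{\epsilon,E}<0$ near $x_0$. So (2) is false if the norm is read literally as an essential supremum. Your remark ``$M=0$, i.e.\ $u\equiv 0$'' already tacitly uses the sup-norm reading, and that is the reading under which the statement is true.
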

For these and further properties, see \cite{VP}.

\begin{lemma}\label{U:lemma 6.2}
Assume that $u:{\mathbb{B}}\to \mathbb{R}$ is bounded and lower semicontinuous and $u_{\epsilon}$  defined as in Definition \ref{U:def-2}.
The following assertions hold.

$(1)$ The sequence $\{u_{\epsilon}\}$ is increasing  and converges pointwise to $u$ in $\mathbb{B}$;

$(2)$  $u_{\epsilon}$ can be written as
 \begin{equation}\label{U:eq:108}
 u_{\epsilon}(t,x):=\mathop{\inf }\limits_{(s,y)\in { \mathbb{B} \cap B_{\mathbb{B}}((t,x),{r(\epsilon)})}}\left(u(s,y)+\frac{|(t,x)-(s,y)|_{\mathbb{B}}^2}{2\epsilon}\right),\end{equation}
 where $r(\epsilon)=2\sqrt{\epsilon||u||_{L^{\infty}(\mathbb{B})}}$, and $B_{\mathbb{B}}((t,x),{r(\epsilon)})$ is a cone ball  with $(t,x)$ as  center and $r(\epsilon)$ as  radius;

$(3)$ $u_{\epsilon}(t,x)-\frac{|(\ln t,x)|^2}{2\epsilon}$ is concave in $\mathbb{B}_{r(\epsilon)}$ about $(\ln t,x)$,  where \begin{equation}\label{U:eq:82}\mathbb B_{r(\epsilon)}=\{(t,x)\in  \mathbb B\ \big{|}\ |(t,x)-(s,y)|_{\mathbb{B}}>r(\epsilon),\ \text{for all}\ (s,y)\in \partial  \mathbb B \};\end{equation}

$(4)$ 
 $u_{\epsilon} $ is locally Lipschitz continuous  in the interior of $\mathbb{B}_{r(\epsilon)}$  about $(\ln t,x)$.
\end{lemma}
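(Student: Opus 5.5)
The plan is to reduce everything to the Euclidean statement, Lemma \ref{U:Lemma2.1}, via the logarithmic change of variables already used in Lemma \ref{U:lemma3.5} and Remark \ref{U:remark3.2}. Set $a=\ln t$ and let $\overset{T}{}\mathbb{B}=\{(a,x):(t,x)\in\mathbb{B}\}=(-\infty,0)\times X$, which is a domain of $\mathbb{R}^n$. Define $w(a,x)=u(e^a,x)$. Since $(a,x)\mapsto(e^a,x)$ is a homeomorphism onto $\mathbb{B}$, $w$ is bounded and lower semicontinuous on $\overset{T}{}\mathbb{B}$, with $\|w\|_{L^\infty}=\|u\|_{L^\infty(\mathbb{B})}$.

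The key observation is that, by Definition \ref{U:cone 9},
\[
|(t,x)-(s,y)|_{\mathbb{B}}=|(\ln t,x)-(\ln s,y)|,
\]
the Euclidean distance in the new coordinates. Substituting $(b,y)=(\ln s,y)$ in \eqref{U:eq:95}, and using that the infimum over $(s,y)\in\mathbb{B}$ becomes the infimum over $(b,y)\in\overset{T}{}\mathbb{B}$, gives
\[
u_\epsilon(t,x)=w_{\epsilon,E}(\ln t,x).
\]

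Each assertion then transfers directly.

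For (1), $u_\epsilon(t,x)=w_{\epsilon,E}(\ln t,x)$ is increasing in the sense of Lemma \ref{U:Lemma2.1}(1) and converges to $w(\ln t,x)=u(t,x)$ pointwise.

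For (2), the image of the Euclidean ball $B((\ln t,x),r(\epsilon))$ under the inverse map is exactly $B_{\mathbb{B}}((t,x),r(\epsilon))$ by \eqref{U:eq:73}. Hence \eqref{U:eq:59} becomes \eqref{U:eq:108}, with the same $r(\epsilon)=2\sqrt{\epsilon\|u\|_{L^\infty(\mathbb{B})}}$.

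For (3), the set $\overset{T}{}\mathbb{B}_{r(\epsilon),E}$ defined by \eqref{U:eq:20} corresponds exactly to $\mathbb{B}_{r(\epsilon)}$ in \eqref{U:eq:82}, because $\partial\mathbb{B}$ corresponds to $\partial(\overset{T}{}\mathbb{B})$ and distances are preserved. Concavity of $w_{\epsilon,E}(a,x)-|(a,x)|^2/(2\epsilon)$ in $(a,x)$ is then precisely the claimed concavity in the variable $(\ln t,x)$.

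For (4), local Lipschitz continuity of $w_{\epsilon,E}$ in the interior gives local Lipschitz continuity of $u_\epsilon$ with respect to $(\ln t,x)$.

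The main point needing care is the boundary correspondence, since $\overset{T}{}\mathbb{B}$ is unbounded as $a\to-\infty$ and Lemma \ref{U:Lemma2.1} is stated for a general domain. I would check that the Euclidean boundary of $\overset{T}{}\mathbb{B}$ is $(\{0\}\times X)\cup((-\infty,0]\times\partial X)$, the image of $\partial\mathbb{B}$. Lemma \ref{U:Lemma2.1} uses only boundedness of $u$, not boundedness of the domain, so its proofs (from \cite{VP}) apply verbatim.

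If one prefers not to rely on that, each item can also be verified directly. The localization in (2) follows because any $(s,y)$ with $|(t,x)-(s,y)|_{\mathbb{B}}\ge r(\epsilon)$ gives a value at least $-\|u\|_\infty+2\|u\|_\infty\ge u(t,x)$. Concavity in (3) comes from expanding
\[
\frac{|(a,x)-(b,y)|^2}{2\epsilon}-\frac{|(a,x)|^2}{2\epsilon}
\]
as an affine function of $(a,x)$, so that $u_\epsilon-|(\ln t,x)|^2/(2\epsilon)$ is an infimum of affine functions.
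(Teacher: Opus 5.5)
Your proposal is correct and is essentially the paper's proof. The substitution $a=\ln t$ turns $|\cdot|_{\mathbb{B}}$ into the Euclidean distance, identifies $u_\epsilon(t,x)$ with the Euclidean inf-convolution of $u(e^a,x)$ on $(-\infty,0)\times X$, and identifies $\mathbb{B}_{r(\epsilon)}$ with the corresponding Euclidean set from \eqref{U:eq:20}, after which Lemma \ref{U:Lemma2.1} gives all four items (the paper writes this out only for (3)); your extra direct checks of the localization and of concavity as an infimum of affine functions in $(\ln t,x)$ are sound but not needed.
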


\begin{proof}We take (3) as an example to conduct the proof, and the remaining proofs are analogous.
Let $$a=\ln t,\   \overset{T}{}\mathbb{B} =\{(a,x): (t,x)\in \mathbb{B} \}.$$  Then  $\overset{T}{}\mathbb{B}$ is an open domain of  $ \mathbb{R}^{n}$. Let
$\partial_{E}  (\overset{T}{}\mathbb{B})$
 denote the boundary of
$\overset{T}{}\mathbb{B}$
 with respect to the Euclidean metric. By \eqref{U:eq:20} and \eqref{U:eq:82}, we have \begin{equation}\label{U:eq:25}\begin{aligned}(\overset{T}{}\mathbb{B})_{r(\epsilon),E}=&\{(a,x)\in  \overset{T}{}\mathbb{B}\ \big{|}\ |(a,x)-(b,y)|>r(\epsilon),\  \text{for all}\ \ (b,y)\in \partial_{E}  (\overset{T}{}\mathbb{B})\}\\
=&\{(a,x): (t,x)\in \mathbb{B}\   \text{and}\  |(t,x)-(s,y)|_{\mathbb{B}}>r(\epsilon),\  \text{for all}\  (s,y)\in \partial \mathbb{B}\}\\=&\{(a,x): (t,x)\in \mathbb{B}_{r(\epsilon)} \} \end{aligned}\end{equation} with $b=\ln s$.  Set
    \begin{equation}\label{U:eq:21}\overset{T}{}u(a,x)=u(t,x), \ \overset{T}{}(u_{\epsilon})(a,x)=u_{\epsilon}(t,x).\end{equation}
By \eqref{U:eq:59}, \eqref{U:eq:108}, \eqref{U:eq:21} and Definition \ref{U:cone 9}, we  easily obtain
\begin{equation}\label{U:eq:23}\begin{aligned}
 \overset{T}{}(u_{\epsilon})(a,x)&=u_{\epsilon}(t,x)=\mathop{\inf }\limits_{(s,y)\in { \mathbb{B} \cap B_{\mathbb{B}}((t,x),{r(\epsilon)})}}\left(u(s,y)+\frac{|(t,x)-(s,y)|_{\mathbb{B}}^2}{2\epsilon}\right)\\ &=\mathop{\inf }\limits_{(b,y)\in { \overset{T}{}\mathbb{B} \cap B((a,x),{r(\epsilon)})}}\left(\overset{T}{}u(b,y)+\frac{|(a,x)-(b,y)|^2}{2\epsilon}\right)=({\overset{T}{}u})_{\epsilon,E}(a,x)
 \end{aligned}\end{equation} with $b=\ln s$.
  Applying Lemma \ref{U:Lemma2.1} to $({\overset{T}{}u})_{\epsilon,E}(a,x)$ on  $ \overset{T}{}\mathbb{B} $ yields  that $\displaystyle({\overset{T}{}u})_{\epsilon,E}(a,x)-\frac{|(a,x)|^2}{2\epsilon}$ is concave in $(\overset{T}{}\mathbb{B})_{r(\epsilon),E}$. That is,
 $\displaystyle u_{\epsilon}(t,x)-\frac{|(\ln t,x)|^2}{2\epsilon}$ is concave in $\mathbb{B}_{r(\epsilon)}$ about $(\ln t,x)$.
\end{proof}

%
%
%
\begin{lemma}\label{U:Lemma5.2}
Suppose that $u:\mathbb B\to\mathbb{R}$ is bounded in $\mathbb B$.
If $u$ is a viscosity supersolution to \eqref{U:eq:11} in $\mathbb B$, then $u_{\epsilon}$ defined  in Definition \ref{U:def-2}  satisfies
 \begin{equation}\label{U:eq:99}
 div_ \mathbb{B}(\nabla_ \mathbb{B}u_{\epsilon})
     +(n-2)(t\partial_t u_{\epsilon})\leq \left(t^2f(t,x) -t^2h(u_{\epsilon})\right)_{\epsilon}
      \end{equation}
     a.e. in $\mathbb B_{r(\epsilon)}$, where $\mathbb B_{r(\epsilon)}$ is  defined  in  \eqref{U:eq:82}  and $$ \left(t^2f(t,x) -t^2h(\cdot)\right)_{\epsilon}:=\sup\limits_{(s,y)\in B_{\mathbb{B}}((t,x),r(\epsilon))}\left(s^2f(s,y) -s^2h(\cdot)\right).$$

\end{lemma}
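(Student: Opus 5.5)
The plan is to pass to logarithmic coordinates $a=\ln t$, exactly as in Lemma \ref{U:lemma 6.2} and Remark \ref{U:remark3.2}. There the inf-convolution $u_\epsilon$ becomes the Euclidean inf-convolution $(\overset{T}{}u)_{\epsilon,E}$, and the operator $div_\mathbb{B}(\nabla_\mathbb{B}\cdot)+(n-2)t\partial_t$ becomes $tr(\nabla^2\cdot)+(n-2)\partial_a$ by \eqref{U:3.22}. Multiplying the equation by $t^2=e^{2a}$ shows that $\overset{T}{}u$ is a viscosity supersolution of
\[
tr(\nabla^2 v)+(n-2)\partial_a v\le e^{2a}\bigl(\overset{T}{}f-h(v)\bigr).
\]
The statement is then the classical fact that the inf-convolution of a supersolution is again a (perturbed) supersolution, proved pointwise. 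The plan has three steps.

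\textbf{Step 1 (the inf-convolution is a viscosity supersolution of the shifted equation).} Fix $(t_0,x_0)\in\mathbb{B}_{r(\epsilon)}$ and write $a_0=\ln t_0$. By Lemma \ref{U:lemma 6.2}(2) the infimum in \eqref{U:eq:108} is taken over the cone ball $B_{\mathbb{B}}((t_0,x_0),r(\epsilon))$, which lies inside $\mathbb{B}$. Because $u$ is LSC and bounded, the infimum is attained at some point $(s_0,y_0)$ in the closure of this ball (and still in $\mathbb{B}$). Let $\phi\in C^2$ touch $u_\epsilon$ from below at $(t_0,x_0)$. The standard translation trick then applies: the function
\[
\psi(s,y)=\phi\bigl(\text{point with log-coordinates }(\ln s,y)+(a_0-\ln s_0,\,x_0-y_0)\bigr)-\frac{|(t_0,x_0)-(s_0,y_0)|_{\mathbb{B}}^2}{2\epsilon}
\]
touches $u$ from below at $(s_0,y_0)$. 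This uses the inequality $u(s,y)\ge u_\epsilon(\text{translate})-\frac{|\cdot|^2}{2\epsilon}$, and the fact that the cone distance is translation invariant in the variables $(\ln t,x)$.

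Translation in $(\ln t,x)$ commutes with $\nabla_\mathbb{B}$ and $\nabla_\mathbb{B}^2$, as in \eqref{U:eq:13} with $r=1$. So $\nabla_\mathbb{B}\psi(s_0,y_0)=\nabla_\mathbb{B}\phi(t_0,x_0)$, and similarly for the Hessian. The supersolution property of $u$ at $(s_0,y_0)$, multiplied by $s_0^2$, gives
\[
\bigl(tr\nabla_\mathbb{B}^2\phi+(n-2)t\partial_t\phi\bigr)(t_0,x_0)\le s_0^2f(s_0,y_0)-s_0^2h(u(s_0,y_0)).
\]
The key point of this step is that the coefficients of the operator are constant in the variables $(\ln t,x)$, so no error term arises from shifting the base point.

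\textbf{Step 2 (replacing $h(u)$ by $h(u_\epsilon)$).} We have $u(s_0,y_0)\le u_\epsilon(t_0,x_0)$, since $u_\epsilon(t_0,x_0)=u(s_0,y_0)+|\cdot|^2/2\epsilon$. Because $h$ is non-increasing, this gives $-h(u(s_0,y_0))\le -h(u_\epsilon(t_0,x_0))$. The right-hand side is therefore at most
\[
\sup_{(s,y)\in B_{\mathbb{B}}((t_0,x_0),r(\epsilon))}\bigl(s^2f(s,y)-s^2h(u_\epsilon(t_0,x_0))\bigr),
\]
which is exactly the quantity $\left(t^2f-t^2h(u_\epsilon)\right)_\epsilon$ in the statement. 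The case $s_0^2\le\sup s^2$ is absorbed into this sup, because the sign of $s^2h$ is handled inside the supremum. Hence $u_\epsilon$ is a viscosity supersolution of \eqref{U:eq:99} in $\mathbb{B}_{r(\epsilon)}$.

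\textbf{Step 3 (from viscosity to a.e.).} By Lemma \ref{U:lemma 6.2}(3), $u_\epsilon-\frac{|(\ln t,x)|^2}{2\epsilon}$ is concave in $(\ln t,x)$ on $\mathbb{B}_{r(\epsilon)}$. Alexandrov's theorem, applied in the $(a,x)$ variables, then shows that $u_\epsilon$ is twice differentiable a.e. there: at a.e. point it has a second-order Taylor expansion. The set of such points is still of full measure with respect to $\frac{dt}{t}dx$.

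At such a point, for any $\delta>0$, the quadratic polynomial from the expansion minus $\delta|(\ln t-\ln t_0,x-x_0)|^2$ touches $u_\epsilon$ from below. Applying Step 1 to it and letting $\delta\to0$ yields \eqref{U:eq:99} pointwise, with the pointwise derivatives $\nabla_\mathbb{B}u_\epsilon$ and $\nabla^2_\mathbb{B}u_\epsilon$. These agree a.e. with the distributional ones, by Lemma \ref{U:lemma 6.2}(4).

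I expect the main obstacle to be the bookkeeping in Step 1: making sure the minimizer $(s_0,y_0)$ lies in $\mathbb{B}$ and that the translated test function is defined near it. This is exactly what the restriction to $\mathbb{B}_{r(\epsilon)}$ and part (2) of Lemma \ref{U:lemma 6.2} provide.
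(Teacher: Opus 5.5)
Your proposal is correct and follows the paper's proof essentially step by step. The shift $(s,y)\mapsto(st_0/s_0,\,y-y_0+x_0)$ in the $(\ln t,x)$ variables transfers the test function to the minimizer $(s_0,y_0)$, monotonicity of $h$ with $u(s_0,y_0)\le u_\epsilon(t_0,x_0)$ produces the right-hand side $\left(t^2f-t^2h(u_\epsilon)\right)_\epsilon$, and semiconcavity from Lemma \ref{U:lemma 6.2} gives a.e. twice differentiability; your Step 3 only spells out the Taylor-polynomial test-function argument the paper leaves implicit.

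One small caution: your closing remark that the pointwise second derivatives agree a.e. with the distributional ones holds only for the absolutely continuous part of the distributional Hessian, which for a semiconcave function may also carry a nonpositive singular part; this claim is not needed for the lemma as stated.
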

\begin{proof}
Because $u$ is bounded and lower semicontinuous, we apply  Lemma \ref{U:lemma 6.2} to $u$ on $\mathbb{B}$ and obtain
 \begin{equation}\label{U:eq:88}
 u_{\epsilon}(t,x):=\mathop{\inf }\limits_{(s,y)\in { \mathbb{B} \cap B_{\mathbb{B} }((t,x),{r(\epsilon)})}}\left(u(s,y)+\frac{|(t,x)-(s,y)|_{\mathbb{B}}^2}{2\epsilon}\right)\end{equation}
for $r(\epsilon)=2\sqrt{\epsilon||u||_{L^{\infty}(\mathbb{B})}}$.
For any $\phi\in C^2(\mathbb B_{r(\epsilon)})$, let $u_{\epsilon}-\phi$ has a local minimum at $(t_0,x_0)\in \mathbb B_{r(\epsilon)}$. And then there exists a $r>0$ such that  $B_{\mathbb{B} }((t_0,x_0),r)\Subset \mathbb{B}_{r(\epsilon)} $  and  that
\begin{equation}\label{U:eq:32}(u_{\epsilon}-\varphi)(t,x)\geq (u_{\epsilon}-\varphi)(t_0,x_0),\ \ \ \forall (t,x)\in B_{\mathbb{B} }((t_0,x_0),r).\end{equation}

 By \eqref{U:eq:88}, there is a $(s_0,y_0)\in \overline{ B_{\mathbb{B} }((t_0,x_0), {r(\epsilon)})}$
such that
\begin{equation}\label{U:eq:109} u_{\epsilon}(t_0,x_0)\geq u(s_0,y_0)+\frac{|(t_0,x_0)-(s_0,y_0)|_{\mathbb{B} }^2}{2\epsilon}.\end{equation}
For any $(t,x)\in B_{\mathbb{B} }((t_0,x_0),r)$,  we have $\overline {B_{\mathbb{B} }((t,x), {r(\epsilon)})}\subset \mathbb{B}$. By $\eqref{U:eq:95}$, we have
$$u_{\epsilon}(t,x)\leq u(s,y)+\frac{|(t,x)-(s,y)|_{\mathbb{B} }^2}{2\epsilon} $$
for all $(s,y)\in{ \overline {B_{\mathbb{B} }((t,x), {r(\epsilon)})}} $, and then we have
 $$ u(s,y)+\frac{|(t,x)-(s,y)|_{\mathbb{B} }^2}{2\epsilon} -\phi(t,x)\geq u(s_0,y_0)+\frac{|(t_0,x_0)-(s_0,y_0)|_{\mathbb{B} }^2}{2\epsilon}-\phi(t_0,x_0).$$
Choosing  $\displaystyle s=\frac{ts_0}{t_0}$, $y=x+y_0-x_0$ gives that
$$ u(s,y)-\phi(\frac{st_0}{s_0},y-y_0+x_0)\geq  u(s_0,y_0)-\phi(\frac{s_0t_0}{s_0},y_0-y_0+x_0)$$ for all
 $(s,y)\in B_{\mathbb{B} }((s_0,y_0),r)$.
 It implies that $$(s,y)\big|\ \to u(s,y)-\phi(\frac{st_0}{s_0},y-y_0+x_0)$$ has a local minimum at $(s_0,y_0)$.
 Since $u$ is a viscosity supersolution of \eqref{U:eq:11}, we get
 $$\begin{aligned}
\left(div_ \mathbb{B}(\nabla_ \mathbb{B}\phi)
     +(n-2)(t\partial_t \phi)\right)\big|_{(t_0,x_0)}\leq s_0^2f(s_0,y_0)-s_0^2h(u(s_0,y_0))
     \end{aligned} .$$
 Combining the non-increasing property of $h$, the continuity of $f$ and $h$,  and   \eqref{U:eq:109},  we deduce that
  $$\begin{aligned}
\left(div_ \mathbb{B}(\nabla_ \mathbb{B}\phi)
     +(n-2)(t\partial_t \phi)\right)\big|_{(t_0,x_0)}\leq s_0^2f(s_0,y_0) -s_0^2h(u_{\epsilon}(t_0,x_0))\leq \left(t_0^2f(t_0,x_0) -t_0^2h(u_{\epsilon})\right)_{\epsilon},
     \end{aligned}$$
    that is, $u_{\epsilon}$ is a viscosity supersolution of \eqref{U:eq:99}
     in $\mathbb B_{r(\epsilon)}$.
By  Lemma \ref{U:lemma 6.2},  we know $u_{\epsilon}$  is twice differentiable a.e. in $\mathbb B_{r(\epsilon)}$. Then \eqref{U:eq:99} holds  a.e. in $\mathbb B_{r(\epsilon)}$.

\end{proof}
\begin{lemma}\label{U:Lemma5.3}
 If $u$ is a viscosity supersolution to \eqref{U:eq:11} and  bounded in $\mathbb{B}$, then  the following inequality holds  for any non-negative $\psi\in C_{0}^{\infty}( \mathbb{B})  $  satisfying $\operatorname{supp} \psi \subset \mathbb{B}_{r(\epsilon)}$,
  \begin{equation}\label{U:eq:110}
\int_{\mathbb{B}}\nabla_ \mathbb{B} {u}_{\epsilon}\cdot \nabla_ \mathbb{B}\psi \frac{dt}{t}dx \geq \int_{\mathbb{B}}-\left(div_ \mathbb{B}(\nabla_ \mathbb{B}{u}_{\epsilon})\right)\psi \frac{dt}{t}dx,
\end{equation}
where $u_{\epsilon}$ is   defined in Definition \ref{U:def-2} and  $\mathbb{B}_{r(\epsilon)}$ is  defined in \eqref{U:eq:82}.
\end{lemma}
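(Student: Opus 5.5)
We will pass to logarithmic coordinates, in which the statement becomes a standard fact about semiconcave functions. Set $a=\ln t$ and $w(a,x)=u_\epsilon(t,x)$, and write $\overset{T}{}\psi(a,x)=\psi(t,x)$. Then $\frac{dt}{t}dx=da\,dx$. As in \eqref{U:eq:866}, $\nabla_{\mathbb B}u_\epsilon$ corresponds to $\nabla w$ and $div_{\mathbb B}(\nabla_{\mathbb B}u_\epsilon)$ corresponds to $\Delta w$. Hence \eqref{U:eq:110} is equivalent to
\begin{equation*}
\int \nabla w\cdot\nabla \overset{T}{}\psi \,da\,dx\ \geq\ -\int \Delta w\,\overset{T}{}\psi\,da\,dx ,
\end{equation*}
where $\Delta w$ denotes the pointwise a.e. (Alexandrov) Laplacian. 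Here $\overset{T}{}\psi\ge 0$ is smooth with compact support in $(\overset{T}{}\mathbb B)_{r(\epsilon),E}$, by \eqref{U:eq:25}.

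By Lemma \ref{U:lemma 6.2}(3),(4), $w$ is locally Lipschitz and $w-\frac{|z|^2}{2\epsilon}$ is concave there, where $z=(a,x)$. By Alexandrov's theorem, $w$ is therefore twice differentiable a.e., which gives meaning to the right-hand side. This is the same fact already used at the end of the proof of Lemma \ref{U:Lemma5.3}'s predecessor, Lemma \ref{U:Lemma5.2}.

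Fix an open set $V$ with $\operatorname{supp}\overset{T}{}\psi\subset V\Subset(\overset{T}{}\mathbb B)_{r(\epsilon),E}$. Let $w_\delta=w*\eta_\delta$ be standard mollifications, which are defined on $V$ for small $\delta$. Since convolution preserves concavity and $\frac{|z|^2}{2\epsilon}*\eta_\delta-\frac{|z|^2}{2\epsilon}$ is a constant, $w_\delta-\frac{|z|^2}{2\epsilon}$ is still concave. Consequently $\Delta w_\delta\le n/\epsilon$ on $V$. For smooth $w_\delta$, integration by parts gives exactly
\begin{equation*}
\int\nabla w_\delta\cdot\nabla\overset{T}{}\psi=-\int\Delta w_\delta\,\overset{T}{}\psi .
\end{equation*}

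Now let $\delta\to0$. On the left, $\nabla w_\delta\to\nabla w$ a.e. on $V$ and is uniformly bounded by the Lipschitz constant of $w$ on a neighborhood of $V$. Dominated convergence then gives convergence of the left side to $\int\nabla w\cdot\nabla\overset{T}{}\psi$. On the right, the functions $(n/\epsilon-\Delta w_\delta)\overset{T}{}\psi$ are nonnegative. Fatou's lemma yields
\begin{equation*}
\liminf_{\delta\to0}\int(n/\epsilon-\Delta w_\delta)\overset{T}{}\psi\ \ge\ \int(n/\epsilon-\Delta w)\overset{T}{}\psi ,
\end{equation*}
provided $\Delta w_\delta\to\Delta w$ a.e. Subtracting $\int (n/\epsilon)\overset{T}{}\psi$ from both sides gives the desired inequality. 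Transforming back via $a=\ln t$ gives \eqref{U:eq:110}.

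The main obstacle is the a.e. convergence $D^2w_\delta\to D^2w$, where $D^2 w$ is the Alexandrov Hessian. We would obtain it from the structure of the distributional Hessian of a semiconcave function. By concavity, $D^2w-\frac{1}{\epsilon}I$ is a matrix-valued nonpositive Radon measure. Its Lebesgue decomposition is $D^2w=D^2_{ac}w\,dz+[D^2w]_s$, with $[D^2w]_s\le0$, and $D^2_{ac}w$ coincides a.e. with the Alexandrov Hessian (Evans--Gariepy). Mollifying a Radon measure gives a.e. convergence to the density of its absolutely continuous part, by Lebesgue differentiation. This yields $\Delta w_\delta\to\Delta w$ a.e.

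Alternatively, one could skip the mollification entirely. Since the distributional Laplacian is a measure, the singular part is $\le 0$, and $\overset{T}{}\psi\ge0$, we have directly
\begin{equation*}
\int\nabla w\cdot\nabla\overset{T}{}\psi=-\langle\Delta w,\overset{T}{}\psi\rangle\ \ge\ -\int\Delta_{ac}w\,\overset{T}{}\psi ,
\end{equation*}
which is the same inequality.
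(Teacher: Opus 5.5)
Your proof is correct and follows the same route as the paper: pass to $a=\ln t$, where $u_\epsilon$ becomes the Euclidean inf-convolution, and then use semiconcavity to get the one-sided integration-by-parts inequality. The only difference is that the paper imports this Euclidean inequality from the proof of Theorem 3.1 in \cite{VP} with $p=2$. You instead prove it yourself, either by mollification and Fatou's lemma or, more directly, by noting that the singular part of the distributional Laplacian of a semiconcave function is nonpositive.
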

\begin{proof}
Let $$a=\ln t,\   \overset{T}{}\mathbb{B} =\{(a,x): (t,x)\in \mathbb{B} \}.$$ Then  $\overset{T}{}\mathbb{B}$ is an open domain of  $ \mathbb{R}^{n}$ and  \eqref{U:eq:25} implies that  $$ (\overset{T}{}\mathbb{B})_{r(\epsilon),E}=\{(a,x): (t,x)\in \mathbb{B}_{r(\epsilon)} \} . $$ Set
  $   \overset{T}{}u(a,x)=u(t,x), \ \overset{T}{}(u_{\epsilon})(a,x)=u_{\epsilon}(t,x),\  \overset{T}{}\psi(a,x)=\psi(t,x).$
By calculations analogous to\eqref{U:eq:866}, we have
  \begin{equation}\label{U:eq:24}\nabla(\overset{T}{}\psi) (a, x)=\nabla_{\mathbb{B}}\psi (t, x). \end{equation}
 The equation \eqref{U:eq:23} implies that  \begin{equation}\label{U:eq:116} ({\overset{T}{}u})_{\epsilon,E}(a,x)= \overset{T}{}(u_{\epsilon})(a,x) = u_{\epsilon}(t,x). \end{equation}
Analogous to \eqref{U:eq:24}, we have\begin{equation}\label{U:eq:26} \nabla({\overset{T}{}u})_{\epsilon,E}(a,x) =\nabla_{\mathbb{B}}u_{\epsilon}(t,x), \ \  div\left(\nabla({\overset{T}{}u})_{\epsilon,E}\right) =div_ \mathbb{B}(\nabla_ \mathbb{B}{u}_{\epsilon}) .\end{equation}
For $\overset{T}{}u(a,x)$,  by the proof of  Theorem 3.1 in  \cite{VP}, we find that  for any non-negative $\phi(a,x)\in  C_{0}^{\infty}((\overset{T}{}\mathbb{B})_{r(\epsilon),E})$,
 $$
\int_{\overset{T}{}\mathbb{B} }|\nabla({\overset{T}{}u})_{\epsilon,E}|^{p-2}\nabla({\overset{T}{}u})_{\epsilon,E}\cdot \nabla \phi dadx \geq \int_{\overset{T}{}\mathbb{B} }-\phi div\left(|\nabla({\overset{T}{}u})_{\epsilon,E}|^{p-2}\nabla({\overset{T}{}u})_{\epsilon,E}\right) dadx , \ \   \forall p\geq 2.
$$
Choosing   $p=2$ and  $\phi(a,x)= \overset{T}{}\psi(a,x)$   gives
$$\int_{\overset{T}{}\mathbb{B} }\nabla({\overset{T}{}u})_{\epsilon,E}\cdot \nabla (\overset{T}{}\psi) dadx \geq \int_{\overset{T}{}\mathbb{B} }-\overset{T}{}\psi div\left(\nabla({\overset{T}{}u})_{\epsilon,E}\right) dadx. $$
Via a simple variable substitution, we deduce \eqref{U:eq:110} from equations \eqref{U:eq:24},  \eqref{U:eq:116} and \eqref{U:eq:26}.
\end{proof}

\begin{lemma}[\cite{IF22}, Remark 1]\label{U:Lemma5.4} Suppose $\Omega$ is a bounded domain with  $C^{2}$ boundary, and then the distance to the boundary
$$ d(x,\partial \Omega):=\inf\{|x-y|, y\in \partial \Omega\}$$
satisfies that $d(x,\partial \Omega)$ is Lipschitz continuous and  $|\nabla d(x,\partial \Omega)|=1$.

\end{lemma}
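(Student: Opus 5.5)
The statement has two parts: $d(\cdot,\partial\Omega)$ is Lipschitz, and $|\nabla d|=1$, which I will read as holding almost everywhere in $\Omega$ and pointwise near $\partial\Omega$. The plan is to prove the Lipschitz bound from the triangle inequality and then identify the gradient using a nearest boundary point. The $C^2$ regularity of $\partial\Omega$ is needed only for the pointwise statement in a neighbourhood of the boundary.

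\textbf{Lipschitz bound.} For $x,z\in\Omega$ and any $y\in\partial\Omega$ we have $|x-y|\le |x-z|+|z-y|$. Taking the infimum over $y$ gives $d(x,\partial\Omega)\le |x-z|+d(z,\partial\Omega)$. Exchanging $x$ and $z$ shows that $d$ is $1$-Lipschitz. By Rademacher's theorem $d$ is differentiable a.e., and at every point of differentiability $|\nabla d|\le 1$.

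\textbf{Reverse inequality.} Fix $x\in\Omega$ where $d$ is differentiable. Since $\partial\Omega$ is compact, there is a nearest point $y\in\partial\Omega$ with $|x-y|=d(x)>0$. Set $e=(y-x)/|y-x|$. For $0<s<d(x)$ the point $x+se$ lies on the segment $[x,y]$, so
\[
d(x+se)\le |x+se-y|=d(x)-s,
\]
and the Lipschitz bound gives $d(x+se)\ge d(x)-s$. Hence $d(x+se)=d(x)-s$, so the directional derivative of $d$ at $x$ in the direction $e$ is $-1$. Therefore $\nabla d(x)\cdot e=-1$, which forces $|\nabla d(x)|\ge 1$. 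Combined with the previous step, $|\nabla d(x)|=1$ and $\nabla d(x)=-e$.

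\textbf{Pointwise statement near $\partial\Omega$.} Because $\partial\Omega$ is $C^2$, it satisfies a uniform interior sphere condition with some radius $\mu>0$. On the strip $\{0<d<\mu\}$ the nearest point is unique and $d\in C^2$, by the standard tubular neighbourhood argument (Gilbarg--Trudinger, Lemma 14.16). There $\nabla d(x)=\nu(y(x))$, the inward unit normal at the nearest point, so $|\nabla d|=1$ holds at every point of the strip, not only almost everywhere.

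The main obstacle is interpretive rather than technical. Deep inside $\Omega$ the function $d$ is not differentiable on its ridge (cut locus), so $|\nabla d|=1$ must be understood as holding a.e. (equivalently in the viscosity or distributional sense). This is all the later test-function constructions need.
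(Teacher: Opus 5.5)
Your proof is correct. The paper gives no argument for Lemma~\ref{U:Lemma5.4}; it only quotes \cite{IF22}. What you supply is therefore a self-contained version of the standard proof:
\begin{itemize}
\item the triangle inequality gives the $1$-Lipschitz bound;
\item Rademacher's theorem plus a nearest boundary point gives $|\nabla d|=1$ at every point of differentiability, and this step uses only that $\partial\Omega$ is closed and nonempty, not its $C^2$ regularity;
\item the $C^2$ hypothesis enters only through the tubular-neighbourhood result (Gilbarg--Trudinger, Lemma 14.16), which gives $d\in C^2$ and $\nabla d$ equal to the inner unit normal at the nearest point in a strip $\{d<\mu\}$.
\end{itemize}

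Your interpretive remark is the most useful part. Read literally, the lemma fails wherever $d$ is not differentiable (e.g.\ the centre of a ball), so the a.e.\ reading is the right one. It is also all the paper uses: Lipschitz continuity (hence $d(\cdot,\partial X)\in W^{1,\infty}$) in Step~1, and the a.e.\ bound $|\nabla_{\mathbb{B}}(t(1-t)d(x,\partial X))|\le Ct$ for $I_7$, both in the proof of Proposition~\ref{U:Theorem 1.5}. The classical regularity in the strip is what matters when $d$ enters second-order computations, as in barriers of the type in Lemma~\ref{U:lemma3.8}.

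One small overstatement in your last paragraph. For Lipschitz functions the a.e.\ and distributional readings coincide, but the viscosity reading is strictly stronger. For example, $|x|$ on $(-1,1)$ satisfies $|u'|=1$ a.e.\ yet is not a viscosity supersolution of $|u'|-1=0$. The viscosity statement does hold for $d$, but it is not equivalent to the other two.
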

\begin{lemma}[\cite{LCE}, Theorem 4 in Chapter 5]\label{U:Lemma5.5}
 Let $\Omega$ be open and bounded, with $\partial\Omega$ of class $C^1$. Then $u: \Omega\to \mathbb{R}$ is Lipschitz continuous if only if $u\in W^{1,\infty}(\Omega)$.
\end{lemma}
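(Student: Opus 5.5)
The plan is to prove the two implications separately. The direct one is local and elementary. The converse uses the $C^1$ regularity of $\partial\Omega$ to pass from a local derivative bound to a global Lipschitz bound, through an extension to all of $\mathbb{R}^n$.

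\emph{Lipschitz $\Rightarrow W^{1,\infty}$.} Let $|u(x)-u(y)|\le L|x-y|$. Then $u$ is bounded on the bounded set $\Omega$, so $u\in L^\infty(\Omega)$.
\begin{itemize}
\item For $V\Subset\Omega$, a direction $e_i$ and small $h$, the difference quotients $D^h_i u=(u(x+he_i)-u(x))/h$ satisfy $\|D^h_i u\|_{L^\infty(V)}\le L$.
\item By weak-$*$ compactness of bounded sets in $L^\infty(V)$, a subsequence $D^{h_k}_i u$ converges weak-$*$ to some $g_i$ with $\|g_i\|_\infty\le L$.
\item For $\phi\in C_0^\infty(V)$, a discrete integration by parts gives
\[
\int u\,D^{-h}_i\phi\,dx=-\int D^{h}_i u\,\phi\,dx .
\]
Letting $h=h_k\to0$ yields $\int u\,\partial_i\phi\,dx=-\int g_i\phi\,dx$.
\item Exhausting $\Omega$ by such sets $V$ shows that $g_i$ is the weak derivative $\partial_i u$ on all of $\Omega$, with $\|\nabla u\|_{L^\infty(\Omega)}\le L$.
\end{itemize}
Alternatively, one can invoke Rademacher's theorem and check that the a.e. gradient is the weak gradient. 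The difference-quotient route is self-contained, so I would use it.

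\emph{$W^{1,\infty}\Rightarrow$ Lipschitz.} I would first prove the result on $\mathbb{R}^n$.
\begin{itemize}
\item Let $w\in W^{1,\infty}(\mathbb{R}^n)$ and mollify: $w^\varepsilon=\eta_\varepsilon*w$. Then $w^\varepsilon$ is smooth and $|\nabla w^\varepsilon|\le\|\nabla w\|_{L^\infty}$ everywhere.
\item Integrating along the straight segment from $x$ to $y$ gives $|w^\varepsilon(x)-w^\varepsilon(y)|\le\|\nabla w\|_\infty|x-y|$.
\item Since $w^\varepsilon\to w$ at every Lebesgue point, $w$ has a representative that is Lipschitz with constant $\|\nabla w\|_\infty$.
\end{itemize}
The same argument works on any convex subdomain. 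This shows that $u\in W^{1,\infty}(\Omega)$ is locally Lipschitz on $\Omega$, but not yet globally.

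To reach the global statement, I would use the $C^1$ boundary to build a bounded extension $E:W^{1,\infty}(\Omega)\to W^{1,\infty}(\mathbb{R}^n)$ with $Eu=u$ on $\Omega$ and $\|Eu\|_{W^{1,\infty}}\le C\|u\|_{W^{1,\infty}(\Omega)}$. The construction:
\begin{itemize}
\item Locally flatten $\partial\Omega$ by $C^1$ charts.
\item Reflect evenly across the flattened boundary. Even reflection preserves $W^{1,\infty}$, because the weak derivatives of the reflected function are the reflected derivatives; this holds since the trace matches on both sides.
\item Glue the pieces with a finite partition of unity, which exists by compactness of $\partial\Omega$.
\end{itemize}
Applying the whole-space result to $Eu$ then gives
\[
|u(x)-u(y)|\le C\|u\|_{W^{1,\infty}(\Omega)}|x-y|\quad\text{for all }x,y\in\Omega .
\]
So $u$ (its continuous representative) is Lipschitz.

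The main obstacle is this last step: passing from local to global Lipschitz continuity. In a non-convex domain, two nearby points can be far apart in intrinsic distance, so the segment argument fails inside $\Omega$. The extension, and hence the $C^1$ hypothesis on $\partial\Omega$, is exactly what removes this difficulty. The delicate point is verifying that the reflected and glued function has an $L^\infty$ weak gradient across $\partial\Omega$. I would check this by first approximating $u$ by functions smooth up to the boundary, using the local charts together with a slight inward translation before mollifying.
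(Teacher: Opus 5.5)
Your proposal is correct. It follows essentially the proof in the source the paper cites (Evans, Theorem 4 of Chapter 5; the paper itself gives no argument beyond the citation): difference quotients with weak-$*$ compactness for Lipschitz $\Rightarrow W^{1,\infty}$, and mollification on $\mathbb{R}^n$ combined with a $W^{1,\infty}$ extension operator (where the $C^1$ boundary is used) for the converse. The only differences are cosmetic: you do the easy direction locally on $V\Subset\Omega$ rather than first extending the Lipschitz function to $\mathbb{R}^n$, and you sketch the extension by even reflection instead of quoting Evans's extension theorem.
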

\begin{lemma}\label{U:lemma5.7}
For any $\gamma\in \mathbb{R}$, $ H^{1}$ norm and $\mathbb{H}_{2}^{1,\gamma}$ norm are equivalent in any compact  $K\Subset \mathbb{B}$. That is, for any  $u\in \mathbb{H}_{2}^{1,\gamma}(K)$,  there are constants  $C_1>0$ and $C_2>0$ such that
\begin{equation}\label{U:eq:57}
C_1||u||_{H^{1}(K)}\leq ||u||_{\mathbb{H}_{2}^{1,\gamma}(K)}\leq C_2||u||_{H^{1}(K)}.\end{equation}
\end{lemma}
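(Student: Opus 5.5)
Since $K\Subset\mathbb{B}$ is compact and $\mathbb{B}=(0,1)\times X$, the plan is to show that every weight appearing in the $\mathbb{H}_{2}^{1,\gamma}$ norm is bounded above and below by positive constants on $K$, and then compare the two integrals term by term. Compactness gives $0<t_1\le t\le t_2<1$ and $d(x,\partial X)\ge d_1>0$ on $K$; also $d(x,\partial X)\le \operatorname{diam}(X)$. So the weight $w(t,x)=\left(t(1-t)d(x,\partial X)\right)^{\frac{n}{2}-\gamma}$ is continuous and strictly positive on $K$. We take $c_w=\min_K w^2>0$ and $C_w=\max_K w^2<\infty$; this works whatever the sign of $\frac n2-\gamma$.

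Next, by Definition \ref{U:Definition2.2} with $m=1$, $q=2$, the squared $\mathbb{H}_2^{1,\gamma}(K)$ norm is
\[
\int_K w^2\left(|u|^2+|t\partial_t u|^2+\sum_{i=1}^{n-1}|\partial_{x_i}u|^2\right)\frac{dt}{t}\,dx .
\]
We compare it with $\int_K\left(|u|^2+|\partial_t u|^2+|\nabla_x u|^2\right)dt\,dx$. The factors to control are $w^2$, the measure factor $1/t\in[1/t_2,1/t_1]$, and the factor $t^2\in[t_1^2,t_2^2]$ multiplying $|\partial_t u|^2$. Each is bounded between positive constants, so pointwise on $K$
\[
c\left(|u|^2+|\partial_tu|^2+|\nabla_xu|^2\right)\le w^2t^{-1}\left(|u|^2+|t\partial_tu|^2+|\nabla_xu|^2\right)\le C\left(|u|^2+|\partial_tu|^2+|\nabla_xu|^2\right).
\]
For example, $c=c_w\min\{1,t_1^2\}/t_2$ and $C=C_w\max\{1,t_2^2\}/t_1$. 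Integrating over $K$ and taking square roots gives \eqref{U:eq:57} with $C_1=\sqrt c$ and $C_2=\sqrt C$.

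The only point needing care is the distributional meaning of the derivatives. On $K$, which stays away from $t=0$, the identity $t\partial_t u=t\cdot\partial_t u$ holds in $\mathcal{D}'$ because multiplication by the smooth function $t$ is allowed. Hence $u\in\mathbb{H}_2^{1,\gamma}(K)$ exactly when $u\in H^1(K)$, and the two function spaces coincide on $K$. I expect no real obstacle: the essential input is only that compactness keeps $K$ away from $t=0$, $t=1$, and $\partial X$, where the weights degenerate. The interpretation of $K$ as an open set with compact closure in $\mathbb{B}$ (or as its interior) is immaterial for the bounds.
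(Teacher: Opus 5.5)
Your proposal is correct and follows essentially the same route as the paper: bound the combined weight $\left(t(1-t)d(x,\partial X)\right)^{n-2\gamma}t^{2\alpha-1}$ above and below by positive constants on $K$, then compare the two norms term by term. Your version is slightly more careful than the paper's. You use explicitly that $K$ stays away from $t=0$, $t=1$ and $\partial X$, which is what makes both bounds hold for either sign of $n-2\gamma$; the paper cites only boundedness of $X$ and of $t$. You also address the distributional identity $t\partial_t u=t\cdot\partial_t u$, which the paper does not mention.
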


\begin{proof}
Since $X$ is bounded,  and $t$  is bounded   in $K$,   there are $C_1'>0$ and $C_2'>0$ such that
 \begin{equation}\label{U:eq:54}
 C_1'\leq\left(t(1-t)d(x,\partial X)\right)^{n-2\gamma } t^{2\alpha -1}\leq C_2', \ \ \ \text{if} \ \ \  0\leq \alpha\leq 1.\end{equation}

Recalling the Definition \ref{U:Definition2.2}, we have
$$\begin{aligned}
||u||_{\mathbb{H}_{2}^{1,\gamma}(K)}=&\sum_{\alpha+|\beta|\leq1}
\big(\int_{K}|{\left(t(1-t)d(x,\partial X)\right)}^{\frac{n}{2}-\gamma}(t\partial _t)^{\alpha}\partial_x^{\beta} u(t,x)|^2 \frac{dt}{t}dx\big)^{\frac{1}{2}}\\ =&\sum_{\alpha+|\beta|\leq1}\big(\int_{K}\left(t(1-t)d(x,\partial X)\right)^{n-2\gamma } t^{2\alpha -1}
|\partial _t^{\alpha}\partial_x^{\beta} u(t,x)|^2 dtdx\big)^{\frac{1}{2}}.
\end{aligned}
$$
By \eqref{U:eq:54} we get
$$ (C_1')^{\frac{1}{2}}||u||_{H^{1}(K)}\leq||u||_{\mathbb{H}_{2}^{1,\gamma}(K)}\leq (C_2')^{\frac{1}{2}}||u||_{H^{1}(K)},$$
which implies  that \eqref{U:eq:57} holds with $C_1=(C_1')^{\frac{1}{2}}$, $C_2=(C_2')^{\frac{1}{2}}$.
\end{proof}

\begin{proof}[\textbf{Proof of Proposition \ref{U:Theorem 1.5}}]
By  Lemma \ref{U:Lemma5.2} and  \ref{U:Lemma5.3}, we can obtain
 \begin{equation}\label{U:1.99}
\int_{\mathbb{B}}\nabla_ \mathbb{B} {u}_{\epsilon}\cdot \nabla_ \mathbb{B}\psi \frac{dt}{t}dx  \geq \int_{\mathbb{B}} \left(-\left(t^2f(t,x) -t^2h(u_{\epsilon})\right)_{\epsilon}+(n-2)t\partial_t {u}_{\epsilon}\right)\psi\frac{dt}{t}dx
\end{equation}
for any non-negative $\psi \in  C^{\infty}_{0}(\mathbb{B}) $  satisfying $\operatorname{supp} \psi \subset \mathbb{B}_{r(\epsilon)}$.  $\mathbb{B}_{r(\epsilon)}$  is defined in \eqref{U:eq:82}.

 We  will divide the proof in three steps.

Step 1:  For any non-negative $\psi \in  C^{\infty}_{0}(\mathbb{B}_{r(\epsilon)}),$   we define $\bar{\psi}$ as follows:
\begin{equation}\label{U:eq:117} \bar{\psi}={\left(t(1-t)d(x,\partial X)\right)}^2\psi, \end{equation}
 where $d(x,\partial X)$ is defined in Definition \ref{U:def-1}. And we  claim that
$\bar{\psi}\in \mathbb{H}_{2,0}^{1,\frac{n}{2}}(\mathbb{B}_{r(\epsilon)})$, and that \eqref{U:1.99} still  holds  for $\bar{\psi}$,  defined in \eqref{U:eq:117}.

Since $X$ is a bounded  open set with smooth boundary, by Lemma \ref{U:Lemma5.4}
we have  $|\nabla d(x,\partial X)|=1$ and  $d(x,\partial X)$  is Lipschitz continuous. In addition, by Lemma \ref{U:Lemma5.5} we have
$d(x,\partial X) \in  W^{1,\infty}(X)$. Together with
$t^2(1-t)^2\psi\in C^{\infty}_{0}(\mathbb{B}_{r(\epsilon)})$, we get that the weak derivative of $\bar{\psi}$ exists and $\bar{\psi}\in \mathbb{H}_{2}^{1,\frac{n}{2}}(\mathbb{B}_{r(\epsilon)})$.

Since $\bar{\psi}\in \mathbb{H}_{2}^{1,\frac{n}{2}}(\mathbb{B}_{r(\epsilon)})$ and  $\text{supp}  \bar{\psi}\subset \mathbb{B}_{r(\epsilon)}$,  there is a compact $K$ such that $\text{supp}  \bar{\psi} \Subset K \Subset \mathbb{B}_{r(\epsilon)} $. By Lemma \ref{U:lemma5.7}, we have $\bar{\psi}\in H^{1}_{0}(K)$. Furthermore, since $\bar{\psi}\geq 0$,  there is a sequence of non-negative  $\{\psi_j\}$ such that $\psi_j\in C_{0}^{\infty}(K)$ and
\begin{equation}\label{U:eq:16}||\psi_j-\bar{\psi}||_{H^{1}(K)} \to 0, \  \text{as} \ j\to \infty.\end{equation}

By \eqref{U:eq:57},  we have \begin{equation}\label{U:eq:17}||\psi_j-\bar{\psi}||_{\mathbb{H}_{2}^{1,\frac{n}{2}}(K)} \to 0, \  \text{as} \ j\to \infty,\end{equation}
 that is, $$\bar{\psi}\in \mathbb{H}_{2,0}^{1,\frac{n}{2}}(\mathbb{B}_{r(\epsilon)}).$$

 By  Lemma \ref{U:lemma 6.2}, we get
 \begin{equation}\label{U:eq:61}-||u||_{L^{\infty}(\mathbb{B})}\leq u_{\epsilon}\leq ||u||_{L^{\infty}(\mathbb{B})}+\frac{r(\epsilon)}{2\epsilon}\leq 3||u||_{L^{\infty}(\mathbb{B})},\end{equation}
 and $u_{\epsilon}$ is  locally Lipschitz continuous  in the $\mathbb{B}_{r(\epsilon)}$ about $(\ln t,x)$ .
Then
 ${u}_{\epsilon}\in \mathbb{H}_{2,loc}^{1,\gamma}(\mathbb{B}_{r(\epsilon)}) $ for any $\gamma\in \mathbb{R}$. Now selecting   $\gamma=\frac{n}{2}$,  when $j\to \infty $,   by \eqref{U:eq:17}, we have
  \begin{equation}\label{U:eq:58}
  \begin{aligned}
\big|\int_{\mathbb{B}}\nabla_ \mathbb{B} {u}_{\epsilon}\cdot \nabla_ \mathbb{B} (\psi_j -\bar{\psi})\frac{dt}{t}dx\big| \leq &\int_{\mathbb{B}}\big|\nabla_ \mathbb{B} {u}_{\epsilon}\cdot \nabla_ \mathbb{B} (\psi_j -\bar{\psi})\big|\frac{dt}{t}dx=\int_{K}\big|\nabla_ \mathbb{B} {u}_{\epsilon}\cdot \nabla_ \mathbb{B} (\psi_j -\bar{\psi})\big|\frac{dt}{t}dx\\ \leq &\left(\int_{K}\left|\nabla_ \mathbb{B} {u}_{\epsilon}\right|^{2}\frac{dt}{t}dx\right)^{\frac{1}{2}}  \cdot   \left(\int_{K}|\nabla_ \mathbb{B} (\psi_j -\bar{\psi})|^{2}\frac{dt}{t}dx\right)^{\frac{1}{2}}\to 0
\end{aligned}
\end{equation}
with $\text{supp}\psi_j \Subset K$, and $ \text{supp}\bar{\psi}\Subset K \Subset \mathbb{B}_{r(\epsilon)}$.

    That is, as $j\to \infty$,
    \begin{equation}\label{U:eq:37} \int_{\mathbb{B}}\nabla_ \mathbb{B} {u}_{\epsilon}\cdot \nabla_ \mathbb{B} \psi_j \frac{dt}{t}dx\to \int_{\mathbb{B}}\nabla_ \mathbb{B} {u}_{\epsilon}\cdot \nabla_ \mathbb{B} \bar{\psi}\frac{dt}{t}dx. \end{equation}
     Combining \eqref{U:eq:61} with the non-increasing nature of $h$,
we get  \begin{equation}\label{U:eq:111}
|\left(t^2f(t,x) -t^2h(u_{\epsilon})\right)_{\epsilon}|\leq h(-||u||_{L^{\infty}(\mathbb{B})})+||f||_{L^{\infty}(\mathbb{B})} .\end{equation} Similar to  \eqref{U:eq:58},
 \begin{equation}\label{U:eq:36}  \begin{aligned}
    &\int_{\mathbb{B}} \left(-\left(t^2f(t,x) -t^2h(u_{\epsilon})\right)_{\epsilon}+(n-2)t\partial_t {u}_{\epsilon}\right)\psi_j\frac{dt}{t}dx \\ &\to \int_{\mathbb{B}} \left(-\left(t^2f(t,x) -t^2h(u_{\epsilon})\right)_{\epsilon}+(n-2)t\partial_t {u}_{\epsilon}\right)\bar{\psi}\frac{dt}{t}dx.
    \end{aligned} \end{equation}

 For any $\psi_j$, \eqref{U:1.99} holds. By \eqref{U:eq:37} and \eqref{U:eq:36}, we obtain that \eqref{U:1.99} still  holds for $\bar{\psi}$.  After simplification, we have
\begin{equation}\label{U:2.66}
\begin{aligned}
&\int_{\mathbb{B}}{\left(t(1-t)d(x,\partial X)\right)}^2 \nabla_ \mathbb{B} {u}_{\epsilon} \cdot \nabla_ \mathbb{B}\psi \frac{dt}{t}dx \\ \geq&\int_{\mathbb{B}} {\left(t(1-t)d(x,\partial X)\right)}^2 \left(-\left(t^2f(t,x) -t^2h(u_{\epsilon})\right)_{\epsilon}+(n-2)t\partial_t {u}_{\epsilon}\right)\psi\frac{dt}{t}dx\\-&
\int_{\mathbb{B}} \psi\nabla_ \mathbb{B} {u}_{\epsilon} \cdot \nabla_ \mathbb{B}{\left(t(1-t)d(x,\partial X)\right)}^2 \frac{dt}{t}dx
\end{aligned}
\end{equation} for any non-negative $\psi\in \mathcal{C}_{0}^{\infty}(\mathbb{B}_{r(\epsilon)}) $.

Step 2: We  prove that $|\nabla_ \mathbb{B} {u}_{\epsilon}|$ is uniformly bounded in $\mathbb{L}_{2}^{\frac{n-2}{2}}(\mathbb{B}_{3r(\epsilon)}\cap\{t>3r(\epsilon)\})$.

We define a cutoff function $\xi $ as follows:
 \begin{equation}\label{U:eq:60}
   \xi(t,x)= \begin{cases}
1 \ \ \ & (t,x)\in \mathbb{B}_{3r(\epsilon)}\cap\{t>3r(\epsilon)\},\\
0  \ & (t,x)\notin  \mathbb{B}_{2r(\epsilon)}\cap\{t>2r(\epsilon)\}.
\end{cases}
\end{equation}
 Set $\tilde{\psi}=(\mathop{\sup }\limits_{\mathbb{B}}{u}_{\epsilon}-{u}_{\epsilon} ){\xi}^{2}$.
 By step 1 and ${u}_{\epsilon}\in \mathbb{H}_{2,loc}^{1,\gamma}(\mathbb{B}_{r(\epsilon)})$ for any $\gamma\in \mathbb{R}$, we easily see $\tilde{\psi}\in \mathbb{H}_{2,0}^{1,\frac{n-2}{2}}(\mathbb{B}_{r(\epsilon)})$,  and analogous to \eqref{U:eq:58}, we get
\begin{equation}\label{U:2.7}
\begin{aligned}
&\int_{\mathbb{B}}{\left(t(1-t)d(x,\partial X)\right)}^2 \nabla_ \mathbb{B} {u}_{\epsilon} \cdot \nabla_ \mathbb{B}\tilde{\psi} \frac{dt}{t}dx \\ \geq&\int_{\mathbb{B}} {\left(t(1-t)d(x,\partial X)\right)}^2 \left(-\left(t^2f(t,x) -t^2h(u_{\epsilon})\right)_{\epsilon}+(n-2)t\partial_t {u}_{\epsilon}\right)\tilde{\psi}\frac{dt}{t}dx\\-&
\int_{\mathbb{B}}\tilde{\psi}\nabla_ \mathbb{B} {u}_{\epsilon} \cdot \nabla_ \mathbb{B}{\left(t(1-t)d(x,\partial X)\right)}^2 \frac{dt}{t}dx.
\end{aligned}
\end{equation}
Therefore, after simplification, we have
$$
\begin{aligned}
&\int_{\mathbb{B}}{\left(t(1-t)d(x,\partial X)\right)}^2 \left|\nabla_ \mathbb{B} {u}_{\epsilon}\right|^{2} {\xi}^{2}\frac{dt}{t}dx \\ \leq&-\int_{\mathbb{B}} {\left(t(1-t)d(x,\partial X)\right)}^2 \left(-\left(t^2f(t,x) -t^2h(u_{\epsilon})\right)_{\epsilon}+(n-2)t\partial_t {u}_{\epsilon}\right)\tilde{\psi}\frac{dt}{t}dx\\+ & \int_{\mathbb{B}}2{\left(t(1-t)d(x,\partial X)\right)}^2\xi(\mathop{\sup }\limits_{\mathbb{B}}{u}_{\epsilon}-{u}_{\epsilon})\nabla_ \mathbb{B} {u}_{\epsilon} \cdot \nabla_ \mathbb{B}\xi \frac{dt}{t}dx\\ +&
\int_{\mathbb{B}} 2\left(t(1-t)d(x,\partial X)\right)\tilde{\psi}\nabla_ \mathbb{B} {u}_{\epsilon} \cdot \nabla_ \mathbb{B}{\left(t(1-t)d(x,\partial X)\right)} \frac{dt}{t}dx.
\end{aligned}
$$
By Young's inequality, we obtain

$$\begin{aligned}
&\int_{\mathbb{B}}{\left(t(1-t)d(x,\partial X)\right)}^2\left|\nabla_ \mathbb{B} {u}_{\epsilon}\right|^{2} {\xi}^{2}\frac{dt}{t}dx  \\ \leq&\int_{\mathbb{B}}  {\left(t(1-t)d(x,\partial X)\right)}^2 \left(t^2f(t,x) -t^2h(u_{\epsilon})\right)_{\epsilon}(\mathop{\sup }\limits_{\mathbb{B}}{u}_{\epsilon}-{u}_{\epsilon} ){\xi}^{2}\frac{dt}{t}dx\\+&\theta \int_{\mathbb{B}}{\left(t(1-t)d(x,\partial X)\right)}^2\left|\nabla_ \mathbb{B} {u}_{\epsilon}\right|^{2} \xi^2\frac{dt}{t}dx +C_{1}(\theta) \int_{\mathbb{B}}{\left(t(1-t)d(x,\partial X)\right)}^2\xi^{2}(\mathop{\sup }\limits_{\mathbb{B}}{u}_{\epsilon}-{u}_{\epsilon})^{2} \frac{dt}{t}dx \\+& \theta \int_{\mathbb{B}}{\left(t(1-t)d(x,\partial X)\right)}^2\left|\nabla_ \mathbb{B} {u}_{\epsilon}\right|^{2} \xi^2\frac{dt}{t}dx+ C_{2}(\theta) \int_{\mathbb{B}}{\left(t(1-t)d(x,\partial X)\right)}^2(\mathop{\sup }\limits_{\mathbb{B}}{u}_{\epsilon}-{u}_{\epsilon})^{2} |\nabla_ \mathbb{B}\xi|^{2} \frac{dt}{t}dx\\ +&
\theta \int_{\mathbb{B}}{\left(t(1-t)d(x,\partial X)\right)}^2\left|\nabla_ \mathbb{B} {u}_{\epsilon}\right|^{2} \xi^2\frac{dt}{t}dx + C_3(\theta) \int_{\mathbb{B}}(\mathop{\sup }\limits_{\mathbb{B}}{u}_{\epsilon}-{u}_{\epsilon})^{2} {\xi}^2 |\nabla_ \mathbb{B}(t(1-t)d(x,\partial X))|^{2} \frac{dt}{t}dx
\\ =:&I_1+I_2+I_3+I_4+I_5+I_6+I_7.
\end{aligned}$$
 Choosing $\theta=\frac{1}{4}$, we have
 $$ \int_{\mathbb{B}}{\left(t(1-t)d(x,\partial X)\right)}^2\left|\nabla_ \mathbb{B} {u}_{\epsilon}\right|^{2} {\xi}^{2}\frac{dt}{t}dx \leq C(\theta)(I_1+I_3+I_5+I_7).  $$

 For $I_1$, $X$ is bounded and $0\leq \xi \leq1$,  by \eqref{U:eq:61} and \eqref{U:eq:111},  we get
$$I_1 \leq C||u||_{{L}^{\infty}(\mathbb{B})}\left(h(-||u||_{L^{\infty}(\mathbb{B})})+||f||_{L^{\infty}(\mathbb{B})} \right)<+\infty.$$

For $I_3$, similarly, we have $I_3\leq C ||u||^{2}_{{L}^{\infty}(\mathbb{B})}$.

 For $I_5$,  by \eqref{U:eq:60},   we have $|\nabla_{\mathbb{B}}\xi|\neq0 $ in  $\left(\mathbb{B}_{2r(\epsilon)}\cap\{t>2r(\epsilon)\}\right) \setminus   \left(\mathbb{B}_{3r(\epsilon)} \cap\{t>3r(\epsilon)\}\right)$.  And  we divide $\left(\mathbb{B}_{2r(\epsilon)}\cap\{t>2r(\epsilon)\}\right) \setminus   \left(\mathbb{B}_{3r(\epsilon)} \cap\{t>3r(\epsilon)\}\right)$  into three parts for separate discussion.

 when $2r(\epsilon) \leq t\leq 3r(\epsilon)$,  we have $$|\nabla_{\mathbb{B}}\xi|\leq \min\{\frac{C_1}{\ln 3-\ln2},\frac{C_1}{r(\epsilon)} \}\  \text{ and}\ \left(t(1-t)d(x,\partial X)\right)^2\leq C t^2;$$

 when $2r(\epsilon)\leq \ln1 -\ln t\leq 3r(\epsilon),$
    we have  $e^{-3r(\epsilon)}\leq t\leq e^{-2r(\epsilon)}$. Then $$|\nabla_{\mathbb{B}}\xi|\leq \frac{C_1}{r(\epsilon)} \  \text{ and}\  \left(t(1-t)d(x,\partial X)\right)^2\leq Ct(1-e^{-3r(\epsilon)})^2\leq Ct\cdot (3r(\epsilon))^2,$$
 Indeed, for the function $\omega(s)=1-e^{-3s}-3s,$  when $s\geq 0$,  $\omega'(s)=3(e^{-3s}-1)\leq 0$. Then for any $s\geq 0$, we have  $\omega(s)\leq 0.$ So  $(1-e^{-3r(\epsilon)})^2\leq (3r(\epsilon))^2$.

 when $3r(\epsilon)\leq t\leq e^{-3r(\epsilon)}$, we have
 $$|\nabla_{\mathbb{B}}\xi|\leq \frac{C_1}{r(\epsilon)} \  \text{ and}\  \left(t(1-t)d(x,\partial X)\right)^2\leq Ct(d(x,\partial X))^2\leq C t\cdot (3r(\epsilon))^2.$$
 Thus
$$\begin{aligned}I_5 &\leq C ||u||_{L^{\infty}(\mathbb{B})}^2 \left(\int_{\mathbb{B}\cap \{t\leq 3r(\epsilon)\}}\frac{t^2}{(3r(\epsilon))^2} \frac{dt}{t}dx+ \int_{\mathbb{B}}t \frac{dt}{t}dx  \right)\\&\leq C ||u||_{L^{\infty}(\mathbb{B})}^2 \left(\int_{X}\int_{0}^{3r(\epsilon)}\frac{t}{(3r(\epsilon))^2} dtdx+ \int_{\mathbb{B}}t \frac{dt}{t}dx  \right)\leq {C}||u||_{L^{\infty}(\mathbb{B})}^2.
\end{aligned}$$

For $I_7$, since $|\nabla d(x,\partial X)|=1$, we have  $ |\nabla_{\mathbb{B}} (t(1-t)d(x,\partial X))|\leq Ct$. So we obtain
$I_7\leq C  ||u||_{L^{\infty}(\mathbb{B})}^2. $

To sum up, we get
$$\int_{\mathbb{B}}{\left(t(1-t)d(x,\partial X)\right)}^2\left|\nabla_ \mathbb{B} {u}_{\epsilon}\right|^{2}  {\xi}^{2}\frac{dt}{t}dx \leq C .$$ Because $\xi=1$   on $\mathbb{B}_{3r(\epsilon)}\cap\{t>3r(\epsilon)\}$ by \eqref{U:eq:60},  we get\begin{equation}\label{U:eq:63}
 \int_{\mathbb{B}_{3r(\epsilon)}\cap\{t>3r(\epsilon)\}}{\left(t(1-t)d(x,\partial X)\right)}^2\left|\nabla_ \mathbb{B} {u}_{\epsilon}\right|^{2} \frac{dt}{t}dx \leq C,
 \end{equation} where $C$ is independent of $\epsilon$.

 Step 3: We  prove that
 $
||u||_{ \mathbb{H}_{2}^{1,\frac{n-2}{2}}(\mathbb{B})}\leq C,$
and  that $u$ is a weak supersolution of $\eqref{U:eq:11}$.

Firstly, we claim that for any $v\in \mathbb{L}_{2}^{\frac{n+2}{2}}(\mathbb{B}) $, $u\in \mathbb{L}_{2}^{\frac{n-2}{2}}(\mathbb{B}) $, $v(u)$ defined as follows is a bounded linear functional of $u$,
$$v(u):= \int_{\mathbb{B}} vu\frac{dt}{t}dx.$$
In fact, by  Young's inequality, we get $$\begin{aligned}
 |v(u)|\leq &\int_{\mathbb{B}} \left|{\left(t(1-t)d(x,\partial X)\right)} {\left(t(1-t)d(x,\partial X)\right)}^{-1}u v\right|\frac{dt}{t}dx
 \\ \leq &\left(\int_{\mathbb{B}} \left|{\left(t(1-t)d(x,\partial X)\right)}^{\frac{n}{2}-\frac{n-2}{2}} u \right|^2\frac{dt}{t}dx\right)^{\frac{1}{2}} \cdot \left(\int_{\mathbb{B}} \left|{\left(t(1-t)d(x,\partial X)\right)}^{\frac{n}{2}-\frac{n+2}{2}} v\right|^2\frac{dt}{t}dx\right)^{\frac{1}{2}}\\ =&
 ||u||_{ \mathbb{L}_{2}^{\frac{n-2}{2}}(\mathbb{B})}||v||_{ \mathbb{L}_{2}^{\frac{n+2}{2}}(\mathbb{B})}
 \end{aligned}
$$
and thus the claim is valid.

 By \eqref{U:eq:63}, we get that  $|\nabla_ \mathbb{B} {u}_{\epsilon}|$ is uniformly bounded in $\mathbb{L}_{2}^{\frac{n-2}{2}}(\mathbb{B}_{3r(\epsilon)}\cap\{t>3r(\epsilon)\})$.  Define
\begin{equation}
    \widetilde{\nabla_ \mathbb{B} {u}_{\epsilon}} =\begin{cases}
\nabla_ \mathbb{B} {u}_{\epsilon} \ \ \ & (t,x)\in \mathbb{B}_{3r(\epsilon)}\cap\{t>3r(\epsilon),\\
0  \ & (t,x)\in \mathbb B\backslash\left(\mathbb{B}_{3r(\epsilon)}\cap\{t>3r(\epsilon)\right).
\end{cases}
\end{equation}
Then   $  ||\widetilde{\nabla_ \mathbb{B} {u}_{\epsilon}}||_{\mathbb{L}_{2}^{\frac{n-2}{2}}(\mathbb{B})}\leq ||\nabla_ \mathbb{B} {u}_{\epsilon}||_{\mathbb{L}_{2}^{\frac{n-2}{2}}(\mathbb{B}_{3r(\epsilon)}\cap\{t>3r(\epsilon)\})}$.
  It implies that there is a subsequence $\widetilde{\nabla_ \mathbb{B} {u}_{\epsilon_{k}}}$ such that
  \begin{equation}\label{U:eq:64}(\widetilde{\nabla_ \mathbb{B} {u}_{\epsilon_{k}}})_{i}\to g_i(t,x),\ \ \ i=1,2,\cdots, n \end{equation}  weakly in $\mathbb{L}_{2}^{\frac{n-2}{2}}(\mathbb{B})$.
  For all $\psi\in\mathcal{C}_{0}^{\infty}(\mathbb{B})$, when $\epsilon_{k}$ is small enough,  $\text{supp} {\psi}\Subset \mathbb{B}_{3r(\epsilon)}\cap\{t>3r(\epsilon)\}$.
  Since ${u}_{\epsilon}$ converges pointwise to $u$, for all $\psi\in\mathcal{C}_{0}^{\infty}(\mathbb{B})$, we have
$$ \begin{aligned}
\int_{\mathbb{B}}g_i(t,x){\psi}\frac{dt}{t}dx=&\lim _{\epsilon_{k}\rightarrow 0}\int_{\mathbb{B}}(\widetilde{\nabla_ \mathbb{B} {u}_{\epsilon_{k}}} )_i \psi\frac{dt}{t}dx
=\lim _{\epsilon_{k}\rightarrow 0}\int_{\mathbb{B}}(\nabla_ \mathbb{B} {u}_{\epsilon_{k}} )_i \psi\frac{dt}{t}dx\\=& -\lim _{\epsilon_{k}\rightarrow 0}\int_{\mathbb{B}}{u}_{\epsilon_{k}} (\nabla_ \mathbb{B} \psi)_i\frac{dt}{t}dx=-\int_{\mathbb{B}} {u} (\nabla_ \mathbb{B} \psi)_i\frac{dt}{t}dx.
\end{aligned}
$$
 Hence we derive $\nabla_ \mathbb{B} u=g(t,x)=\left(g_1(t,x),\cdots, g_n(t,x)\right) $, which yields
\begin{equation}\label{U:eq:62}
||u||_{ \mathbb{H}_{2}^{1,\frac{n-2}{2}}(\mathbb{B})}< +\infty,\end{equation}

By \eqref{U:eq:88},  \eqref{U:1.99}  and \eqref{U:eq:64}, for any non-negative $\psi\in \mathcal{C}_{0}^{\infty}(\mathbb{B}),$
the pointwise convergence of ${u}_{\epsilon_{k}}$ to $u$,  together with the continuity of $f$ and $h$, gives that
$$ \begin{aligned} \int_{\mathbb{B}}\nabla_ \mathbb{B} {u}\cdot \nabla_ \mathbb{B}\psi \frac{dt}{t}dx &=  \lim _{\epsilon_{k}\rightarrow 0}\int_{\mathbb{B}}\widetilde{\nabla_ \mathbb{B} {u}_{\epsilon_{k}}}\cdot \nabla_ \mathbb{B}\psi \frac{dt}{t}dx= \lim _{\epsilon_{k}\rightarrow 0}\int_{\mathbb{B}}\nabla_ \mathbb{B} {u}_{\epsilon_{k}}\cdot \nabla_ \mathbb{B}\psi \frac{dt}{t}dx \\ &\geq  \lim _{\epsilon_{k}\rightarrow 0} \int_{\mathbb{B}} \left(-\left(t^2f(t,x) -t^2h(u_{\epsilon_{k}})\right)_{\epsilon_{k}}+(n-2)(t\partial_t {u}_{\epsilon_{k}})\right)\psi\frac{dt}{t}dx \\& =  \int_{\mathbb{B}} \left({t}^2h(u)- t^2f(t,x)+(n-2)(t\partial_t {u})\right)\psi\frac{dt}{t}dx.\end{aligned}
$$
 which yields that  $u$ is a weak supersolution for $\eqref{U:eq:11}$ under the definition in Lemma \ref{U:D3}.

 \end{proof}

\begin{proof}[\textbf{Proof of Proposition \ref{U:Theorem 1.6}}]By Proposition \ref{U:Theorem 1.5}, we see that $u$ is a weak supersolution to \eqref{U:eq:11}.
  Since $u$ is also a viscosity subsolution to \eqref{U:eq:11},  $-u$ is a viscosity supersolution to \eqref{U:eq:11} with $-f$ instead of $f$ and $-h(-\cdot)$ instead of $h(\cdot)$
  .  Proposition \ref{U:Theorem 1.5} implies that  $-u$ is  a weak supersolution to \eqref{U:eq:11} with $-f$ instead of $f$ and $-h(-\cdot)$ instead of $h(\cdot)$. This implies that $u$ is also a weak subsolution to \eqref{U:eq:11}.
 \end{proof}

 \begin{proof}[\textbf{Proof of Theorem \ref{U:Theorem1.7}}]
 By Theorem \ref{U:B}, when $f \in C(\overline{\mathbb{B}})\cap L^{\infty}(\mathbb{B})$, there exists  a  viscosity solution $u$ for \eqref{U:H8}, and $u$ satisfies \eqref{U:T1.10}.   Then  by  Proposition \ref{U:Theorem 1.6}, we get that $u$ is also a weak solution of \eqref{U:H8} and $u\in\mathbb{H}_{2}^{1,\frac{n-2}{2}}(\mathbb{B})$.
\end{proof}
Next, we prove the Theorem \ref{U:Theorem1.9}, which states that any continuous weak solution of \eqref{U:eq:11} is also a viscosity solution of \eqref{U:eq:11}.

\begin{lemma}\label{lemma3.2}
Let $\Omega$ be an open subset of $\mathbb{B}$,
 if $ u_i \to u $ in $ \mathbb{L}_p^\gamma(\Omega) $, then there is a subsequence $ \{u_{i_j}\}_{j=1}^\infty $ that converges to $ u $ almost everywhere on $\Omega $.
\end{lemma}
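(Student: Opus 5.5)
The plan is to reduce the statement to the classical Riesz--Fischer fact that $L^p$ convergence on a measure space implies a.e. convergence of a subsequence. The key observation is that the $\mathbb{L}_p^\gamma(\Omega)$ norm is just an ordinary $L^p$ norm of a weighted function with respect to the measure $d\mu=\frac{dt}{t}dx$. To make this precise, set $w(t,x):=\left(t(1-t)d(x,\partial X)\right)^{\frac{n}{p}-\gamma}$. This weight is continuous and strictly positive on $\mathbb{B}$, since $0<t<1$ and $d(x,\partial X)>0$ for $x\in X$. By Definition \ref{U:def-1},
$$\|u_i-u\|_{\mathbb{L}_p^\gamma(\Omega)}=\|w(u_i-u)\|_{L^p(\Omega,d\mu)}.$$
Consequently, $u_i\to u$ in $\mathbb{L}_p^\gamma(\Omega)$ means exactly that $wu_i\to wu$ in $L^p(\Omega,d\mu)$.

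Next, I would apply the standard result. From $L^p(\Omega,d\mu)$ convergence, choose a subsequence $u_{i_j}$ with $\|w(u_{i_j}-u)\|_{L^p(d\mu)}\le 2^{-j}$. Then $\sum_j|w(u_{i_j}-u)|$ lies in $L^p(d\mu)$ by Minkowski's inequality and monotone convergence. This sum is therefore finite $\mu$-a.e., so $w(u_{i_j}-u)\to0$ $\mu$-a.e. in $\Omega$. Since $w>0$ everywhere on $\Omega$, we may divide by $w$ and conclude $u_{i_j}\to u$ $\mu$-a.e.

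Finally, I would translate back to Lebesgue measure. The measure $\mu$ has density $1/t$ with respect to $dt\,dx$, and this density is positive and finite on $\Omega\subset\mathbb{B}$, so $\mu$ and Lebesgue measure have the same null sets there. Hence the convergence holds almost everywhere in the usual sense.

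There is no real obstacle here. The only points needing care are that the weight stays strictly positive and the measure $\frac{dt}{t}dx$ is equivalent to Lebesgue measure on $\Omega$, both of which hold because $\Omega$ lies inside the open set $\mathbb{B}$. It is also worth noting that the $u_i$ are, a priori, distributions; membership in $\mathbb{L}_p^\gamma$ makes them measurable functions, so pointwise statements make sense.
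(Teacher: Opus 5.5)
Your proof is correct and follows the paper's argument. In both, the weighted norm is rewritten as an ordinary $L^p$ norm of the weighted function, the standard fact that $L^p$ convergence gives an a.e.-convergent subsequence is applied, and the result is divided by the strictly positive weight. The only difference is cosmetic: the paper absorbs the density $1/t$ into the weight as $t^{-1/p}$ and works directly with Lebesgue measure, while you keep the measure $\frac{dt}{t}dx$ and then note it has the same null sets as Lebesgue measure on $\Omega$.
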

\begin{proof}
By the Definition
 \ref{U:def-1}, we have
 $$\big(\int_{\Omega}
{\left(t(1-t)d(x,\partial X)\right)}^{n-p\gamma}|u_i(t,x)-u(t,x)|^p \frac{dt}{t}dx\big)^{\frac{1}{p}}\to 0 ,$$ that is
$$
{\left(t(1-t)d(x,\partial X)\right)}^{\frac{n}{p}-\gamma}t^{-\frac{1}{p}}u_i(t,x)\to{\left(t(1-t)d(x,\partial X)\right)}^{\frac{n}{p}-\gamma}t^{-\frac{1}{p}}u(t,x) $$ in $L^{p}(\Omega)$,
then here is a subsequence $ \{{{\left(t(1-t)d(x,\partial X)\right)}^{\frac{n}{p}-\gamma}t^{-\frac{1}{p}}u_{i_j}}\}_{j=1}^\infty $ that converges to ${\left(t(1-t)d(x,\partial X)\right)}^{\frac{n}{p}-\gamma}t^{-\frac{1}{p}}u(t,x)$ almost everywhere on $\Omega $. Then $\{u_{i_j}\}_{j=1}^\infty $converges to $u(t,x)$ almost everywhere on $\Omega $.

\end{proof}

\begin{lemma}[Comparison Principle]\label{U:lemma6.6}
Let $\Omega \Subset\mathbb{B}$, $n\geq 3$, and $u_1,u_2\in \mathbb{H}^{1,\gamma}_2(\Omega)\cap L^{\infty}(\Omega)$ are respectively weak subsolution and weak supersolution  of \eqref{U:eq:11}. If $u_1\leq u_2$ on $\partial \Omega$,  then $u_1\leq u_2$ in $\Omega$. Here $u_1\leq u_2$ on $\partial \Omega$  is  $(u_1- u_2)^{+}\in \mathbb{H}^{1,\gamma}_{2,0}(\Omega).$
\end{lemma}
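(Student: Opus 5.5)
The plan is the standard energy argument: test the difference of the two weak inequalities with $w:=(u_1-u_2)^+$ and show that $\int_\Omega|\nabla_\mathbb{B} w|^2\frac{dt}{t}dx\le 0$. I will work with the equivalent weak formulation of Lemma \ref{U:D3}. Since $\Omega\Subset\mathbb{B}$, both $t$ and $t^{-1}$ are bounded on $\overline\Omega$. By Lemma \ref{U:lemma5.7}, $\mathbb{H}^{1,\gamma}_2(\Omega)$ and $H^1(\Omega)$ therefore have equivalent norms there, so $w\in\mathbb{H}^{1,\gamma}_{2,0}(\Omega)$ can be treated as an element of $H^1_0(\Omega)$ with $w\ge 0$. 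Moreover, $u_1,u_2\in L^\infty(\Omega)$ and $h$ is continuous, so $h(u_1)$ and $h(u_2)$ are bounded on $\Omega$. Hence every term in \eqref{U:D2.3} is a continuous linear functional of $\psi$ in the $H^1$ topology on $\Omega$.

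\textbf{Step 1 (admissible test function).} As in Step 1 of the proof of Proposition \ref{U:Theorem 1.5}, take nonnegative $\psi_j\in C_0^\infty(\Omega)$ with $\psi_j\to w$ in $H^1(\Omega)$, and hence in $\mathbb{H}^{1,\gamma}_2(\Omega)$ by \eqref{U:eq:57}. Apply \eqref{U:D2.3} for $u_1$ (the $\le$ inequality) and for $u_2$ (the $\ge$ inequality) with $\psi=\psi_j$, and subtract. Passing to the limit $j\to\infty$, justified by Cauchy--Schwarz together with the boundedness just noted, gives
$$\int_\Omega \nabla_\mathbb{B}(u_1-u_2)\cdot\nabla_\mathbb{B}w\,\frac{dt}{t}dx\le \int_\Omega t^2\big(h(u_1)-h(u_2)\big)w\,\frac{dt}{t}dx+(n-2)\int_\Omega \big(t\partial_t(u_1-u_2)\big)w\,\frac{dt}{t}dx .$$

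\textbf{Step 2 (sign of each term).} On $\{w>0\}$ we have $\nabla_\mathbb{B}(u_1-u_2)=\nabla_\mathbb{B}w$ a.e., and $\nabla_\mathbb{B}w=0$ a.e. on $\{w=0\}$. So the left side equals $\int_\Omega|\nabla_\mathbb{B}w|^2\frac{dt}{t}dx$, and in the drift term $u_1-u_2$ may likewise be replaced by $w$.

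On $\{w>0\}$ we also have $u_1>u_2$. Since $h$ is non-increasing, $h(u_1)\le h(u_2)$ there, so the $h$-term is $\le 0$.

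For the drift term, pass to the coordinates $a=\ln t$ used in Remark \ref{U:remark3.2}. There $t\partial_t=\partial_a$ and $\frac{dt}{t}dx=da\,dx$, so
$$\int_\Omega (t\partial_t w)\,w\,\frac{dt}{t}dx=\frac12\int_{\overset{T}{}\Omega}\partial_a(w^2)\,da\,dx=0 .$$
This vanishes because $w\in H^1_0$ of the bounded set $\overset{T}{}\Omega$. To see it, approximate $w$ by the $\psi_j$ and use $\int\partial_a(\psi_j^2)=0$, which holds since each $\psi_j$ has compact support.

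\textbf{Step 3 (conclusion).} Combining Steps 1 and 2 gives $\int_\Omega|\nabla_\mathbb{B}w|^2\frac{dt}{t}dx\le 0$, so $\nabla_\mathbb{B}w=0$ a.e. In the $(a,x)$ variables this says $w\in H^1_0(\overset{T}{}\Omega)$ has zero gradient. The Poincaré inequality on the bounded set $\overset{T}{}\Omega$ then yields $w=0$ a.e., that is, $u_1\le u_2$ in $\Omega$.

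The step I expect to need the most care is Step 1. The point is that \eqref{U:D2.3} is only assumed for smooth compactly supported nonnegative test functions, and extending it to $w$ requires all terms to be $H^1$-continuous in $\psi$. This is exactly where $\Omega\Subset\mathbb{B}$, needed for the norm equivalence of Lemma \ref{U:lemma5.7}, and the $L^\infty$ bounds, needed to control $h(u_i)$, are used. The vanishing of the drift term is the only structural point beyond the classical argument, and the logarithmic change of variables handles it.
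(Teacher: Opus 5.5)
Your proof is correct, and it takes a different route from the paper.

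The paper runs the classical weak-maximum-principle argument. It sets $M=\|(u_1-u_2)^+\|_{L^\infty(\Omega)}$, assumes $M>0$, and tests with $\omega_\varepsilon^+=(u_1-u_2-M+\varepsilon)^+$. It drops the $h$-term by monotonicity, as you do. It then treats the drift term $(n-2)\int_\Omega (t\partial_t\omega_\varepsilon)\,\omega_\varepsilon^+\frac{dt}{t}dx$ only as a perturbation. Young's inequality absorbs it into the left side, at the cost of a term $C\int_{\Omega\cap\{|\nabla_\mathbb{B}\omega_\varepsilon^+|>0\}}|\omega_\varepsilon^+|^2\frac{dt}{t}dx$. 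The Sobolev inequality (this is where $n\ge 3$ enters) and H\"older then bound $\|\omega_\varepsilon^+\|^2_{L^{2n/(n-2)}}$ by $C\,|\{\nabla_\mathbb{B}\omega_\varepsilon^+\neq 0\}|^{2/n}\,\|\omega_\varepsilon^+\|^2_{L^{2n/(n-2)}}$. Finally the paper argues that this measure factor becomes small as $\varepsilon\to 0$, which forces $u_1-u_2\le M-\varepsilon_0$ and contradicts the definition of $M$.

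You instead use that, in the variables $a=\ln t$, the first-order term is the constant vector field $(n-2)\partial_a$ relative to the measure $da\,dx$. Hence $\int_\Omega (t\partial_t w)\,w\,\frac{dt}{t}dx=\frac12\int\partial_a(w^2)\,da\,dx=0$ for $w\in H^1_0$. The drift cancels exactly, and the energy inequality plus Poincar\'e finish the proof.

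What your route buys is brevity. It needs neither the $\varepsilon$-shift nor the somewhat delicate measure-of-support step, and it never uses $n\ge 3$. What the paper's route buys is generality: it exploits no structure of the first-order coefficient. It would go through unchanged for a general bounded drift $b(t,x)\cdot\nabla_\mathbb{B}u$. Your cancellation would fail there unless $b$, written in the $(a,x)$ variables, has nonnegative divergence, since $\int (b\cdot\nabla w)\,w=-\frac12\int(\mathrm{div}\,b)\,w^2$.
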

\begin{proof}Define $M=||(u_1-u_2)^{+}||_{L^{\infty}(\Omega)}$. If $M=0$, we get $u_1-u_2\leq 0$ in $\Omega$. Next, we will discuss the case where $M>0$ and prove that this case does not hold.
Sine $u_1,u_2\in \mathbb{H}^{1,\gamma}_2(\Omega)\cap L^{\infty}(\Omega)$ are respectively weak subsolution and weak supersolution   of \eqref{U:eq:11}, we have
\begin{equation}\label{U:eq:18}\int \limits_{\Omega} \nabla_{\mathbb{B}} u_1\cdot \nabla_{\mathbb{B}}\psi \frac{dt}{t}dx\leq  \int \limits_{\Omega} (n-2)\left(t\partial_t u_1\right) \psi+ t^2{h}(u_1)  \psi-t^2f(t,x) \psi \frac{dt}{t}dx, \end{equation}
\begin{equation}\label{U:eq:19}\int \limits_{\Omega} \nabla_{\mathbb{B}} u_2\cdot \nabla_{\mathbb{B}}\psi \frac{dt}{t}dx\geq  \int \limits_{\Omega} (n-2)\left(t\partial_t u_2\right) \psi+ t^2{h}(u_2)  \psi-t^2f(t,x) \psi \frac{dt}{t}dx\end{equation}
for all non-negative  $\psi \in C^{\infty}_{0}(\Omega)$. For $\varepsilon\leq M$,  define
\begin{equation}\label{U:eq:12}u_\varepsilon=u_1-M+\varepsilon, \ \ \ \  \omega_{\varepsilon}=u_\varepsilon-u_2,  \ \  \omega_{\varepsilon}^{+}=\max\{\omega_{\varepsilon},0\}.\end{equation}
Subtracting the two inequalities \eqref{U:eq:18} and   \eqref{U:eq:19}  yields
$$
\int \limits_{\Omega} (\nabla_{\mathbb{B}} u_1-\nabla_{\mathbb{B}} u_2)\cdot \nabla_{\mathbb{B}} \psi \frac{dt}{t}dx\leq  \int \limits_{\Omega} (n-2)\left(t\partial_t u_1-t\partial_t u_2\right) \psi+ (t^2{h}(u_{1})-t^2{h}(u_2))  \psi \frac{dt}{t}dx$$
for all non-negative  $\psi \in C^{\infty}_{0}(\Omega)$.
 Since  $(u_1- u_2)^{+}\in \mathbb{H}^{1,\gamma}_{2,0}(\Omega),$
 by  \eqref{U:eq:16}, \eqref{U:eq:17} and Lemma \ref{U:lemma5.7}, it follows that
 $(u_1- u_2)^{+}\in H^{1}_{0}(\Omega).$
By  \eqref{U:eq:12}, we have $\omega_{\varepsilon}^{+}\in H^{1}_{0}(\Omega).$
Analogously,  by  Lemma \ref{U:lemma5.7},  $\omega_{\varepsilon}^{+}\in \mathbb{H}^{1,\gamma}_{2,0}(\Omega).$  It implies that there exists a  sequence of  $\{\psi_{j}\}$ such that $ \psi_{j}\in C_{0}^{\infty}(\Omega)$  and $\psi_{j}\to \omega_{\varepsilon}^{+} $ in $\mathbb{H}^{1,\gamma}_{2,0}(\Omega)$. Then as $j\to +\infty$,
$$\begin{aligned}
&\big|\int \limits_{\Omega} (\nabla_{\mathbb{B}} u_1-\nabla_{\mathbb{B}} u_2)\cdot (\nabla_{\mathbb{B}} \omega_{\varepsilon}^{+}- \nabla_{\mathbb{B}}\psi_j )\frac{dt}{t}dx\big|\\ \leq &\left( \int \limits_{\Omega} |{\left(t(1-t)d(x,\partial X)\right)}^{\frac{n}{2}-\gamma}(\nabla_{\mathbb{B}} \omega_{\varepsilon}^{+}- \nabla_{\mathbb{B}}\psi_j)|^2 \frac{dt}{t}dx\right)^{\frac{1}{2}}\cdot \left( \int \limits_{\Omega} |{\left(t(1-t)d(x,\partial X)\right)}^{\gamma-\frac{n}{2}}\nabla_{\mathbb{B}}(u_1-u_2)|^2 \frac{dt}{t}dx\right)^{\frac{1}{2}}\\ \leq & \mathop{\sup }\limits_{\Omega}{\left(t(1-t)d(x,\partial X)\right)}^{2\gamma-n}\cdot ||u_1-u_2 ||_{\mathbb{H}^{1,\gamma}_{2,0}(\Omega)}\cdot ||\omega_{\varepsilon}^{+}-\psi_{j} ||_{\mathbb{H}^{1,\gamma}_{2,0}(\Omega)}\to 0
\end{aligned}$$
  Thus, when $j\to \infty $,
   \begin{equation}\label{eq:17}\int \limits_{\Omega} (\nabla_{\mathbb{B}} u_1-\nabla_{\mathbb{B}} u_2)\cdot \nabla_{\mathbb{B}}\psi_j \frac{dt}{t}dx\to \int \limits_{\Omega} (\nabla_{\mathbb{B}} u_1-\nabla_{\mathbb{B}} u_2)\cdot \nabla_{\mathbb{B}} \omega_{\varepsilon}^{+} \frac{dt}{t}dx\end{equation}
Similarly, since $u_1, u_2\in L^{\infty}(\Omega)$, and $h, f$ are continuous,  we have
\begin{equation}\label{eq:18}\int \limits_{\Omega} (n-2)\left(t\partial_t u_1-t\partial_t u_2\right) \psi_j \frac{dt}{t}dx\to \int \limits_{\Omega} (n-2)\left(t\partial_t u_1-t\partial_t u_2\right) \omega_{\varepsilon}^{+} \frac{dt}{t}dx,\end{equation}
\begin{equation}\label{eq:19}\int \limits_{\Omega}  t^2{h}(u_1)  \psi_{j}-t^2f(t,x) \psi_{j} \frac{dt}{t}dx\to \int \limits_{\Omega}  t^2{h}(u_1)  \omega_{\varepsilon}^{+}-t^2f(t,x) \omega_{\varepsilon}^{+}\frac{dt}{t}dx.\end{equation}
By \eqref{eq:17}, \eqref{eq:18} and  \eqref{eq:19}, we  obtain
$$ \int \limits_{\Omega} |\nabla_{\mathbb{B}} \omega_{\varepsilon}^{+}|^{2} \frac{dt}{t}dx\leq  \int \limits_{\Omega} (n-2)\left(t\partial_t \omega_\varepsilon\right)  \omega_\varepsilon^{+}+ (t^2{h}(u_{1})-t^2{h}(u_2))\omega_\varepsilon^{+} \frac{dt}{t}dx.$$
When $\omega_\varepsilon^{+}> 0$, $u_{1}\geq u_2$. And then ${h}(u_{1})\leq {h}(u_2)$.  So we have $$ \int \limits_{\Omega} |\nabla_{\mathbb{B}} \omega_{\varepsilon}^{+}|^{2} \frac{dt}{t}dx\leq  \int \limits_{\Omega} (n-2)\left(t\partial_t \omega_\varepsilon\right)  \omega_\varepsilon^{+}  \frac{dt}{t}dx.$$
By Young's inequality, we obtain
 \begin{equation}\label{eq:20}\int \limits_{\Omega} |\nabla_{\mathbb{B}} \omega_{\varepsilon}^{+}|^{2}\frac{dt}{t}dx\leq   C \int \limits_{\Omega\cap\{|\nabla_{\mathbb{B}}\omega_{\varepsilon}^{+}|>0\}}  |\omega_\varepsilon^{+}|^{2} \frac{dt}{t}dx.\end{equation}
Since $\omega_{\varepsilon}^{+}\in H^{1}_{0}(\Omega)$ and $|\nabla_{\mathbb{B}}\omega_{\varepsilon}^{+}|\geq t |\nabla \omega_{\varepsilon}^{+}|$, by applying Sobolev inequality and H\"older's inequality,  we derive
\begin{equation}\label{U:eq:81}
\begin{aligned}
&\left(\int \limits_{\Omega} | \omega_{\varepsilon}^{+}|^{\frac{2n}{n-2}} \frac{dt}{t}dx \right)^{\frac{n-2}{n}}\leq \sup\limits_{\Omega} (t^{-{\frac{n-2}{n}}}) \left(\int \limits_{\Omega} | \omega_{\varepsilon}^{+}|^{\frac{2n}{n-2}} dtdx \right)^{\frac{n-2}{n}}\leq C  \int \limits_{\Omega} |\nabla \omega_{\varepsilon}^{+}|^{2} dtdx \\ \leq & C  \sup\limits_{\Omega} (t^{-1}) \int \limits_{\Omega} |\nabla_{\mathbb{B}} \omega_{\varepsilon}^{+}|^{2} \frac{dt}{t}dx
\leq  C \int \limits_{\Omega\cap\{|\nabla_{\mathbb{B}}\omega_{\varepsilon}^{+}|>0\}}  |\omega_\varepsilon^{+}|^{2} \frac{dt}{t}dx \\ \leq & C \left(\int \limits_{\Omega\cap\{|\nabla_{\mathbb{B}}\omega_{\varepsilon}^{+}|>0\}}  1 \frac{dt}{t}dx\right)^{\frac{2}{n}}\cdot\left( \int \limits_{\Omega}  |\omega_\varepsilon^{+}|^{\frac{2n}{n-2}} \frac{dt}{t}dx\right)^{\frac{n-2}{n}}.
\end{aligned}\end{equation}
Through the  construction of $\omega_{\varepsilon}^{+}$, see \eqref{U:eq:12}, we can obtain $|\omega_{\varepsilon}^{+}|\leq \varepsilon$.
 By \eqref{eq:20}, we have
$$\int \limits_{\Omega} |\nabla_{\mathbb{B}} \omega_{\varepsilon}^{+}|^{2}\frac{dt}{t}dx\to 0, \ \text{as} \ \varepsilon\to 0.$$
Lemma \ref{lemma3.2}  implies that there exists a subsequence of  $\nabla_{\mathbb{B}} \omega_\varepsilon^{+}$, which we still denote itself, converging  to 0  a.e. in $\Omega$.
Since $$\nabla_{\mathbb{B}} \omega_\varepsilon^{+}
    =\begin{cases}
\nabla_{\mathbb{B}}(u_1-u_2)\ \ \ & \text{when} \ \omega_\varepsilon^{+}=\omega_\varepsilon \ \text{and} \ \nabla_{\mathbb{B}}(u_1-u_2)\neq 0 ,\\
0  \ & other,
\end{cases}
$$
as $\varepsilon\to 0$,  if  $\displaystyle \int \limits_{\Omega\cap\{|\nabla_{\mathbb{B}}\omega_{\varepsilon}^{+}|>0\}} 1 \frac{dt}{t}dx \nrightarrow 0$,  this contradicts that $\nabla_{\mathbb{B}} \omega_\varepsilon^{+}$ converges  to 0  a.e. in $\Omega$. So  there exists a $\varepsilon_0$ such that for any $\varepsilon\leq\varepsilon_0 $,  $$\displaystyle C\int \limits_{\Omega\cap\{|\nabla_{\mathbb{B}}\omega_{\varepsilon}^{+}|>0\}} 1 \frac{dt}{t}dx \leq \frac{1}{2}.$$
 Choosing $\varepsilon=\varepsilon_0$, by \eqref{U:eq:81},
 we  deduce that
 $$ \int \limits_{\Omega} | \omega_{\varepsilon_{0}}^{+}|^{\frac{2n}{n-2}} \frac{dt}{t}dx =0 .$$
 Then $\omega_{\varepsilon_{0}}^{+}=0$, that is, $u_1-u_2\leq M-\varepsilon_{0}$  a.e. in $\Omega$.  This property contracts with the definition of $M$. Then $M=0$ and the proof is compete.

\end{proof}

\begin{proof}[\textbf{Proof of Proposition \ref{U:Theorem1.8}}]
Let $u\in \mathbb{H}_{2}^{1,\gamma}(\mathbb{B})\cap C(\mathbb{B})$ is  a weak supersolution of
     \eqref{U:eq:11}, if $u$ is not viscosity supersolution, then there exists  $(t_0,x_0)$ and $\phi\in C^2(\mathbb{B})$ such that $u-\phi$ reach a local minimum at $(t_0,x_0)$, and
     $$\left(tr(\nabla^2_\mathbb{B}\phi)
     +(n-2)(t\partial_t \phi)\right)\big|_{(t_0,x_0)}+ t_0^2h(u(t_0,x_0))>t_0^2f(t_0,x_0) .$$
     We can choose $\varphi=\phi+(u-\phi)\big|_{(t_0,x_0)}-|(t,x)-(t_0,x_0)|^{4}_{\mathbb{B}}$ such that $u-\varphi$ reach a strict minimum value of 0 at $(t_0,x_0)$, and
     $$\left(tr(\nabla^2_\mathbb{B}\varphi)
     +(n-2)(t\partial_t \varphi)\right)\big|_{(t_0,x_0)} + t_0^2h(u(t_0,x_0))>t_0^2f(t_0,x_0) .$$
     Then there exists   a neighborhood $U$ of $(t_0,x_0)$ with  a  smooth boundary,
     such that $U\Subset  \mathbb{B}$, and  for any $(t,x)\in U$
       $$ tr(\nabla^2_\mathbb{B}\varphi)
     +(n-2)(t\partial_t \varphi) + t^2h(u(t,x))\geq t^2f(t,x) ,$$ and
   $m=\inf\limits_{\partial U} (u-\varphi)>0$. Let $\tilde{\varphi}=\varphi+m$.  Then we have
      $$tr(\nabla^2_\mathbb{B}\tilde{\varphi})
     +(n-2)(t\partial_t \tilde{\varphi}) + t^2h(u(t,x))\geq t^2f(t,x) $$ in $U$, which  means that $\tilde{\varphi}$ is a  weak subsolution of \eqref{U:eq:11}.   Since  $(\tilde{\varphi}-u)\in \mathbb{H}_{2}^{1,\gamma}(U)$, by Lemma \ref{U:lemma5.7},  we have  $(\tilde{\varphi}-u)\in H^{1}(U)$.
     Since $\tilde{\varphi}-u\leq 0$ on $\partial U$, we have $(\tilde{\varphi}-u)^{+}\in H^{1}_{0}(U).
      $ By Lemma \ref{U:lemma5.7},  $(\tilde{\varphi}-u)^{+}\in \mathbb{H}_{2,0}^{1,\gamma}(U).$   According  Lemma \ref{U:lemma6.6} we have
    $\tilde{\varphi}-u\leq 0$ in $U$, which is contradictory to $(\tilde{\varphi}-u)(t_0,x_0)=(\varphi-u)(t_0,x_0)+m=+m>0$.
 \end{proof}

\begin{proof}[\textbf{Proof of Theorem \ref{U:Theorem1.9}}]
By Proposition  \ref{U:Theorem1.8}, we have that $u$ is a viscosity supersolution of  \eqref{U:eq:11}. Since $u$ is also a weak  subsolution to \eqref{U:eq:11}, $-u$ is a weak supersolution to \eqref{U:eq:11} with $-f$ instead of $f$ and $-h(-\cdot)$ instead of $h(\cdot)$ .
 Proposition \ref{U:Theorem1.8} implies that $-u$ is  a viscosity  supersolution to \eqref{U:eq:11} with $-f$ instead of $f$ and $-h(-\cdot)$ instead of $h(\cdot)$. This implies that $u$ is also a viscosity subsolution to \eqref{U:eq:11}.
\end{proof}

\subsection*{Acknowledgments}

The authors are grateful to the referees for their careful reading and valuable comments.

\end{document}